\documentclass[a4paper,11pt]{amsart}

\usepackage{amsmath, amsthm, amsfonts, amssymb}
\usepackage[all,cmtip]{xy}

\usepackage{enumerate}
\usepackage[colorlinks=true, linkcolor=blue, citecolor=red, menucolor=black]{hyperref}

\usepackage{color} 
\usepackage{graphicx}
\usepackage{mathrsfs}

\numberwithin{equation}{section}

\newtheorem{letterthm}{Theorem}

\newtheorem{theorem}{Theorem}[section]
\newtheorem{lemma}[theorem]{Lemma}
\newtheorem{corollary}[theorem]{Corollary}
\newtheorem{proposition}[theorem]{Proposition}

\theoremstyle{definition}
\newtheorem{remark}[theorem]{Remark}

\newtheorem{example}[theorem]{Example}

\newtheorem{definition}[theorem]{Definition}

\newtheorem*{definition*}{Definition}
\newtheorem*{conjecture}{Conjecture}
\newtheorem*{definitions*}{Definitions}

\newtheorem*{claim*}{Claim}
\newtheorem*{question*}{Question}

\newcommand{\R}{\mathbb{R}}
\newcommand{\B}{\mathrm{B}}
\newcommand{\C}{\mathbb{C}}
\newcommand{\Z}{\mathbb{Z}}

\newcommand{\Q}{\mathbb{Q}}
\newcommand{\N}{\mathbb{N}}
\newcommand{\T}{\mathbb{T}}

\newcommand{\cZ}{\mathcal{Z}}
\newcommand{\cU}{\mathcal{U}}

\newcommand{\cP}{\mathcal{P}}

\newcommand{\cS}{\mathcal{S}}
\newcommand{\cT}{\mathcal{T}}

\newcommand{\Ad}{\operatorname{Ad}}
\newcommand{\id}{\text{\rm id}}
\newcommand{\ri}{\text{\rm i}}
\newcommand{\rd}{\text{\rm d}}

\newcommand{\Aut}{\operatorname{Aut}}

\newcommand{\Inn}{\operatorname{Inn}}

\newcommand{\rB}{\mathord{\text{\rm B}}}
\newcommand{\rZ}{\mathord{\text{\rm Z}}}

\newcommand{\rL}{\mathord{\text{\rm L}}}
\newcommand{\rC}{\mathord{\text{\rm C}}}
\newcommand{\rM}{\mathord{\text{\rm M}}}

\newcommand{\rE}{\mathord{\text{\rm E}}}

\newcommand{\ovt}{\mathbin{\overline{\otimes}}}
\newcommand{\bigovt}{\mathbin{\overline{\bigotimes}}}

\newcommand{\op}{\mathord{\text{\rm op}}}

\newcommand{\cR}{\mathcal{R}}

\newcommand{\I}{{\rm I}}
\newcommand{\II}{{\rm II}}
\newcommand{\III}{{\rm III}}

\allowdisplaybreaks

\makeatletter
\@namedef{subjclassname@2020}{%
  \textup{2020} Mathematics Subject Classification}
\makeatother

\begin{document}

\title[Uniqueness of almost periodic outer flows]{Uniqueness of almost periodic outer flows on the hyperfinite type $\II_1$ factor}

\begin{abstract}
We show that any almost periodic outer flow $\alpha : \R \curvearrowright R$ on the hyperfinite type $\II_1$ factor with Connes'\! spectrum $\Gamma(\alpha) = \R$ satisfies the Rokhlin property and thus is unique up to cocycle conjugacy. The proof relies on a key cocycle perturbation result for type $\III$ amenable equivalence relations. As a byproduct of our methods, we also show that every almost periodic factor of type $\III_1$ with separable predual has an extremal almost periodic faithful normal state.
\end{abstract}

\author{Cyril Houdayer}
\address{\'Ecole normale sup\'erieure \\ D\'epartement de math\'ematiques et applications \\ Universit\'e Paris-Saclay \\ 45 rue d'Ulm \\ 75230 Paris Cedex 05 \\ France}
\email{cyril.houdayer@ens.psl.eu}
\thanks{CH is supported by ERC Advanced Grant NET 101141693}

\author{Amine Marrakchi}
\address{CNRS \\ \'Ecole normale sup\'erieure de Lyon \\ Unit\'e de math\'ematiques pures et appliquées  \\ 46 all\'ee d'Italie \\ 69364 Lyon \\ France}
\email{amine.marrakchi@ens-lyon.fr}

\subjclass[2020]{37A20, 37A40, 46L10, 46L36, 46L55}
\keywords{Almost periodic flows; Ergodic cocycles; Hyperfinite type $\II_1$ factor; Nonsingular equivalence relations; Type $\III_1$ factors}

\maketitle

\section{Introduction and statement of the main results}

\subsection{Uniqueness of almost periodic outer flows on $R$}

By the fundamental work of Connes \cite{Co75b}, there is a unique amenable type $\II_1$ factor with separable predual: it is isomorphic to the unique hyperfinite type $\II_1$ factor $R$ of Murray--von Neumann \cite{MvN43}. Moreover, Connes \cite{Co75a} showed that there is a unique automorphism $\theta \in \Aut(R)$ up to outer conjugacy when the corresponding action $\Z \curvearrowright R$ is outer, i.e.\! $\theta^n \notin \Inn(R)$ for every $n \in \Z \setminus \{0\}$. 

A next natural step is to classify continuous flows $\R \curvearrowright R$ on the hyperfinite type $\II_1$ factor. This problem, which has been pushed forth by Takesaki since the early 1980s, is still open. In order to state the classification problem for flows $\R \curvearrowright R$ on the hyperfinite type $\II_1$ factor, we follow Kawahigashi's beautiful exposition on Popa's W$^*$-news blog \cite{Po25}.

A natural replacement of the outer conjugacy for a single automorphism is the cocycle conjugacy for a flow. However, it is not straightforward to define the right notion of ``outerness" for a flow. We should keep in mind that there is a certain similarity between the classification of flows on the hyperfinite type $\II_1$ factor up to cocycle conjugacy and the classification of amenable type $\III$ factors and their modular automorphism group. In that respect, an important invariant for the cocycle conjugacy class of a flow $\alpha : \R \curvearrowright R$ is Connes'\! spectrum $\Gamma(\alpha)$, which is a closed subgroup of $\R$. The condition $\Gamma(\alpha) = \R$ is equivalent to the factoriality of the crossed product von Neumann algebra $R \rtimes_\alpha \R$. However, unlike the analogous situation of the modular automorphism group $\sigma^\varphi$ of a type $\III_1$ factor $M$, the condition $\Gamma(\alpha) = \R$ does not necessarily imply that $\alpha$ is \emph{outer}, i.e.\! that $\alpha_t \notin \Inn(R)$ for every $t \in \R \setminus \{0\}$. A stronger assumption that one can make on $\alpha$ is \emph{strict outerness} which means that $R' \cap (R \rtimes_\alpha \R)=\C1$. If $\alpha$ is strictly outer, then $\Gamma(\alpha)=\R$ and $\alpha$ is outer. Note that for a type $\III_1$ factor $M$, the modular automorphism group $\sigma^\varphi$ is always strictly outer by Connes--Takesaki relative commutant theorem \cite{CT76}.

As in the classification of amenable type $\rm III_\lambda$ factors with $\lambda\in [0, 1)$ \cite{Co72, Co75b}, Kawahigashi \cite{Ka87} obtained a complete classification of flows $\alpha : \R \curvearrowright R$ up to stable conjugacy in the case when $\Gamma(\alpha)\neq\R$. Similar to the uniqueness of the amenable type ${\rm III_1}$ factor \cite{Co85, Ha85}, it is natural to expect that there is a unique flow $\alpha : \R \curvearrowright R$ up to cocycle conjugacy in the case when $\Gamma(\alpha) =\R$ and $\alpha$ is outer.  Kawahigashi \cite{Ka88a} settled this question affirmatively under the extra assumption that $\alpha : \R \curvearrowright R$ fixes pointwise a Cartan subalgebra $A \subset R$.

More recently, Masuda--Tomatsu \cite{MT12} developed a conceptual approach to classifying flows on von Neumann algebras up to cocycle conjugacy. Notably, they showed that a flow $\alpha : \R \curvearrowright R$ on the hyperfinite type $\II_1$ factor which satisfies the Rokhlin property is unique up to cocycle conjugacy. This enabled them to obtain alternative proofs of Kawahigashi's classification results. Following \cite{Ki95, Ka00, MT12}, we say that the flow $\alpha : \R \curvearrowright R$ satisfies the {\em Rokhlin property} if for every $p \in \R$, there exists a unitary $u_p \in \mathcal U(R_{\alpha, \omega})$ in the $(\alpha, \omega)$-equicontinuous part of the central sequence algebra such that $\alpha^\omega_t(u_p) = \exp({\rm i}tp) u_p$ for every $t \in \R$.

To state the conjecture explicitly, we let $\sigma^\infty = \bigovt_{\N}  \sigma$ be the infinite tensor product action on $R=\bigovt_{\N} \rM_n(\C)$ where $\sigma : \R \curvearrowright \rM_n(\C)$ is any faithful flow on a finite dimensional factor, for example
$$ \sigma : t \mapsto \Ad \left( \begin{array}{ccc} 1 & 0 &0 \\ 0 & \exp({\rm i}t\lambda) &0 \\ 0 & 0 & \exp({\rm i}t\mu) \end{array} \right) \in \Aut(\rM_3(\C))
$$
with $\lambda, \mu > 0$ and $\lambda/\mu \notin \Q$. The following conjecture mentioned by Kawahigashi in \cite{Po25} describes the main open problem on the classification of continuous flows on the hyperfinite type $\II_1$ factor.

\begin{conjecture}
Let $\alpha : \R \curvearrowright R$ be a flow on the hyperfinite type $\II_1$ factor. Then the following assertions are equivalent:
\begin{enumerate}
    \item $\alpha$ is cocycle conjugate to $\sigma^{\infty}$.
    \item $\alpha$ satisfies the Rokhlin property.
    \item $\alpha$ is strictly outer, i.e.\ $R' \cap (R \rtimes_\alpha \R)=\C1$.
    \item $\Gamma(\alpha) = \R$ and $\alpha$ is outer.
\end{enumerate}
\end{conjecture}
As we explained above, for any flow $\alpha : \R \curvearrowright R$, we have $(1) \Leftrightarrow (2)$ by \cite{MT12} and moreover we have $(2) \Rightarrow (3) \Rightarrow (4)$.

In this paper, we settle the above conjecture for the class of {\em almost periodic} flows $\alpha : \R \curvearrowright R$. A flow $\alpha : \R \curvearrowright R$ is almost periodic if the closure $K = \overline{\left\{\alpha_t \mid t \in \R \right\}} < \Aut(R)$ is a compact subgroup. The class of almost periodic flows $\alpha : \R \curvearrowright R$ is quite large and contains all flows arising from higher dimensional simple noncommutative tori \cite{Ki95, Ka88b}. 

\begin{letterthm} \label{main almost periodic flow}
Let $\alpha : \R \curvearrowright R$ be an almost periodic flow such that $\Gamma(\alpha) = \R$ and $\alpha$ is outer. Then $\alpha$ has the Rokhlin property, hence it is cocycle conjugate to $\sigma^\infty$.
 \end{letterthm}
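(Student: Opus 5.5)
Since $\alpha$ is almost periodic, $K=\overline{\{\alpha_t\}}$ is a compact abelian group and $\alpha$ factors through a continuous homomorphism $\R\to K$ with dense image. The plan is to reduce the Rokhlin property to a statement about the action $K\curvearrowright R$ and its fixed-point algebra, and then to a cocycle problem for amenable type $\III$ equivalence relations. By a Kovács--Szűcs-type argument, $\tau$ is the unique $K$-invariant trace, so $R^K$ is finite. Concretely, we write $R=\overline{\bigoplus_{\chi\in\widehat K}R_\chi}$ as the closure of its spectral subspaces. We identify $\widehat K$ with a countable subgroup $\Lambda\subset\R$ through the dense embedding $\R\to K$, so that $\alpha_t(x)=e^{\ri t\lambda}x$ for $x\in R_\lambda$. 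The Rokhlin property then asks for unitaries in $R_\omega$ that are asymptotically in $R_p$, and approximately so for $p\notin\Lambda$. Because the Rokhlin property passes to limits of $p$ in the closed set of admissible frequencies, it suffices to produce, for each $\lambda$ in a set generating a dense subgroup of $\R$ (the hypothesis $\Gamma(\alpha)=\R$ should supply this), central sequences of unitaries in $R_\lambda$.

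The key step is to model the situation by an equivalence relation. The hypotheses $\Gamma(\alpha)=\R$ together with outerness should give that $R^K\subset R$ is irreducible and that $R\rtimes_\alpha\R$ is a factor. Compact abelian group duality then makes $R\rtimes K$ a type $\II_\infty$ or $\III$ object. For almost periodic flows one expects $R\rtimes_\alpha\R$ to be an amenable type $\III_1$ factor $M$ carrying an almost periodic weight whose modular group is the dual of $\alpha$ and whose centralizer is related to $R^K$. Using the amenability of $M$ and Connes--Feldman--Weiss, I would choose a Cartan subalgebra $A\subset R^K$ that is regular in $R$ and normalized by homogeneous unitaries, that is, unitaries in the spectral subspaces $R_\lambda$. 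The existence of such a Cartan subalgebra is where I would invoke the paper's result producing an extremal almost periodic state. This presents $\alpha$ as coming from an amenable ergodic type $\III$ equivalence relation $\cR$ together with a cocycle $c:\cR\to\Lambda\subset\R$ recording the eigenvalue, so that $R^K$ corresponds to the kernel of $c$. Outerness and $\Gamma(\alpha)=\R$ translate into ergodicity properties of $c$ (the skew product is ergodic) and into the absence of coboundary pieces.

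Next I would apply the key cocycle perturbation result announced in the abstract. The aim is to perturb $c$ within its cohomology class to a cocycle $c'$ such that the pair $(\cR,c')$ is isomorphic to a product $(\cR_0\times\cS,\;0\oplus d)$, where $\cS$ is a hyperfinite relation carrying a product-type "odometer" cocycle $d$. This is the equivalence-relation analogue of absorbing $\sigma^\infty$. Once $c$ is cohomologous to such a split cocycle, the corresponding flow is cocycle conjugate to $\beta\otimes\sigma^\infty$ for some flow $\beta$. The tensor factor $\sigma^\infty$ has the Rokhlin property directly: its tail eigenunitaries give central sequences in every spectral subspace $R_\lambda$ with $\lambda$ in a dense subgroup. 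The Rokhlin property is invariant under cocycle conjugacy and tensoring, so $\alpha$ has it, and \cite{MT12} then yields cocycle conjugacy to $\sigma^\infty$.

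I expect the main obstacle to be the cocycle perturbation itself. One has to show that an ergodic real-valued cocycle on an amenable type $\III$ relation with dense essential range is cohomologous to one that splits off a tensor factor. The approach I would try first is a Krieger/Connes--Woods-style tower construction. Using ergodicity of the skew product, one builds finite subrelations on which $c$ is cohomologous to a cocycle that is constant on blocks with prescribed values, with arbitrarily small error. One then glues these inductively with summable errors to obtain the product decomposition. The type $\III$ nature of $\cR$, with no invariant measure, is what makes the block constructions delicate.
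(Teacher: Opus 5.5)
There is a genuine gap. Your reduction rests on structural claims that fail in general, and its central step is left unproved.

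\textbf{The structural claims are false.} The hypotheses $\Gamma(\alpha)=\R$ and outerness do not make $R^K=R^\alpha$ irreducible in $R$, or even a factor. Take $\alpha_t=\sigma^\infty_t\otimes\Ad(\mathrm{diag}(1,e^{\ri t\nu}))$ on $R\ovt\rM_2(\C)\cong R$ with $\nu\notin\Z\lambda+\Z\mu$. This flow is almost periodic and outer with $\Gamma(\alpha)=\R$, being a cocycle perturbation of $\sigma^\infty\otimes\id$. Yet $R^\alpha=R^{\sigma^\infty}\ovt\mathrm{D}_2$, with $\mathrm{D}_2$ the diagonal of $\rM_2(\C)$, has nontrivial center. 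Likewise $R^K$ need not contain a Cartan subalgebra $A$ of $R$, since such an $A$ forces $(R^K)'\cap R\subset A\subset R^K$. For the ergodic flows on twisted group factors $\rL_\Omega(\Gamma)$ one even has $R^K=\C1$. So presenting $\alpha$ by a pair $(\cR,c)$ with $R^K=\rL(\ker c)$ only covers flows fixing a Cartan subalgebra pointwise, which is Kawahigashi's already known case. Even there, $\cR$ is measure preserving, not of type $\III$. Also, $R\rtimes_\alpha\R$ is semifinite, because the dual weight of the invariant trace is a trace. It is therefore a type $\II_\infty$ factor, not type $\III_1$, and Theorem \ref{main extremal} produces no Cartan subalgebra for you. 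Finally, the crux of your plan is that $c$ is cohomologous to a split cocycle $0\oplus d$ with $d$ an odometer. That is not what the paper proves, and the tower construction you sketch is the whole difficulty, left open.

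\textbf{The missing idea.} The type $\III$ relation lives on $\cZ(R^\alpha)$, not on a Cartan subalgebra of $R$. After stabilizing, $R\ovt\B(\ell^2)=(R^\alpha\ovt\B(\ell^2))\rtimes_\theta\Gamma$ with $\Gamma$ the point spectrum. Factoriality of $R\rtimes_\alpha\R$ is then exactly ergodicity of the tautological $\R$-valued cocycle of $\Gamma\curvearrowright\cZ(R^\alpha)=\rL^\infty(X)$. Theorem \ref{ergodic kernel dense subgroup} makes this cocycle cohomologous, via some $f:X\to\R$, to a $\Gamma$-valued cocycle with ergodic kernel; it is proved from the Golodets--Sinelshchikov classification using model cocycles built with adjoint flows. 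The one-parameter group $v_t=e^{\ri tf}\in\cU(\cZ(R^\alpha))$ then turns $\alpha$ into $\beta=\Ad(v)\circ\alpha$ with $R^\beta$ a factor (Theorem \ref{cocycle perturbation flow}), and outerness is unaffected.

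In this prime case the Rokhlin unitaries are built by hand:
\begin{enumerate}
\item when $R^\beta=\C1$, from the Olesen--Pedersen--Takesaki twisted group algebra model;
\item when $R^\beta$ is of type $\I$, by a tensor splitting;
\item when $R^\beta$ is of type $\II_1$, from Jones' dual action description together with Ocneanu's $1$-cocycle vanishing in $N_\omega$.
\end{enumerate}
In each case it is outerness, through the density of $\{(\varphi(a),a)\}$, that supplies central eigen-sequences with frequencies dense in $\R$, not $\Gamma(\alpha)=\R$. Your preliminary reduction to a dense set of frequencies is fine, but nothing in your plan actually produces those sequences.
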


We emphasize the fact that Theorem \ref{main almost periodic flow} holds for {\em arbitrary} almost periodic flows $\alpha : \R \curvearrowright R$ with no extra assumptions on the fixed-point subalgebra $R^\alpha \subset R$. Prior to our work, the uniqueness of almost periodic flows on the hyperfinite type $\II_1$ factor up to cocycle conjugacy was only known in some very specific cases such as minimal almost periodic flows (i.e.\! $(R^\alpha)' \cap R = \C1$) \cite[Theorem 6.12]{MT12}. In \cite{Ka88b}, an interesting example of an ergodic almost periodic flow (i.e.\! $R^\alpha = \C1$) is also treated.

We also point out that Theorem \ref{main almost periodic flow} holds more generally for almost periodic actions of arbitrary second countable locally compact abelian groups on the hyperfinite $\II_1$ factor (see Theorem \ref{almost periodic action} below).

The proof of Theorem \ref{main almost periodic flow} divides into two independent steps. First, we prove the theorem in the case where the flow $\alpha$ is \emph{prime}, i.e.\! when the fixed point algebra $R^\alpha$ is a factor. We exploit the structure of prime almost periodic flows on the hyperfinite $\II_1$ factor \cite{Jo82, OPT79} to show by hand that $\alpha : \R \curvearrowright R$ has the Rokhlin property. Our argument is inspired by \cite[Theorem 6.12]{MT12} and \cite[Proposition 2.5]{Ki95}.

The second step, which is the key novelty of our paper, is a reduction from the general case to the prime case by a cocycle perturbation argument. In fact, we prove that every almost periodic flow $\alpha$ such that $\Gamma(\alpha)=\R$ is cocycle conjugate to an almost periodic flow that is prime.
 
\begin{letterthm} \label{cocycle perturbation flow}
    Let $M$ be a factor with separable predual and $\alpha : \R \curvearrowright M$ an almost periodic flow such that $\Gamma(\alpha) = \R$. Then $\alpha$ is cocycle conjugate to an almost periodic flow $\beta : \R \curvearrowright M$ such that  $M^\alpha \subset M^\beta$ and $M^\beta$ is a factor.

    More precisely, we have $\beta_t= \Ad(v_t) \circ \alpha_t$ for some continuous homomorphism $v : \R \rightarrow \cU(\cZ(M^\alpha))$.
\end{letterthm}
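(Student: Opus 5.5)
We first translate the problem into one about the centre of the fixed point algebra. Since $\alpha$ is almost periodic, the compact group $K = \overline{\{\alpha_t \mid t \in \R\}}$ is abelian and acts on $M$. Its dual $\Lambda = \widehat K$ is a countable discrete subgroup of $\widehat \R = \R$ (after identification via $t \mapsto \alpha_t$), and $M$ is the closure of the span of the spectral subspaces
$$M_\lambda = \{x \in M \mid \alpha_t(x) = e^{\ri t\lambda}x \text{ for all } t\in\R\}, \qquad \lambda \in \Lambda.$$
Put $N = M^\alpha = M_0$ and $A = \cZ(N)$, a diffuse-or-atomic abelian algebra with separable predual.

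For each $\lambda$ with $M_\lambda \neq 0$, the polar parts of elements of $M_\lambda$ give partial isometries normalising $N$. They therefore induce partial isomorphisms of the standard measure space $X$ with $A = \rL^\infty(X)$. Together these generate a nonsingular equivalence relation $\cR$ on $X$, which is amenable because it arises from the inclusion $N \subset M$ with a conditional expectation onto the discrete decomposition. The spectral label gives a cocycle $c : \cR \to \Lambda \subset \R$, recording the degree in $\Lambda$ of the partial isometry implementing $(x,y)$. Factoriality of $M$ translates into ergodicity of $\cR$. The condition $\Gamma(\alpha)=\R$ translates, via the Connes spectrum computed on reduced algebras $pMp$ with $p \in A$, into the statement that the cocycle $c$ is ergodic in Schmidt's sense: its essential range is all of $\R$, i.e.\ the skew product $\cR \times_c \R$ is ergodic (equivalently, $\overline{c}$ has full range into the Bohr compactification, i.e.\ $K$).

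Next we identify what the perturbation does. A continuous homomorphism $v:\R\to\cU(A)$ has the form $v_t = e^{\ri t f}$ with $f : X \to \R$ measurable and $f(X)$ contained in a countable set; continuity in the strong topology holds automatically. The perturbed flow $\beta_t = \Ad(v_t)\circ\alpha_t$ is still almost periodic. This is because on a partial isometry of $M_\lambda$ implementing $(x,y)$ it acts by the character $e^{\ri t(\lambda + f(x) - f(y))}$, so $\beta$ corresponds to the cohomologous cocycle $c' = c + \partial f$. Moreover $N \subset M^\beta$, since $v_t$ commutes with $N$. Now $M^\beta$ contains $N$ together with all partial isometries on which $c'=0$. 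Hence $\cZ(M^\beta) \subset A$ consists exactly of the $\ker c'$-invariant functions, and $M^\beta$ is a factor if and only if the subrelation $\ker(c') = \{(x,y) \mid c'(x,y)=0\}$ is ergodic.

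The proof therefore reduces to the key cocycle perturbation result for type $\III$ amenable equivalence relations announced in the abstract. Given an ergodic amenable (type $\III$, or also type $\II$ when $A$ is diffuse; the atomic case is handled separately and directly) equivalence relation $\cR$ and an ergodic cocycle $c:\cR\to\R$ taking values in a countable subgroup, we must find $f:X\to\R$ with countably many values such that $\ker(c+\partial f)$ is ergodic. For the construction, I would use a hyperfinite exhaustion $\cR = \bigcup_n \cR_n$ by finite subrelations and build $f = \sum_n f_n$ inductively. At stage $n$, we choose $f_n$, constant on the $\cR_{n-1}$-classes, so that $c+\partial(f_1+\dots+f_n)$ vanishes on a larger subrelation $\cS_n \supset \cS_{n-1}$. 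The ergodicity of $c$ (every real is an essential value) supplies, inside any positive-measure set, pairs with prescribed cocycle value up to $\varepsilon$. This lets us glue pieces and make $\bigcup_n \cS_n$ ergodic. The main obstacle is exactly this step: the values of $c$ are only approximately prescribed while we need exact cancellation, and we must keep $f$ countably valued and convergent. I would handle it by correcting the $\varepsilon$-errors using the countable group $\Lambda$, whose values are dense when the range is all of $\R$, choosing $f_n$ with values in $\Lambda$. Ergodicity of the limit subrelation would then be ensured by a Connes–Krieger style argument that uses a dense sequence of Borel sets and makes each one almost invariant at some stage.
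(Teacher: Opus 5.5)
\textbf{There is a genuine gap: the cohomology problem you reduce to is the wrong one, and in general it has no solution.} Your translation of the perturbation into $c\mapsto c+\partial f$, and of factoriality of $M^\beta$ into ergodicity of $\ker(c+\partial f)$, matches the paper (Lemma~\ref{central cocycle equivalence} and the end of the proof of Theorem~\ref{cocycle perturbation}). However, a continuous homomorphism $v:\R\to\cU(\cZ(M^\alpha))$ is $v_t=e^{\ri tf}$ for an \emph{arbitrary} real measurable $f$ (Stone). Only $c+\partial f$ has to be countably valued for $\beta$ to stay almost periodic.

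Restricting to countably valued $f$, and then to $f_n$ with values in $\Lambda$, is fatal. Suppose $f$ is countably valued and $\ker(c+\partial f)$ is ergodic. Then $x\mapsto f(x)+\Lambda$ is a countably valued $\ker(c+\partial f)$-invariant map, hence a.e.\ constant, so after subtracting a constant $f$ is $\Lambda$-valued. Thus $c$ would be cohomologous, \emph{as a cocycle into the discrete group} $\Lambda$, to a cocycle with ergodic kernel, which is strictly stronger than ergodicity into $\R$.

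Here is a counterexample. Let $\Lambda=\Z+\Z\sqrt2+\Z\sqrt3$ and $\phi(a+b\sqrt2+c\sqrt3)=a\sqrt5+b+c\sqrt7$; then $\phi$ is injective and $\{(\phi(\lambda),\lambda)\}$ is dense in $\R^2$. Let $M=\rL^\infty(\R)\rtimes\Lambda$, where $\lambda$ acts by translation by $\phi(\lambda)$, with $\alpha_t(u_\lambda)=e^{\ri t\lambda}u_\lambda$ and $\alpha_t=\id$ on $\rL^\infty(\R)$. Then $M$ is a $\II_\infty$ factor, $\alpha$ is almost periodic and $M^\alpha=\rL^\infty(\R)$. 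Moreover $M\rtimes_\alpha\R\cong\rL^\infty(\R^2)\rtimes\Lambda$ with dense translations is a factor, so $\Gamma(\alpha)=\R$. Your cocycle is $c(x+\phi(\lambda),x)=\lambda$. For $\Lambda$-valued $f$, the map $x\mapsto x+\phi(f(x))$ is $\ker(c+\partial f)$-invariant. Ergodicity would make it a.e.\ constant, forcing a.e.\ $x$ into a countable set. So here no countably valued $f$ works. The transfer function must take uncountably many values: cohomology has to be taken in $\R$, and only the \emph{new} cocycle lands in a countable dense subgroup. That is exactly the form of Theorem~\ref{ergodic kernel dense subgroup}.

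\textbf{Even with the correct formulation, the key step is missing.} Your inductive exhaustion tries to get ``ergodic kernel'' and ``values in a countable dense subgroup'' at once, i.e.\ to merge the constructions of \cite[Lemma 1.6]{GS91} and \cite[Proposition 1.2]{GS91}. The paper says explicitly that it does not know how to do this, and your sketch does not explain how exact cancellation is achieved. The paper instead uses the Golodets--Sinelshchikov classification (Theorem~\ref{classification of cocycles}): ergodic cocycles on an amenable ergodic relation are determined up to weak equivalence by the Mackey range of $c\times\omega$. It therefore suffices to build, for every ergodic action $\rho$ of $G\times\R^*_+$, a model with that Mackey range, values in $\Lambda$ and ergodic kernel (Theorem~\ref{model ergodic kernel dense subgroup}). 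This model is induced from a p.m.p.\ model times a type $\III_1$ relation through $\rho^{(-1)}$; the adjoint-flow identity $(\rho^{(-1)})^{(1)}\cong\rho$ is what makes the Mackey range come out right. Krieger's theorem then moves the model onto $\cR$.

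Your reduction also overlooks non-freeness. If some nonzero degree acts trivially on $\cZ(M^\alpha)$, then ``the degree of the partial isometry implementing $(x,y)$'' is not well defined. In that case $\Gamma(\alpha)=\R$ only yields ergodicity of the image cocycle in $\R/\overline N$, where $N$ is the kernel of the action on $\cZ(M^\alpha)$. The paper handles this with the splitting $\Gamma\ltimes X\cong N\times\cR$ (Proposition~\ref{abelian groupoid}). It then applies Theorem~\ref{ergodic kernel dense subgroup} in $\R/\overline N$ and Lemma~\ref{dense subgroup} on $\ker\widetilde d_0$ to push the lifted values into $N$.
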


Quite unexpectedly, in order to prove Theorem \ref{cocycle perturbation flow}, one has to deal with type $\III$ equivalence relations, even if one is only interested in the case where $M$ is a $\II_1$ factor. Indeed, the almost periodicity of the flow $\alpha : \R \curvearrowright M$ implies that $M^\alpha$ is stably normalized by the eigenvectors of $\alpha$ and this induces an amenable equivalence relation $\cR$ on the center $\cZ(M^\alpha)$. The morphism $v : \R \rightarrow \cU(\cZ(M^\alpha))$ of Theorem \ref{cocycle perturbation flow}, is obtained by studying the cohomology of $\cR$ and it turns out that this equivalence relation can be of arbitrary type (possibly type $\III$) even when $M$ is a $\II_1$ factor. This phenomenon was already observed in \cite{BHV15}.

\subsection{Cohomology of amenable ergodic equivalence relations}

The proof of Theorem \ref{cocycle perturbation flow} relies on a key result regarding the cohomology of amenable ergodic nonsingular equivalence relations, which is of independent interest. In order to state this result, we introduce some further terminology.

Let $\mathcal R$ be an ergodic nonsingular equivalence relation defined on a diffuse standard probability space $(X, \nu)$. Let $G$ be a second countable locally compact group with a left invariant Haar measure $m_G$ and $c : \mathcal R \to G$ a measurable cocycle. Following \cite{GS91}, we say that the cocycle $c$ is {\em ergodic} if for every nonnull measurable subset $Y \subset X$, the essential image of $c|_{\mathcal R_Y} : \mathcal R_Y \to G$ is equal to $G$. Here, $\mathcal R_Y = \mathcal R \cap (Y \times Y)$ denotes the restricted ergodic nonsingular equivalence relation on the diffuse standard probability space $(Y, \nu_Y)$, where $\nu_Y = \frac{1}{\nu(Y)}\nu|_Y$. Clearly, if $\ker(c) < \cR$ is an ergodic subequivalence relation and the essential image of $c$ is equal to $G$, then $c$ is ergodic, but the converse is not true (except when $G$ is discrete).

In \cite{GS91}, Golodets--Sinelshchikov proved on the one hand, that if a cocycle $c : \mathcal R \to G$ is ergodic, then it is always \emph{cohomologous} to a measurable cocycle $d : \mathcal R \to G$ for which $\ker(d) < \mathcal R$ is ergodic. On the other hand, they proved that if $\mathcal R$ is amenable (or hyperfinite by \cite{CFW81}), then for any countable dense subgroup $\Lambda < G$, the cocycle $c$ is cohomologous to a measurable cocycle $d : \mathcal R \to G$ with values in $\Lambda$. 

Now, assume at the same time that $\mathcal R$ is amenable and that $c : \mathcal R \to G$ is ergodic (under these conditions, $G$ must be amenable). Can we then find a measurable cocycle $d : \mathcal R \to G$ cohomologous to $c$ such that at the same time $\ker(d) < \mathcal R$ is ergodic and $d$ takes values in a dense countable subgroup? If such a cocycle $d$ exists, then it must take values in an \emph{amenable} dense countable  subgroup of $G$. Our key result shows that this necessary condition is actually sufficient.

\begin{letterthm} \label{ergodic kernel dense subgroup}
    Let $\cR$ be an ergodic equivalence relation and  $c : \mathcal R \to G$ a measurable cocycle with values in a second countable locally compact group $G$. Suppose that $\cR$ is amenable, that $c$ is ergodic and that $G$ contains a dense countable amenable subgroup $\Lambda < G$.
    
    Then $c$ is cohomologous to a measurable cocycle $d : \cR \rightarrow G$ with values in $\Lambda$ and such that the subequivalence relation $\ker(d) < \cR$ is ergodic.
\end{letterthm}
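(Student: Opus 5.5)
We will combine the two Golodets--Sinelshchikov results from \cite{GS91}, using amenability of $\Lambda$ to make the reduction to $\Lambda$ compatible with ergodicity of the kernel.

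\emph{First reduction.} Since $c$ is ergodic, \cite{GS91} lets us replace $c$ by a cohomologous cocycle whose kernel $\cS = \ker(c) < \cR$ is ergodic. Now $\cR$ is amenable, hence hyperfinite by \cite{CFW81}, and so is $\cS$. We will build a cocycle $d \sim c$ with values in $\Lambda$ by a transfer function $\phi : X \to G$, $d(x,y) = \phi(x) c(x,y) \phi(y)^{-1}$.

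\emph{Constructing the transfer function.} We realize $\cR$ as an increasing union of finite subrelations $\cR_1 \subset \cR_2 \subset \cdots$ and construct $\phi = \lim_n \phi_n$ inductively. At stage $n$ we want three things:
\begin{itemize}
\item[(i)] $\phi_n c \phi_n^{-1}$ takes values in $\Lambda$ on $\cR_n$;
\item[(ii)] $\phi_{n+1}\phi_n^{-1}$ is close to $1$ off a set of small measure, so that the limit exists;
\item[(iii)] a fixed finite ergodicity witness is preserved: for finitely many prescribed sets $A_k$, the kernel of the new cocycle still connects most of $A_k$ to most of its complement, up to errors $\varepsilon_n$ that are summable.
\end{itemize}
The key observation for (i) is the following. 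On a finite relation, $\cR_n$-classes are finite sets, so we can choose $\phi_n$ on each class to conjugate the values of $c$ into $\Lambda$. By density of $\Lambda$, this only requires choosing one representative value in each class and correcting by small elements.

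For (iii), ergodicity of $\ker(d)$ can be reformulated as follows: for every nonnull $Y$, the cocycle restricted to $\cR_Y$ hits arbitrarily small neighbourhoods of $1$ densely in measure. Ergodicity of $c$ gives this for $c$; we want it to persist with exact value $1$ for $d$.

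\emph{Role of amenability of $\Lambda$.} This enters when upgrading ``approximately $1$'' to ``exactly $1$''. Using ergodicity of $c$ we find, inside each large $\cR_n$-block, many partial isomorphisms where $c$ lies in a small neighbourhood $U$ of $1$. Correcting $\phi$ along these makes $d$ equal to $1$ there, but the corrections must be coherent, i.e.\ form a consistent choice of $\Lambda$-valued adjustments over the finite classes. We plan to use F\o lner sets of $\Lambda$: averaging the correction over a F\o lner set $F \subset \Lambda$ makes the proportion of points where consistency fails at most $|F \triangle \lambda F|/|F|$, which is small. This is the analogue, for the kernel, of the Ornstein--Weiss/Connes--Krieger-type quasi-tiling argument. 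An alternative route is to build a hyperfinite ergodic $\Lambda$-extension and invoke uniqueness of ergodic cocycles of amenable relations up to orbit equivalence (Golodets--Sinelshchikov classification via Mackey range). We would try this route first: $d$ and $c$ have the same Mackey range, namely the action of $G$ on a point, and so are cohomologous, up to an automorphism of $\cR$ that has to be removed.

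\emph{Main obstacle.} The hardest step is (iii): keeping the kernel ergodic in the limit while forcing $\Lambda$-values. Type $\III$ of $\cR$ prevents using invariant measures, so the estimates must be made with Radon--Nikodym weights, restricting to sets where these are nearly constant (ratio sets).
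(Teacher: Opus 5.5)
There is a genuine gap, and it sits in the route you say you would try first. It misreads the Golodets--Sinelshchikov classification (Theorem~\ref{classification of cocycles}). The invariant for weak equivalence of ergodic cocycles is not the Mackey range of $c$, which is trivial for \emph{every} ergodic cocycle. It is the Mackey range $\rho : G\times\R^*_+\curvearrowright (Y,\eta)$ of the joint cocycle $c\times\omega$, where $\omega$ is the Radon--Nikodym cocycle. Your ``uniqueness of ergodic cocycles'' holds only when $\omega$ is a coboundary (type $\II$), and it is false in type $\III$. For example, let $\cR=\cR_1\times\cR_2$, with $\cR_1$ hyperfinite measure preserving carrying an ergodic cocycle $c_1:\cR_1\to\R^*_+$, and $\cR_2$ hyperfinite of type $\III_1$. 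Then $c_1\circ p_1$ (with $p_1$ the first projection) and $\omega$ are both ergodic $\R^*_+$-valued cocycles on the same type $\III_1$ relation. The skew product of $(c_1\circ p_1)\times\omega$ is ergodic, whereas that of $\omega\times\omega$ has the ratio of the two fibre coordinates as an invariant function, so the two cocycles are not weakly equivalent. Hence an arbitrary ``hyperfinite ergodic $\Lambda$-extension'' proves nothing: you must realize the specific $\rho$ determined by $c$.

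That is the real content of the proof, namely Theorem~\ref{model ergodic kernel dense subgroup}. For the given $\rho$, whose restriction to $\R^*_+$ is ergodic because $c$ is, you need an amenable ergodic $\cS$ carrying a $\Lambda$-valued cocycle $d$ with ergodic kernel such that the Mackey range of $d\times\delta$ is $\rho$. The paper builds it from three ingredients:
\begin{enumerate}
\item the measure preserving model $\cR_0$, the orbit relation of $\Lambda\ltimes F^{(\Lambda)}\curvearrowright F^\Lambda$, which is amenable exactly because $\Lambda$ is (this, not F\o lner averaging, is where amenability of $\Lambda$ enters);
\item an amenable type $\III_1$ relation $\cT$;
\item the induced relation $(\cR_0\times\cT)_{\rho^{(-1)},c_0\times\delta}$, whose joint Mackey range is $(\rho^{(-1)})^{(1)}\cong\rho$ by the adjoint flow calculus.
\end{enumerate}
Matching $\rho$ also matches the Krieger flows, which gives $\cS\cong\cR$ (after amplification in type $\II$). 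Also, no automorphism has to be removed at the end: $d\circ(\theta\times\theta)$ is still $\Lambda$-valued, has ergodic kernel, and is cohomologous to $c$.

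Your inductive route (i)--(iii) does not close the gap either. A transfer function that is not $\ker(c)$-invariant destroys the ergodic kernel from your first reduction, and nothing in the sketch produces the exact value $1$ on a new ergodic subrelation. Moreover, the criterion you state (that $d|_{\cR_Y}$ takes the value $1$ nontrivially for every nonnull $Y$) only says $\ker(d)$ is nowhere trivial, which is strictly weaker than ergodicity. The paper explicitly says it could not merge the two Golodets--Sinelshchikov inductions, and that is why it goes through the classification theorem.
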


The proof of Theorem \ref{ergodic kernel dense subgroup} relies on Golodets--Sinelshchikov's classification theorem for measurable cocycles on amenable ergodic nonsingular equivalence relations (see \cite[Theorem 3.1]{GS91}) and a novel construction of model actions using the notion of adjoint flows of \cite{VV22}.

\subsection{Extremal almost periodic weights}

Let $M$ be a factor with separable predual and denote by $\mathcal P(M)$ the set of all faithful normal semifinite weights on $M$. Following \cite{Co72,Co74}, we say that $\varphi \in \mathcal P(M)$ is {\em almost periodic} if  the modular automorphism group $\sigma^\varphi : \R \curvearrowright M$ is almost periodic. Almost periodic weights are of the utmost importance in the classification theory of type $\III$ factors. Indeed, Connes \cite{Co74} showed that when $M$ is a type $\III$ factor, to any almost periodic weight $\varphi \in \mathcal P(M)$, one can associate a canonical discrete decomposition of the form $M = N \rtimes_\theta \Gamma$, where $N$ is a type ${\rm II_\infty}$ von Neumann algebra and $\theta : \Gamma \curvearrowright N$ is a trace-scaling action for some countable subgroup $\Gamma < \R^*_+$. Using the above discrete decomposition, the classification problem for type $\III$ factors reduces to the classification of type ${\rm II_\infty}$ von Neumann algebras and their outer automorphisms. In \cite{Co72, Co75a, Co75b}, Connes applied this approach with great success to the structure and the classification of factors of type ${\rm III_\lambda}$ with $\lambda \in [0, 1)$.

Let $M$ be a type $\III$ factor with separable predual and $\varphi \in \mathcal P(M)$ an almost periodic weight. We then simply say that $M$ is an almost periodic type $\III$ factor. We  say that $\varphi \in \mathcal P(M)$ is an \emph{extremal} almost periodic weight if the centralizer von Neumann algebra $M_\varphi$ is a factor. Any extremal almost periodic weight $\varphi \in \mathcal P(M)$ gives rise to a {\em factorial} discrete decomposition $M = N \rtimes_\theta \Gamma$, where $N$ is a type ${\rm II_\infty}$ factor. While every type $\III_\lambda$ factor for $\lambda \in (0,1)$ has an extremal periodic weight, namely the $\frac{2\pi}{|\log(\lambda)|}$-periodic weight, it is known that a type $\III_0$ factor has no extremal almost periodic weight \cite{Co72}. In \cite{Co74}, Connes proved that  any \emph{full} almost periodic factor of type $\III_1$ with separable predual has an extremal almost periodic weight. In his argument, the fullness assumption is essential. As a byproduct of Theorem \ref{cocycle perturbation flow}, we are able to remove the fullness assumption.

\begin{letterthm} \label{main extremal}
Let $M$ be an almost periodic factor of type $\III_1$ with separable predual. Then $M$ has an extremal almost periodic weight $\varphi \in \mathcal P(M)$. In particular, $M$ has a factorial discrete decomposition.
\end{letterthm}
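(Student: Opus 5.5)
The plan is to apply Theorem \ref{cocycle perturbation flow} to the modular flow of an almost periodic weight, and then to realize the perturbed flow as a modular flow via Connes' Radon--Nikodym theorem.

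\emph{Step 1: the hypotheses of Theorem \ref{cocycle perturbation flow}.} Fix an almost periodic weight $\psi \in \mathcal P(M)$ and set $\alpha = \sigma^\psi : \R \curvearrowright M$, which is almost periodic by assumption. Since $M$ is a type $\III_1$ factor, Connes' invariant gives $\Gamma(\sigma^\psi) = S(M) \cap \R$-dual $= \R$; equivalently, the continuous core $M \rtimes_{\sigma^\psi} \R$ is a factor. So Theorem \ref{cocycle perturbation flow} applies and yields a continuous homomorphism $v : \R \to \cU(\cZ(M_\psi))$ such that $\beta_t = \Ad(v_t)\circ \sigma^\psi_t$ is almost periodic and $M^\beta$ is a factor. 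Here we use that $M^\alpha = M_\psi$.

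\emph{Step 2: realizing $\beta$ as a modular flow.} The operators $v_t$ lie in $\cZ(M_\psi) \subset M_\psi$, so $\sigma^\psi_s(v_t) = v_t$ for all $s,t$. Hence $t \mapsto v_t$ is a $\sigma^\psi$-cocycle: $v_{s+t} = v_s \sigma^\psi_s(v_t)$, using that $v$ is a homomorphism. By Stone's theorem inside the abelian algebra $\cZ(M_\psi)$, write $v_t = h^{\ri t}$ for a nonsingular positive self-adjoint operator $h$ affiliated with $\cZ(M_\psi) \subset M_\psi$. Define $\varphi = \psi_h = \psi(h^{1/2}\,\cdot\, h^{1/2})$. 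By Pedersen--Takesaki, $\varphi \in \mathcal P(M)$ and $(D\varphi : D\psi)_t = h^{\ri t} = v_t$, so $\sigma^\varphi_t = \Ad(v_t)\circ\sigma^\psi_t = \beta_t$. This is the alternative to invoking Connes' converse cocycle theorem.

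\emph{Step 3: conclusion.} Since $\sigma^\varphi = \beta$ is almost periodic, $\varphi$ is an almost periodic weight. Its centralizer is $M_\varphi = M^{\sigma^\varphi} = M^\beta$, which is a factor, so $\varphi$ is extremal. Connes' discrete decomposition associated with $\varphi$ then has $N = M_\varphi$ up to the standard identification. More precisely, $N$ is the centralizer of $\varphi\otimes\mathrm{Tr}$ on $M \ovt B(\ell^2)$ or of a dominant-type weight, and this is a factor whenever $M_\varphi$ is a factor in the type $\III_1$ setting.

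\emph{Main obstacle.} The substantive input is entirely Theorem \ref{cocycle perturbation flow}. Within this proof the delicate points are two. First, $h$ may be unbounded, so $\varphi$ is only a weight even if $\psi$ is a state; this is why the statement asks for a weight. Second, the passage from ``$M_\varphi$ is a factor'' to ``the discrete decomposition is factorial'' needs care: I would cite Connes \cite{Co74}, where $N$ is built from $M_\varphi$ after tensoring with $B(\ell^2)$, and factoriality is preserved.
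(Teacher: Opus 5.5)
Your proof is correct and follows the paper's argument, which goes through the more precise Theorem \ref{precise extremal}: you apply the cocycle perturbation theorem to the modular flow, whose crossed product is the continuous core and hence a factor for type $\III_1$, and then realize $\Ad(v_t)\circ\sigma^\psi_t$ as a modular flow. The only differences are minor. First, the paper invokes Connes' Radon--Nikodym cocycle theorem where you construct the weight explicitly as $\psi_h$ with $v_t=h^{\ri t}$, which also shows directly that it commutes with $\psi$. Second, in Connes' discrete decomposition the relevant weight is $\varphi\otimes\omega$, with $\omega$ on $\B(\ell^2)$ having eigenvalues in the group generated by the point spectrum of $\Delta_\varphi$, and not $\varphi\otimes\mathrm{Tr}$, whose centralizer is just $M_\varphi\ovt\B(\ell^2)$; the paper likewise takes the factoriality of the resulting core as standard.
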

 
We use Theorem \ref{main extremal} and its more precise version (see Theorem \ref{precise extremal}) to obtain a new  characterization of extremal almost periodic weights (see Corollary \ref{cor extremal} below).

\subsection*{Acknowledgments} We are grateful to Sorin Popa for his insightful comments.


\section{Preliminaries}\label{preliminaries}

\subsection{Cocycle conjugacy}

Let $M$ be a von Neumann algebra with separable predual, $G$ a second countable locally compact group and $\alpha : G \curvearrowright M$ a continuous action. We denote by $M \rtimes_\alpha G$ the crossed product von Neumann algebra generated by $M$ and a copy of the left regular representation $\lambda_\alpha : G \to \mathcal U(M \rtimes_\alpha G) : g \mapsto \lambda_\alpha(g)$ in such a way that 
$$\forall g \in G, \forall x \in M, \quad \alpha_g(x) = \lambda_\alpha(g) x \lambda_\alpha(g)^*.$$
A strongly continuous map $u : G \to \mathcal U(M) : g \mapsto u_g$ is said to be a $1$-{\em cocycle} for $\alpha$ if $u_{gh} = u_g \alpha_g(u_h)$ for all $g, h \in G$. We denote by $\rZ^1(\alpha, G, M)$ the space of all $1$-cocycles for $\alpha$. If $u \in \rZ^1(\alpha, G, M)$, then we may define a new continuous action $\alpha^u : G \curvearrowright M$ by the formula $\alpha_g^u = \Ad(u_g) \circ \alpha_g$ for every $g \in G$.

\begin{definition}\label{cocycle conjugacy}
Let $\alpha, \beta : G \curvearrowright M$ be continuous actions. We say that $\alpha$ and $\beta$ are {\em cocycle conjugate} if there exist an automorphism $\theta \in \Aut(M)$ and a $1$-cocycle $u \in \rZ^1(\alpha, G, M)$ such that 
$$\forall g \in G, \quad \alpha^u_g = \theta \circ \beta_g \circ \theta^{-1}.$$
If we can take $\theta=\id_M$, we say that $\alpha$ and $\beta$ are {\em cocycle equivalent}.
\end{definition}

If $\alpha$ and $\beta$ are cocycle conjugate with $\theta \in \Aut(M)$ and $u \in \rZ^1(\alpha, G, M)$ as in Definition \ref{cocycle conjugacy}, then the mapping 
$$\pi_{\alpha, \beta} : M \rtimes_\beta G \to M \rtimes_\alpha G : x \lambda_\beta(g) \mapsto \theta(x) u_g \lambda_\alpha(g)$$
extends to a well-defined isomorphism such that $\pi_{\alpha, \beta}(M) = M$. In other words, the inclusions $M \subset M \rtimes_\alpha G$ and $M \subset M \rtimes_\beta G$ are isomorphic.

\subsection{The Rokhlin property}

Let $M$ be a von Neumann algebra with separable predual. We denote by $\ell^\infty(\N, M)$ the unital $\rC^*$-algebra of all norm bounded sequences in $M$. Fix a nonprincipal ultrafilter $\omega \in \beta(\N)\setminus \N$. Define the $\rC^*$-subalgebras $\mathcal I_\omega, \mathcal M^\omega \subset \ell^\infty(\N, M)$ by the formulae
\begin{align*}
	\mathcal I_{\omega} &= \left\{ x = (x_n)_{n} \in \ell^\infty(\N, M) \mid x_n \to 0 \text{ $\ast$-strongly as } n \to \omega \right\} \\
	\mathcal M^{\omega} &= \left \{ x = (x_n)_n \in \ell^\infty(\N, M) \mid  x \mathcal I_{\omega} \subset \mathcal I_{\omega} \text{ and } \mathcal I_{\omega}x \subset \mathcal I_{\omega}\right\}.
\end{align*}
The quotient $\rC^*$-algebra $M^\omega = \mathcal{M}^\omega/ \mathcal{I}_\omega$ is a von Neumann algebra, and we simply call it the \textit{ultraproduct von Neumann algebra} \cite{Oc85}. For any faithful normal state $\varphi\in M_*$, the assignment $\mathcal M^\omega \to \C : (x_n)_n \mapsto \lim_{n\to \omega} \varphi(x_n)$ induces a faithful normal state on $M^\omega$, which we write $\varphi^\omega$. 
There is a natural normal embedding $M \subset M^\omega$ with faithful normal expectation $\rE_M : M^\omega \to M$ which satisfies $\varphi^\omega = \varphi \circ \rE_M$. For any $(x_n)_n\in \mathcal M^\omega$, we denote by $(x_n)^\omega$ its image in $M^\omega$. We also consider the {\em asymptotic centralizer} $M_\omega = (M' \cap M^\omega)_{\varphi^{\omega}}$, which is the centralizer with respect to the ultraproduct state $\varphi^\omega$ of the central sequence algebra $M' \cap M^\omega$ \cite{Co74, AH12}. We observe that the asymptotic centralizer $M_\omega$ does not depend on the choice of the faithful normal state $\varphi \in M_\ast$.

Let $G$ be a second countable locally compact group and $\alpha : G \curvearrowright M$ a continuous action. The ultraproduct action $\alpha^\omega : G \curvearrowright M^\omega$ is quite discontinuous in general. Following \cite[Section 3]{MT12}, we introduce the von Neumann subalgebra $M^\omega_\alpha \subset M^\omega$ of all $(\alpha, \omega)$-equicontinuous elements on which the restriction of the ultraproduct action $\alpha^\omega : G \curvearrowright M^\omega$ becomes continuous. For this, let us fix a faithful normal state $\varphi \in M_\ast$. A sequence $(x_n)_n \in \mathcal M^\omega$ is said to be $(\alpha, \omega)$-{\em equicontinuous} if for every $\varepsilon > 0$, there exists an open neighborhood $U \subset G$ of the neutral element $e \in G$ such that 
$$\left\{n \in \N \mid \forall g \in U, \|\alpha_g(x_n) - x_n\|_\varphi^\sharp < \varepsilon \right\} \in \omega.$$
Denote by $\mathcal E_\alpha^\omega \subset \mathcal M^\omega$ the unital $\rC^*$-subalgebra of all $(\alpha, \omega)$-equicontinuous elements. We observe that $\mathcal E_\alpha^\omega$ does not depend on the choice of the faithful normal state $\varphi \in M_\ast$. Then we have $\mathcal I_\omega \subset \mathcal E_\alpha^\omega \subset \mathcal M^\omega$ and the quotient $\rC^*$-algebra $M_\alpha^\omega = \mathcal E_\alpha^\omega/ \mathcal{I}_\omega$ is a von Neumann subalgebra of $M^\omega$. Moreover $M_\alpha^\omega \subset M^\omega$ is globally invariant under the ultraproduct action $\alpha^\omega$ and the restriction $\alpha^\omega : G \curvearrowright M_\alpha^\omega$ defines a continuous action. We also consider the $(\alpha, \omega)$-equicontinuous part $M_{\alpha, \omega} = M_\alpha^\omega \cap M_\omega$ of the asymptotic centralizer and the corresponding continuous action $\alpha^\omega : G \curvearrowright M_{\alpha, \omega}$.

Following \cite[Section 4]{MT12}, we introduce the Rokhlin property for continuous actions of arbitrary abelian second countable locally compact groups on von Neumann algebras.

\begin{definition}\label{Rokhlin property}
Let $G$ be an abelian second countable locally compact group and $\alpha : G \curvearrowright M$ a continuous action. We say $\alpha : G \curvearrowright M$ satisfies the {\em Rokhlin property} if for every $p \in \widehat G$, there exists a unitary $u_p \in \mathcal U(M_{\alpha, \omega})$ such that $\alpha^\omega_g(u_p) = \langle g, p\rangle u_p$ for every $g \in G$.
\end{definition}

We observe that the Rokhlin property in Definition \ref{Rokhlin property} does not depend on the choice on the nonprincipal ultrafilter $\omega \in \beta(\N) \setminus \N$.

\section{The Maharam extension and the adjoint flows}

\subsection{The Maharam extension}
Let $M$ be a von Neumann algebra. Then, up to a unique isomorphism, there exists a unique triple $(\widetilde{M},\tau,\theta)$ where $\widetilde{M}$ is a von Neumann algebra containing $M$, $\tau$ is a faithful semifinite trace on $\widetilde{M}$ and $\theta : \R^*_+ \curvearrowright \widetilde{M}$ is an action that scales the trace $\tau$, that is $\tau \circ \theta_\lambda =\lambda \tau$ for all $\lambda \in \R^*_+$. We call $(\widetilde{M},\tau,\theta)$ the \emph{noncommutative flow of weights} of $M$ (see \cite[Chapter ${\rm XII}$]{Ta03}). 

Suppose that $\alpha : G \curvearrowright M$ is a continuous action of a locally compact group $G$. Then $\alpha$ admits a unique extension $\tilde{\alpha} : G \curvearrowright \widetilde{M}$ that preserves the trace $\tau$ and commutes with the scaling action $\theta$. We call $\tilde{\alpha}$ the \emph{Maharam extension} of $\alpha$.

\begin{example}
    Suppose that $\alpha : G \curvearrowright (X,\nu)$ is a nonsingular action on a $\sigma$-finite measure space. We can view $\alpha$ as an action on an abelian von Neumann algebra $M$. The explicit formula for its Maharam extension is given as follows. Letting $M=\rL^\infty(X,\nu)$, we have $\widetilde{M}=\rL^\infty(X \times \R^*_+,\nu \otimes m)$ with trace $\tau=\nu \otimes m$ where $m$ is the measure on $\R^*_+$ defined by $\rd m(\lambda) = \lambda^{- 2}\rd \lambda$, and the trace scaling action $\theta$ is given by $\theta_\lambda(x,s)=(x,\lambda^{-1}s)$. The Maharam extension $\widetilde{\alpha} : G \curvearrowright (X \times \R^*_+, \nu \otimes m)$ is given by
    $$ \widetilde{\alpha}_g(x,s)=(\alpha_g(x), \omega(g,x) s), \quad g \in G, \; (x,s) \in X \times \R^*_+$$
    where $\omega(g,x)=\frac{\rd \nu (\alpha_g(x))}{\rd \nu(x)}$ is the \emph{Radon--Nikodym cocycle}. 
\end{example}

\subsection{Adjoint flows}
Let $\alpha : \R^*_+ \curvearrowright M$ be a continuous flow where $M$ is a von Neumann algebra. Let $(\widetilde{M},\tau,\theta)$ be the noncommutative flow of weights of $M$ where $\theta : \R^*_+ \curvearrowright \widetilde{M}$ scales the trace $\tau$. Let $\widetilde{\alpha} : \R^*_+ \curvearrowright \widetilde{M}$ be the natural extension of $\alpha$ to $\widetilde{M}$. Recall that $\widetilde{\alpha}$ commutes with $\theta$. For $t \in \R$, we define 
$$ M^{(t, \alpha)}=\left\{ x \in \widetilde{M} \mid \forall \lambda \in \R^*_+, \; \widetilde{\alpha}_{\lambda^t}(x )=\theta_\lambda(x) \right\}.$$

We denote by $\alpha^{(t)} : \R^*_+ \curvearrowright M^{(t,\alpha)}$ the restriction of $\widetilde{\alpha}$ to $M^{(t,\alpha)}$. Observe that $M^{(0,\alpha)}=M$ and $\alpha^{(0)}=\alpha$.

Take $t,s \in \R$. We claim that for any flow $\alpha : \R^*_+ \curvearrowright M$, the flow $(\alpha^{(t)})^{(s)}$ is naturally isomorphic to $\alpha^{(t+s)}$.

Indeed, let $N=M^{(t,\alpha)}$ and $\beta = \alpha^{(t)}$. Then, by definition, $N$ is the fixed point algebra of $\widetilde{M}$ under the trace scaling action $\theta' : \lambda \mapsto \widetilde{\alpha}_{\lambda^{-t}}\circ \theta_\lambda$. Thus $(\widetilde{M},\tau,\theta')$ is naturally identified with the noncommutative flow of weights of $N$. Since $\widetilde{\alpha}|_N = \beta$ and $\widetilde{\alpha}$ preserves $\tau$ and commutes with $\theta'$, we then see that the natural extension $\widetilde{\beta}$ of $\beta$ to the noncommutative flow of weights of $N$ is identified with $\widetilde{\alpha}$. Therefore, $N^{(s,\beta)}$ is the fixed point algebra of $\widetilde{M}$ under the action 
$$\lambda \mapsto \widetilde{\beta}_{\lambda^{-s}} \circ \theta'_\lambda=\widetilde{\alpha}_{\lambda^{-s}} \circ \theta'_\lambda = \widetilde{\alpha}_{\lambda^{-s}} \circ \widetilde{\alpha}_{\lambda^{-t}}\circ \theta_\lambda = \widetilde{\alpha}_{\lambda^{-(t+s)}}\circ \theta_\lambda.$$
This shows that $N^{(s,\beta)}=M^{(t+s,\alpha)}$ and $\beta^{(s)}$ is the restriction of $\widetilde{\beta}=\widetilde{\alpha}$ to $N^{(s,\beta)}=M^{(t+s,\alpha)}$, which is precisely $\alpha^{(t+s)}$.

\begin{example}
    Suppose that $\alpha : \R^*_+ \curvearrowright (X,\nu)$ is a nonsingular flow on a $\sigma$-finite measure space. We can view $\alpha$ as an action on an abelian von Neumann algebra and construct a new nonsingular flow $\alpha^{(t)}$. The explicit formula for this flow is given as follows. Letting $M=\rL^\infty(X,\nu)$, we have $\widetilde{M}=\rL^\infty(X \times \R^*_+,\nu \otimes m)$ with trace $\tau=\nu \otimes m$ where $m$ is the measure on $\R^*_+$ defined by $\rd m(\lambda) = \lambda^{- 2}\rd \lambda$, and the trace scaling action $\theta$ is given by $\theta_\lambda(x,s)=(x,\lambda^{-1}s)$. The Maharam extension $\widetilde{\alpha} : \R^*_+ \curvearrowright (X \times \R^*_+, \nu \otimes m)$ is given by
    $$ \widetilde{\alpha}_\lambda(x,s)=(\alpha_\lambda(x), \omega(\lambda,x) s)$$
    where $\omega(\lambda,x)=\frac{\rd \nu (\alpha_\lambda(x))}{\rd \nu(x)}$ is the Radon--Nikodym cocycle. Then for $t \in \R$, the algebra $M^{(t,\alpha)}$ consists of all functions $F \in \rL^\infty(X \times \R^*_+)$ such that $$F(\alpha_{\lambda^t}(x), \lambda \omega(\lambda^t,x) s) =F(x,s)$$ for all $\lambda \in \R^*_+$ and all $(x,s) \in X \times \R^*_+$. 
\end{example}

\begin{remark}
Let $\alpha : \R^*_+ \curvearrowright M$ be a flow. We denote by $\alpha^{\op}$ the flow given by $(\alpha^{\op})_\lambda= \alpha_{\lambda^{-1}}$ for all $\lambda \in \R^*_+$. Then it is easy to see that $M^{(t,\alpha^{\op})} = M^{(-t,\alpha)}$ and $(\alpha^{\op} )^{(t)}= (\alpha^{(-t)})^{\op}$. Define $\alpha^{\dagger}=(\alpha^{(1)})^{\op}=(\alpha^{\op} )^{(-1)}$. Then it follows from the foregoing that $(\alpha^{\dagger})^{\dagger}$ is naturally isomorphic to $\alpha$. The flow $\alpha^\dagger$ coincides with the {\em adjoint flow} of $\alpha$ defined in \cite{VV22} in the case where $M$ is an abelian von Neumann algebra. 

When $\alpha$ is trace preserving, it is easy to see that $\alpha^{(t)} \cong \alpha$ for every $t \in \R$. In particular, we have $\alpha^\dagger \cong \alpha^{\op}$. Note that in \cite[Section 3]{VV22}, it is incorrectly stated that if $\alpha$ is trace preserving, then $\alpha^\dagger \cong \alpha$.
\end{remark}

The construction $(M,\alpha) \mapsto (M^{(t,\alpha)},\alpha^{(t)})$ is functorial with respect to $\R^*_+$-equivariant isomorphisms. More precisely, if $\beta : \R^*_+ \curvearrowright N$ is another flow and $\pi : M \rightarrow N$ is an isomorphism of von Neumann algebras that conjugates $\alpha$ with $\beta$, then $\pi$ extends to an isomorphism $\widetilde{\pi} : \widetilde{M} \rightarrow \widetilde{N}$ that conjugates $\widetilde{\alpha}$ with $\widetilde{\beta}$ and $\theta_M$ with $\theta_N$. In particular, $\widetilde{\pi}$ restricts to an isomorphism from $M^{(t,\alpha)}$ onto $N^{(t,\beta)}$ that conjugates $\alpha^{(t)}$ with $\beta^{(t)}$. 

Set $\Aut(M, \alpha) =\left\{\pi \in \Aut(M) \mid \pi \text{ commutes with } \alpha\right\}$. By taking $M=N$, we see that the map 
$$\Aut(M,\alpha)\to \Aut(M^{(t,\alpha)},\alpha^{(t)}) : \pi \mapsto \pi^{(t)}$$ 
induces a continuous homomorphism.
In particular, if $G$ is a locally compact group and $\rho : G \curvearrowright M$ is a continuous action that commutes with a flow $\alpha : \R^*_+ \curvearrowright M$, then we obtain a continuous action $\rho^{(t)} : G\curvearrowright M^{(t,\alpha)}$ that commutes with $\alpha^{(t)}$. Again, we have that the action $(\rho^{(t)})^{(s)}$ is naturally isomorphic to $\rho^{(t+s)}$ for every $s,t \in \R$.


\section{Induced equivalence relations}
Let $\cR$ be a nonsingular equivalence relation on a measure space $(X,\nu)$ and let $c : \cR \rightarrow G$ be a cocycle with values in a second countable locally compact group $G$. Let $\rho : G \curvearrowright (Y,\eta)$ be a nonsingular action. The \emph{induced equivalence relation} $\cR_{\rho,c}$ is the nonsingular equivalence relation on $(X \times Y, \nu \otimes \eta)$ defined by
$$ (x,y) \sim_{\cR_{\rho, c}} (x',y') \Longleftrightarrow x \sim_{\cR} x' \text{ and } y' = \rho_{c(x',x)}(y).$$
When $\rho : G \curvearrowright G$ is the left translation action, the equivalence relation $\cR_{\rho,c}$ is simply denoted $\cR_{c}$ and it is called the \emph{skew product equivalence relation}.

\begin{lemma} \label{Radon--Nikodym induced equivalence relation}
    Let $\cR$ be a nonsingular equivalence relation on $(X,\nu)$ with Radon--Nikodym cocycle $\delta : \cR \rightarrow \R^*_+$. Let $c : \cR \rightarrow G$ be a cocycle with values in the second countable locally compact group $G$. Let $\rho : G \curvearrowright (Y,\eta)$ be a nonsingular action with Radon--Nikodym cocycle $\sigma : G \times Y \rightarrow \R^*_+$. Then the Radon--Nikodym cocycle $\omega : \cR_{\rho,c} \rightarrow \R^*_+$ of $\cR_{\rho,c}$ with respect to $\nu \otimes \eta$ is given by
    $$ \omega(x,y,x',y')=\delta(x,x') \sigma(c(x,x'),y')$$
    for all $(x,y,x',y') \in \cR_{\rho,c}$.
\end{lemma}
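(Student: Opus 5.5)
The plan is to compute the Radon--Nikodym cocycle of $\cR_{\rho,c}$ directly from its defining property: for every partial isomorphism $\phi$ whose graph lies in the relation, the Radon--Nikodym derivative $\frac{\rd (\mu\circ\phi)}{\rd\mu}$ must be given by the cocycle evaluated along the graph. Conventions matter here. I will use the convention of the paper, which is implicit in the formula for $\delta$: $\delta(x,x')$ is the derivative at $x'$ of any partial isomorphism $\phi$ of $\cR$ sending $x'$ to $x$. In the same way, $\sigma(g,y)=\frac{\rd \eta(\rho_g(y))}{\rd\eta(y)}$.

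By Feldman--Moore, $\cR$ is generated by countably many partial Borel isomorphisms $\phi : A \to B$ with $\phi(x')\sim_\cR x'$. On a countable partition of $A$ into pieces, I may assume that $c(\phi(x'),x')$ is measurable in $x'$. For each such $\phi$, define
$$\Phi : A\times Y \to B\times Y, \qquad \Phi(x',y')=\bigl(\phi(x'),\rho_{c(x',\phi(x'))}(y')\bigr).$$
By the definition of $\cR_{\rho,c}$, applied with $x=\phi(x')$, we have $(x,y)\sim(x',y')$ exactly when $y'=\rho_{c(x',x)}(y)$. Equivalently, $y=\rho_{c(x,x')}(y')$, using the cocycle identity $c(x,x')=c(x',x)^{-1}$.

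So the correct map is $\Phi(x',y')=(\phi(x'),\rho_{c(\phi(x'),x')}(y'))$. Its graph lies in $\cR_{\rho,c}$, and the countably many maps $\Phi$ obtained this way generate $\cR_{\rho,c}$, because every pair in the relation is of this form.

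Next I compute the derivative of $\Phi$ with respect to $\nu\otimes\eta$. The map is a skew product over $\phi$ with fiber maps $\rho_{g(x')}$, where $g(x')=c(\phi(x'),x')$. By Fubini, for a measurable $E\subset A\times Y$,
$$(\nu\otimes\eta)(\Phi(E))=\int_{A}\eta\bigl(\rho_{g(x')}(E_{x'})\bigr)\,\frac{\rd\nu\circ\phi}{\rd\nu}(x')\,\rd\nu(x') .$$
The fiber term can then be rewritten as
$$\eta\bigl(\rho_{g(x')}(E_{x'})\bigr)=\int_{E_{x'}}\sigma(g(x'),y')\,\rd\eta(y').$$
Hence the derivative of $\Phi$ at $(x',y')$ equals $\delta(\phi(x'),x')\,\sigma(c(\phi(x'),x'),y')$. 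Writing $x=\phi(x')$, this is $\delta(x,x')\sigma(c(x,x'),y')$, which is the claimed formula.

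The only genuinely delicate step is bookkeeping. One must check that the ordering conventions, namely which of $c(x,x')$ or $c(x',x)$ appears and at which point $\sigma$ is evaluated, match the definitions above. One must also handle the measurability of $x'\mapsto c(\phi(x'),x')$ and the use of Fubini for the skew product. Both are standard, since $c$ is Borel on $\cR$ and the action map $(g,y)\mapsto\rho_g(y)$ is jointly measurable.

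Finally, the cocycle identity for $\omega$ follows from those of $\delta$ and $\sigma$. Uniqueness of the Radon--Nikodym cocycle then follows because it is determined almost everywhere by its values on a generating family of partial isomorphisms.
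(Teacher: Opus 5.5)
Your argument is correct, but it is packaged differently from the paper's. The paper never chooses partial isomorphisms. It uses the integral characterization of $\delta$, namely $\int_X \sum_{x \sim x'} f(x,x')\delta(x,x')\,\rd\nu(x') = \int_X \sum_{x'\sim x} f(x,x')\,\rd\nu(x)$ for all positive $f$, and checks the analogous identity for $\cR_{\rho,c}$ in one chain of equalities. It unfolds the relation, substitutes $y=\rho_{c(x,x')}(y')$ in the $Y$-integral (this produces the factor $\sigma(c(x,x'),y')$), and then applies the defining identity of $\delta$ to move the integration from $x$ to $x'$.

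You instead use the pseudogroup characterization. You lift Feldman--Moore partial isomorphisms $\phi$ of $\cR$ to skew-product maps $\Phi(x',y')=(\phi(x'),\rho_{c(\phi(x'),x')}(y'))$, whose graphs cover $\cR_{\rho,c}$, and compute $\frac{\rd(\nu\otimes\eta)\circ\Phi}{\rd(\nu\otimes\eta)}$ by Fubini. Both proofs rest on the same two ingredients: a fiberwise change of variables giving $\sigma$, and a base derivative giving $\delta$. The paper's version is shorter and choice-free. Yours is more concrete, and it shows along the way that $\cR_{\rho,c}$ is nonsingular for $\nu\otimes\eta$. Your convention, that $\delta(x,x')$ is the derivative at $x'$ of a map sending $x'$ to $x$, is exactly what the paper's integral identity encodes.

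In a write-up, delete your first formula for $\Phi$ (the one with $c(x',\phi(x'))$), which is wrong. Also drop the partition of $A$: $x'\mapsto c(\phi(x'),x')$ is already measurable, being $c$ composed with the Borel map $x'\mapsto(\phi(x'),x')\in\cR$.
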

\begin{proof}
    The Radon--Nikodym cocycle $\delta$ is characterized by
    $$ \int_X \sum_{x \sim_{\cR} x'} f(x,x') \delta(x,x') \: \rd \nu(x') = \int_X \sum_{x' \sim_{\cR} x} f(x,x') \: \rd \nu(x)$$
    for every positive measurable function $f$ on $X \times X$.

    For every positive measurable function $f$ on $X \times Y \times X \times Y$, we have 
    $$ \int_{X \times Y} \sum_{(x,y) \sim_{\cR_{\rho,c}} (x',y')} f(x,y,x',y') \omega(x,y,x',y') \: \rd \nu(x') \rd \eta(y') =$$
    $$\int_{X\times Y} \sum_{(x',y') \sim_{\cR_{\rho,c}} (x,y)} f(x,y,x',y') \: \rd \nu(x) \rd \eta(y)=$$
    $$\int_{X\times Y} \sum_{x' \sim_\cR x} f(x,y,x',\rho_{c(x',x)}(y)) \: \rd \nu(x) \rd \eta(y)=$$

    $$\int_{X\times Y} \sum_{x' \sim_\cR x} f(x,\rho_{c(x,x')}(y'),x',y') \sigma(c(x,x'),y') \: \rd \nu(x) \rd \eta(y')=$$
    $$\int_{X\times Y} \sum_{x \sim_\cR x'} f(x,\rho_{c(x,x')}(y'),x',y') \sigma(c(x,x'),y') \delta(x,x') \: \rd \nu(x') \rd \eta(y')=$$
    $$\int_{X\times Y} \sum_{(x,y) \sim_{\cR_{\rho,c}} (x',y')} f(x,y,x',y') \sigma(c(x,x'),y') \delta(x,x') \: \rd \nu(x') \rd \eta(y').$$
    This shows the desired equality.
\end{proof}

Let $\cR$ be an ergodic nonsingular equivalence relation on $(X,\nu)$. Let $c : \cR \rightarrow G$ be a cocycle with values in a second countable locally compact group $G$. The right translation action $\theta : G \curvearrowright X \times G$ given by $\theta_g(x,h)=(x,hg^{-1})$ preserves the skew product equivalence relation $\cR_{c}$. Therefore, $\theta$ induces a $G$-action on the space of ergodic components $(X \times G)/{\cR_c}$. This $G$-action is called the \emph{Mackey range} of $c$.

On the functional level, the Mackey range is simply the restriction of the right translation action $\theta : G \curvearrowright \rL^\infty(X \times G)$ to the algebra of $\cR_c$-invariant functions $\rL^\infty(X \times G)^{\cR_c}$. 

\begin{proposition} \label{ergodic skew product}
    Let $\cR$ be an ergodic nonsingular equivalence relation on $(X,\nu)$. Let $c : \cR \rightarrow G$ be a cocycle with values in a second countable locally compact group $G$. The cocycle $c$ is ergodic if and only if its Mackey range is trivial or equivalently if and only if $\cR_c$ is ergodic.
\end{proposition}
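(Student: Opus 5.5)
The two equivalences are proved separately. That ``trivial Mackey range'' is equivalent to ``$\cR_c$ ergodic'' is immediate from the definition: the Mackey range is the restriction of $\theta$ to $\rL^\infty(X\times G)^{\cR_c}$, so it is the trivial action on a point exactly when this algebra is $\C 1$. (Triviality of a $G$-action on a nonsingular space means the space is a single point, up to null sets.) The real content is therefore: $c$ is ergodic if and only if $\cR_c$ is ergodic.

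\emph{$\cR_c$ ergodic $\Rightarrow$ $c$ ergodic.} Suppose $c$ is not ergodic. Then there are a nonnull $Y\subset X$ and a nonnull Borel set $B\subset G$ such that $c(x,x')\notin B$ for a.e.\ $(x,x')\in\cR_Y$. Shrinking $B$, we may choose open neighbourhoods $U,V$ of $e$ with $V^{-1}BU$ still disjoint from the essential image of $c|_{\cR_Y}$; this uses that the complement of the essential image is open, so we take $B$ to be a small open set. Consider $E=\{(x,h): x\in Y,\ h\in V\}\subset X\times G$ and its $\cR_c$-saturation. We show that the saturation misses the nonnull set $Y\times BU$, which contradicts ergodicity. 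Indeed, if $(x,h)\sim_{\cR_c}(x',h')$ with $x,x'\in Y$, then $h'=c(x',x)h$, so $h'\in BU$ would force $c(x',x)\in BUV^{-1}$. Choosing $B,U,V$ so that this set is disjoint from the essential image gives the contradiction. This direction is routine apart from the bookkeeping with null sets.

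\emph{$c$ ergodic $\Rightarrow$ $\cR_c$ ergodic.} This is the main step. Let $F\in \rL^\infty(X\times G)^{\cR_c}$, so that $F(x',c(x',x)h)=F(x,h)$. Fix a point $(x_0,g_0)$ of density for $F$ on $X\times G$ together with a value; the aim is to show that $F$ is essentially constant. A cleaner route is to show that $F$ is invariant under right translations by every $g\in G$. Write $F_g=F\circ\theta_g$ and note that $F_g$ is again $\cR_c$-invariant. Then $\cR_c$-invariance combined with the ergodicity of $c$ says: for any nonnull $Y$ and any $g$, there exist $(x,x')\in\cR_Y$ with $c(x',x)$ arbitrarily close to $g$. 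Consequently the $\cR_c$-saturation of any nonnull box $Y\times W$ contains, near every $g$, translates of $W$. More concretely, one argues that for each $g$, each $\epsilon>0$ and each nonnull $Y$ on which $F(\cdot,h)$ is nearly constant for $h$ in a small $W$, there is a partial isomorphism $\phi$ of $\cR_Y$ with $c(\phi(x),x)\in gU$ on a nonnull set. This yields $|F_{g^{-1}}-F|$ small on a nonnull set. Hence for every $g$ the invariant function $F_{g}-F$ cannot be bounded away from $0$, and a standard maximality/exhaustion argument over countably many $g$ in a dense set, together with continuity of translation in $\rL^1_{loc}$, shows $F_g=F$ for all $g$. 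Then $F(x,h)=f(x)$, and $\cR$-invariance of $f$ together with ergodicity of $\cR$ gives that $F$ is constant.

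The expected obstacle is this last direction: turning the pointwise information about essential values on small sets into a global translation invariance. I would try first to work with the ``essential values'' set $\{g: \forall Y,\forall U\ni g,\ \nu(\{x\in Y: \exists x'\in Y, c(x,x')\in U\})>0\}$, as in Schmidt's theory. I would show that it equals the set of $g$ such that $\theta_g$ acts trivially on $\rL^\infty(X\times G)^{\cR_c}$, mimicking Schmidt's proof that essential values form a closed subgroup which is the kernel of the Mackey range action.
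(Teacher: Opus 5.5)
The paper states this proposition without proof, as a standard fact going back to Schmidt and Golodets--Sinelshchikov, so your argument has to stand on its own. Your treatment of ``trivial Mackey range $\Leftrightarrow$ $\cR_c$ ergodic'' is correct. So is your contrapositive proof that ergodicity of $\cR_c$ forces $c$ to be ergodic: the saturation of $Y\times V$ misses $Y\times B_0U$.

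The converse is the real content, and there your sketch does not constitute a proof. There are three concrete problems.

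\emph{Regularity.} You work with $F$ itself, which is only measurable in the $G$-variable. There need not be any nonnull $Y$ and open $W$ on which $F(\cdot,h)$ is ``nearly constant'' in a pointwise sense. Moreover, $F(x',c(x',x)h)=F(x,h)$ holds only almost everywhere, so you cannot evaluate it at the specific heights produced by your partial isomorphism $\phi$, nor replace $c(\phi(x),x)h$ by $gh$.

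\emph{Logic.} Knowing that the invariant function $F_g-F$ is small on \emph{some} nonnull set, or is not bounded away from $0$, does not give $F_g=F$. The ``maximality/exhaustion argument over a dense set'' does not fill this gap. The set of $g$ with $F_g=F$ is a closed subgroup, so a dense set would suffice, but you have not established the identity for a single $g$.

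\emph{Nonabelian $G$.} The cocycle acts on the left while $\theta$ acts on the right. Having $c(x',x)\approx g$ at height $h$ compares $F$ with its right translate by $h^{-1}gh$, not by $g$. Your fallback plan is actually false in this generality. If $c$ takes values in a closed non-normal subgroup $H$ and is ergodic as an $H$-valued cocycle, its essential values form $H$. But $\rL^\infty(X\times G)^{\cR_c}\cong\rL^\infty(H\backslash G)$, and the kernel of the right translation action is the normal core $\bigcap_{k\in G}k^{-1}Hk$. This matters, since the paper uses the proposition for arbitrary $G$.

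The missing ingredients are a regularization and a \emph{localized} contradiction.

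\emph{Regularization.} For $f\in\rC_c(G)$, put $F_f(x,h)=\int_G F(x,hk)f(k)\,\rd m_G(k)$. It is still $\cR_c$-invariant, because right convolution commutes with the left multiplication defining $\cR_c$. The invariance now holds for a.e.\ $x$ and \emph{every} $h$. Moreover, $|F_f(x,hv)-F_f(x,h)|\to 0$ as $v\to e$, uniformly in $(x,h)$.

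\emph{Localized contradiction.} Fix $g$ and suppose $F_f(x,hg)-F_f(x,h)$ is not a.e.\ zero. After multiplying by a phase, there are $\epsilon>0$, $h_0\in G$ and a nonnull $S\subset X$ with $\mathrm{Re}\bigl(F_f(x,h_0g)-F_f(x,h_0)\bigr)>\epsilon$ on $S$. Choose a nonnull $Y\subset S$ on which $F_f(\cdot,h_0)$ and $F_f(\cdot,h_0g)$ are within $\delta$ of constants $c_1,c_2$. Then $\mathrm{Re}(c_2-c_1)>\epsilon-2\delta$. Since $h_0gh_0^{-1}$ lies in the essential image of $c|_{\cR_Y}$, there are $x,x'\in Y$ with $c(x',x)=h_0gh_0^{-1}u$ and $u$ close to $e$. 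Then $c_1\approx F_f(x,h_0)=F_f(x',h_0g\,h_0^{-1}uh_0)\approx F_f(x',h_0g)\approx c_2$. This is a contradiction once $\delta\leq\epsilon/5$.

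\emph{Conclusion.} Hence $F_f$ is invariant under all right translations, so $F_f(x,h)=\psi(x)$ with $\psi$ $\cR$-invariant, hence constant. Letting $f$ run through an approximate identity shows that $F$ is constant.
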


\begin{example}
Let $\cR$ be a nonsingular equivalence relation on $(X,\nu)$ with its Radon--Nikodym cocycle $\delta : \cR \rightarrow \R^*_+$. Then we see from Lemma \ref{Radon--Nikodym induced equivalence relation} that the equivalence relation $\cR_{\delta}$ preserves the measure $\nu \otimes m$ on $X \times \R^*_+$, where $m$ is the measure on $\R^*_+$ defined by $\rd m(\lambda) = \lambda^{- 2}\rd \lambda$. The equivalence relation $\cR_\delta$ is called the \emph{Maharam extension} of $\cR$.

The action $\theta : \R^*_+ \curvearrowright X \times \R^*_+$ given by $\theta_\lambda(x,s)=(x,s\lambda^{-1})$ scales the measure $\nu \otimes m$ in the sense that $(\nu \otimes m)(\theta_\lambda(A))=\lambda (\nu \otimes m)(A)$ for all $A \subset X \times \R^*_+$ and all $\lambda \in \R^*_+$.

The Mackey range of $\delta$ is called the \emph{Krieger flow} of $\cR$. It is the $\R^*_+$-action induced by $\theta$ on the ergodic components of the Maharam extension $\cR_\delta$.

We say that an ergodic equivalence relation $\cR$ is of type $\III_1$ if its Radon--Nikodym cocycle $\delta$ is ergodic or equivalently if its Maharam extension $\cR_\delta$ is ergodic, or in other words, if the Krieger flow of $\cR$ is trivial.
\end{example}
We end this section with the following useful ergodicity lemma.

\begin{lemma} \label{dense range induced equivalence relation}
    Let $\cR$ be an ergodic nonsingular equivalence relation on $(X,\nu)$. Let $c : \cR \rightarrow G$ be a cocycle with values in a second countable locally compact group $G$. Let $\rho : G \curvearrowright (Y,\eta)$ be a nonsingular action. If $c$ is ergodic, then $\rL^\infty(X \times Y, \nu \otimes \eta)^{\cR_{\rho,c}} = \rL^\infty(Y)^{\rho}$.
\end{lemma}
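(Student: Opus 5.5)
The plan is to realize the induced relation $\cR_{\rho,c}$ as a quotient of the skew product $\cR_c$ and reduce to the ergodicity of $\cR_c$ given by Proposition \ref{ergodic skew product}. The inclusion $\rL^\infty(Y)^\rho \subset \rL^\infty(X\times Y)^{\cR_{\rho,c}}$ is immediate: if $F(x,y)=f(y)$ with $f$ $\rho$-invariant, then $F(x',\rho_{c(x',x)}(y)) = f(y) = F(x,y)$. So everything rests on the reverse inclusion.

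For the reverse inclusion, let $F \in \rL^\infty(X\times Y)^{\cR_{\rho,c}}$ and define $H \in \rL^\infty(X\times G\times Y)$ by
$$H(x,g,y)=F(x,\rho_{g^{-1}}(y)).$$
This is measurable because the action map is measurable, and it is well defined almost everywhere since $\rho$ is nonsingular. I would then check two invariances of $H$.

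First, $H$ is invariant under the relation $(x,g,y)\sim(x',c(x',x)g,y)$ for $x\sim_\cR x'$. Indeed,
$$H(x',c(x',x)g,y)=F\bigl(x',\rho_{c(x',x)}\rho_{g^{-1}}(y)\bigr)=F(x,\rho_{g^{-1}}(y))=H(x,g,y)$$
by the $\cR_{\rho,c}$-invariance of $F$. This is exactly invariance under $\cR_c\times \id_Y$, with the skew product written in the left-multiplication convention $(x,g)\sim(x',c(x',x)g)$ matching the definition of $\cR_{\rho,c}$ with $\rho$ the left translation.

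Second, $H$ is invariant under the diagonal transformation $(x,g,y)\mapsto(x,gh^{-1},\rho_{h^{-1}}(y))$: we have $H(x,gh^{-1},\rho_{h^{-1}}(y))=F(x,\rho_{hg^{-1}}\rho_{h^{-1}}(y))=H(x,g,y)$.

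The key step is to use ergodicity of $\cR_c$, which holds by Proposition \ref{ergodic skew product} since $c$ is ergodic. The point is that the fixed points of $\cR_c\times\id_Y$ in $\rL^\infty(X\times G)\ovt\rL^\infty(Y)$ are $\rL^\infty(X\times G)^{\cR_c}\ovt\rL^\infty(Y)=1\otimes\rL^\infty(Y)$. I expect this slicing argument to be the main technical point, though it is standard: apply normal functionals $\omega\in\rL^1(Y)$ via slice maps, which commute with the $\cR_c$-action. Countability of the relation lets one use a countable family of partial transformations generating $\cR_c$, and each slice is then $\cR_c$-invariant, hence constant. For this I would work with a probability measure equivalent to the Haar measure on $G$, so that $\rL^\infty(X\times G)$ is a genuine $\sigma$-finite measure space algebra.

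Hence $H(x,g,y)=f(y)$ for some $f\in\rL^\infty(Y)$. The second invariance then gives $f(\rho_{h^{-1}}(y))=f(y)$ for a.e.\ $h$, so $f$ is $\rho$-invariant. A Fubini argument together with continuity of the action of $G$ on $\rL^\infty(Y)$ upgrades this to all $h$. Finally, taking $g=e$ on a conull set, or rather averaging over $g$ to avoid evaluating on a null set, yields $F(x,y)=H(x,e,y)=f(y)$. More precisely, $F(x,\rho_{g^{-1}}(y))=f(y)=f(\rho_{g^{-1}}(y))$ for a.e.\ $(x,g,y)$, and nonsingularity of $\rho_{g^{-1}}$ for a.e.\ $g$ gives $F=1\otimes f$, which proves the reverse inclusion.
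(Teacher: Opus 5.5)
Your strategy matches the paper's: pull $F$ back to $X\times G\times Y$ along the action map, obtain invariance under $\cR_c\times\Delta_Y$, and use ergodicity of $\cR_c$ from Proposition~\ref{ergodic skew product}. As written, however, both of your invariance checks mishandle inverses.

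Since $(c(x',x)g)^{-1}=g^{-1}c(x',x)^{-1}$, your definition gives $H(x',c(x',x)g,y)=F\bigl(x',\rho_{g^{-1}}\rho_{c(x',x)^{-1}}(y)\bigr)$, not $F\bigl(x',\rho_{c(x',x)}\rho_{g^{-1}}(y)\bigr)$. The $\cR_{\rho,c}$-invariance of $F$ only lets you strip an outermost $\rho_{c(x',x)}$, so it says nothing about this expression. The mismatch is present even for abelian $G$. Likewise $H(x,gh^{-1},\rho_{h^{-1}}(y))=F(x,\rho_{hg^{-1}h^{-1}}(y))$, and you can identify this with $H(x,g,y)$ only when $g$ and $h$ commute. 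So the invariance of $H$ that your key step relies on is not established by the computations you give.

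The repair is immediate. Define $H(x,g,y)=F(x,\rho_g(y))$, which is exactly the paper's $f\circ\varphi$ with $\varphi(x,g,y)=(x,\rho_g(y))$. Then $H(x',c(x',x)g,y)=F\bigl(x',\rho_{c(x',x)}(\rho_g(y))\bigr)=F(x,\rho_g(y))=H(x,g,y)$, which is your intended computation. The diagonal invariance then holds for $(x,g,y)\mapsto(x,gh^{-1},\rho_h(y))$.

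Alternatively, keep $\rho_{g^{-1}}$ and note that your $H$ is invariant under $(x,g)\sim(x',g\,c(x',x)^{-1})$ and under $(x,g,y)\mapsto(x,hg,\rho_h(y))$. The former relation is conjugate to $\cR_c$ by $g\mapsto g^{-1}$, hence ergodic.

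With this correction the rest of your argument is sound:
\begin{enumerate}
\item the slice-map identification of the $\cR_c\times\Delta_Y$-fixed points with $1\otimes\rL^\infty(Y)$;
\item the upgrade from almost every $h$ to all $h$;
\item the Fubini and nonsingularity step recovering $F=1\otimes f$.
\end{enumerate}
These steps supply the details behind the paper's one-line conclusion that $f$ depends only on the second variable and is $\rho$-invariant.
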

\begin{proof}
    Let $\varphi : X \times G \times Y \rightarrow X \times Y$ be the map defined by $\varphi(x,g,y)=(x,\rho_g(y))$. Take $f \in \rL^\infty(X \times Y, \nu \otimes \eta)^{\cR_{\rho,c}}$. Then $$f \circ \varphi \in \rL^\infty(X \times G \times Y, \nu \otimes m_G \otimes \eta)^{\cR_{c} \times \Delta_Y}$$
    where $\cR_c$ is the skew product equivalence relation on $X \times G$ and $\Delta_Y$ is the diagonal (or trivial) equivalence relation on $Y$. Since $c$ is ergodic, then $\cR_c$ is ergodic by Proposition \ref{ergodic skew product}, hence $f \circ \varphi$ depends only on the third variable. We conclude that $f$ depends only on the second variable and is $\rho$-invariant.
\end{proof}


\section{Cohomology of amenable ergodic equivalence relations}

Our goal in this section is to prove Theorem \ref{ergodic kernel dense subgroup}. Observe that Theorem \ref{ergodic kernel dense subgroup} is somehow a combination of the following two lemmas.

\begin{lemma}[{\cite[Lemma 1.6]{GS91}}] \label{ergodic kernel}
Let $\cR$ be an ergodic nonsingular equivalence relation and let $c : \cR \rightarrow G$ be a cocycle with values in a second countable locally compact group $G$. 

Assume that $c$ is ergodic. Then $c$ is cohomologous to a cocycle $d : \cR \rightarrow G$ such that the subequivalence relation $\ker(d) < \cR$ is ergodic.
\end{lemma}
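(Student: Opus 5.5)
This is Golodets–Sinelshchikov's lemma, and I would prove it directly by a cohomology construction.

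Since $c$ is ergodic, $\cR_c$ is ergodic on $X \times G$ by Proposition~\ref{ergodic skew product}. The aim is to find a measurable map $\phi : X \to G$ such that $d(x,x') = \phi(x)^{-1} c(x,x') \phi(x')$ has ergodic kernel. A pair $(x,x')$ lies in $\ker(d)$ exactly when $(x,\phi(x))$ and $(x',\phi(x'))$ are $\cR_c$-equivalent. Indeed, in $\cR_c$ we have $(x,g)\sim(x',g')$ iff $g' = c(x',x) g$ (with the appropriate convention), so the condition reads $\phi(x') = c(x',x)\phi(x)$, i.e.\ $d(x',x)=e$. Thus $\ker(d)$ is isomorphic, via $x \mapsto (x,\phi(x))$, to the restriction of $\cR_c$ to the graph $\Gamma_\phi = \{(x,\phi(x))\}$.

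The problem therefore reduces to choosing a measurable section $\phi$ whose graph $\Gamma_\phi$ is a "good transversal" for $\cR_c$. By this I mean that the restriction of $\cR_c$ to $\Gamma_\phi$ is ergodic, where $\Gamma_\phi$ carries the pushforward of $\nu$. The graph is a null set in $X\times G$ when $G$ is nondiscrete, so ergodicity of $\cR_c$ does not pass to it automatically. To build $\phi$, I would use the standard technique for countable ergodic relations: pick a countable family $(A_n)$ of subsets of $X$ generating the measure algebra. I would then construct $\phi$ as a limit of successive modifications $\phi_k$, each making more pairs of sets $A_n$, $A_m$ "communicate" through $\ker(d_k)$. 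At stage $k$, ergodicity of $\cR_c$ lets us connect a positive-measure part of $A_n\times U$ to $A_m \times U$ for a small neighbourhood $U$ of the current values. Using this, one redefines $\phi$ on a piece so that $c(x',x)\phi(x) = \phi(x')$ holds for a partial isomorphism in $\cR$ from a subset of $A_n$ into $A_m$.

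An alternative, cleaner route uses the Mackey range. Ergodicity of $c$ means the Mackey range is trivial, and one applies the construction of a cocycle with prescribed kernel. Concretely, choose a subset $W \subset X \times G$ of the form $\{(x,g) : g \in \phi(x) V\}$, with $V$ a compact neighbourhood, so that $W$ has positive finite measure. The restriction $(\cR_c)_W$ is ergodic and countable. Then take a measurable selection that collapses $V$, using a Borel cross section of the orbit equivalence relation restricted to fibers.

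The main obstacle is the convergence and measurability of the inductive modifications. Changing $\phi$ on later stages must not destroy the communications already established in $\ker(d_k)$. I would handle this by performing each modification only on sets of measure $<2^{-k}$ and by composing with elements of the full group so that the earlier relations are transported along, ensuring $\phi_k\to\phi$ almost everywhere. The ergodicity of the limiting kernel then follows because every pair $A_n, A_m$ is eventually connected. Since this is precisely \cite[Lemma 1.6]{GS91}, in the paper I would simply cite it.
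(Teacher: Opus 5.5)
The paper does not prove this lemma. It quotes it from \cite[Lemma 1.6]{GS91} and uses it as a black box, so your decision to cite it is exactly what the paper does. Your reformulation, that $\ker(d)$ is identified via $x\mapsto(x,\phi(x))$ with the restriction of $\cR_c$ to the graph $\Gamma_\phi$, is also correct and is the right way to read the statement. The self-contained argument you sketch, however, is not a proof as written, and you should not present it as one.

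\textbf{The ``cleaner route'' does not work.} Ergodicity of $(\cR_c)_W$ on a nonnull set $W\subset X\times G$ does not pass to the restriction to a graph, which is null when $G$ is nondiscrete (as you yourself note). Two distinct points of a fibre $\{x\}\times\phi(x)V$ are never $\cR_c$-equivalent, since $(x,g)\sim(x,g')$ forces $g'=c(x,x)g=g$. So there is nothing to ``collapse''. An ergodic countable nonsingular relation on a nonatomic space also has no measurable cross section. Choosing a graph on which $\cR_c$ stays ergodic is the entire content of the lemma.

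\textbf{The inductive route ends with a false criterion.} Knowing that, for every pair $A_n,A_m$ in a countable generating family, $\ker(d)$ sends some nonnull piece of $A_n$ into $A_m$ does not give ergodicity. Take any countable family of nonnull sets $(A_n)$. A Baire category argument in the measure algebra gives a set $E$ with $0<\nu(A_n\cap E)<\nu(A_n)$ for all $n$. Any subrelation that is ergodic on $E$ and on $X\setminus E$ separately then connects every $A_n$ to every $A_m$, yet it is not ergodic.

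What you need is a quantitative exhaustion. For example, build $\ker(d)\supset\bigcup_k\cS_k$ with $\cS_k$ finite and increasing, and arrange that $\rE_\nu(1_{A_n}\mid\mathcal F_k)\to\nu(A_n)$ in $\rL^1(\nu)$ for a dense sequence $(A_n)$. Here $\mathcal F_k$ is the $\sigma$-algebra of $\cS_k$-invariant sets. The reverse martingale theorem then shows that the invariant $\sigma$-algebra of $\bigcup_k\cS_k$ is trivial.

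\textbf{Preserving earlier connections also works differently from what you describe.} Write the correction as $\psi_k=\phi_k^{-1}\phi_{k+1}$. Then $d_{k+1}(x,x')=\psi_k(x)^{-1}d_k(x,x')\psi_k(x')$, so keeping $\cS_k$ inside the kernel forces $\psi_k$ to be constant on $\cS_k$-classes. It is this invariance, together with $\psi_k\to e$ fast enough for $(\phi_k)$ to converge, that preserves earlier connections, not the smallness of the supports. Small supports are in general incompatible with this invariance, because saturations of small sets under a finite nonsingular relation need not be small. Organising such an exhaustion is where the actual work in \cite{GS91} lies.
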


\begin{lemma}[{\cite[Proposition 1.2]{GS91}}] \label{dense subgroup}
Let $\cR$ be an ergodic nonsingular equivalence relation and let $c : \cR \rightarrow G$ be a cocycle with values in a second countable locally compact group $G$. 

Assume that $\cR$ is amenable and let $\Lambda < G$ be a dense countable subgroup. Then $c$ is cohomologous to a cocycle $d : \cR \rightarrow G$ with values in $\Lambda$.
\end{lemma}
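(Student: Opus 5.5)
The plan is to use the hyperfiniteness of $\cR$ and build the transfer function as a convergent infinite product of corrections. By Connes--Feldman--Weiss \cite{CFW81}, amenability of $\cR$ means we can write $\cR = \bigcup_n \cR_n$ up to a null set, as an increasing union of finite Borel subequivalence relations $\cR_1 \subset \cR_2 \subset \cdots$. We then construct inductively Borel maps $\varphi_n : X \to G$. Writing $c_n(x,y) = \varphi_n(x) c(x,y) \varphi_n(y)^{-1}$ for the cohomologous cocycle, we require two things: $c_n$ takes values in $\Lambda$ on $\cR_n$, and $\varphi_{n+1} = \psi_{n+1}\varphi_n$ with $\psi_{n+1}$ constant on $\cR_n$-classes and very close to $e$. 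The property that $\psi_{n+1}$ is constant on $\cR_n$-classes is what makes the construction stable. Indeed, $c_{n+1}(x,y) = \psi_{n+1}(x) c_n(x,y)\psi_{n+1}(y)^{-1} = c_n(x,y)$ whenever $(x,y) \in \cR_n$, so values already placed in $\Lambda$ are never destroyed.

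For the base step, choose a Borel selector $s$ for the finite relation $\cR_1$ and set $\varphi_1(x) = c(s(x),x)$. The cocycle identity then gives $c_1 \equiv e$ on $\cR_1$. For the inductive step, fix a Borel selector picking, inside each $\cR_{n+1}$-class, a base $\cR_n$-class $C_0$ with a base point $x_0$. For any other $\cR_n$-class $C$ inside the same $\cR_{n+1}$-class, pick a base point $x_C$ and let $g_C = c_n(x_0,x_C)$. Since $c_n$ is $\Lambda$-valued inside $C_0$ and inside $C$, changing base points alters $g_C$ only by multiplication by elements of $\Lambda$ on both sides. Fix an enumeration $\Lambda = \{\lambda_1,\lambda_2,\dots\}$. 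By density, we take $\lambda_C$ to be the first $\lambda_k$ with $\psi_C := \lambda_C^{-1} g_C$ within a prescribed tolerance $\varepsilon_n(x)$ of $e$; this choice is Borel. Define $\psi_{n+1} = e$ on $C_0$ and $\psi_{n+1} = \psi_C^{-1}$ on $C$. Then $c_{n+1}(x_0,x_C) = g_C\psi_C = \lambda_C \in \Lambda$, and since $c_{n+1}$ is $\Lambda$-valued inside each $\cR_n$-class, the cocycle identity makes $c_{n+1}$ $\Lambda$-valued on all of $\cR_{n+1}$.

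The main obstacle is convergence of $\varphi_n = \psi_n\cdots\psi_1$ in $G$. Fix compatible left- and right-invariant metrics $d_L,d_R$; their sum is complete because $G$ is locally compact, hence Polish. First, $d_R(\varphi_{n+1},\varphi_n) = d_R(\psi_{n+1},e)$ is small directly. Second, $d_L(\varphi_{n+1},\varphi_n) = d_L(\varphi_n^{-1}\psi_{n+1}\varphi_n, e)$ involves conjugation by the unbounded function $\varphi_n$. By continuity of conjugation, we therefore choose the tolerance $\varepsilon_n(x)$ to depend Borel-measurably on $\varphi_n(x)$, and to be constant on $\cR_n$-classes, for instance by taking the minimum over the finite class. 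We choose it so that both distances are at most $2^{-n}$. Then $\varphi_n(x)$ is Cauchy for $d_L+d_R$ and converges pointwise to a Borel map $\varphi$.

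Finally, set $d(x,y) = \varphi(x)c(x,y)\varphi(y)^{-1} = \lim_m c_m(x,y)$. For $(x,y) \in \cR_n$ the sequence $c_m(x,y)$ is constant for $m \ge n$ by the stability property above. Hence $d = c_n \in \Lambda$ on each $\cR_n$, and therefore $d$ is $\Lambda$-valued on $\cR$ and cohomologous to $c$.
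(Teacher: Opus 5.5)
\textbf{Gap: the stability claim fails for nonabelian $G$.} The paper does not prove this lemma; it quotes \cite[Proposition 1.2]{GS91}. Your strategy is the right kind of argument: use hyperfiniteness and apply successive small corrections that freeze the values already placed in $\Lambda$. As written, however, it only works when $G$ is abelian. The statement is for arbitrary second countable locally compact $G$, and the paper uses it for nonabelian amenable $G$ in the measure preserving model construction.

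If $\psi_{n+1}$ takes a constant value $\psi$ on an $\cR_n$-class, then for $(x,y)$ in that class
\[
c_{n+1}(x,y)=\psi\, c_n(x,y)\,\psi^{-1}.
\]
This equals $c_n(x,y)$ only if $\psi$ commutes with it, and it need not lie in $\Lambda$ at all. A dense countable subgroup is typically not normalized by elements near $e$; in a connected nonabelian group, a countable normal subgroup is central. The failure breaks two steps of your proof:
\begin{itemize}
\item the inductive step, where you use that $c_{n+1}$ is $\Lambda$-valued inside each $\cR_n$-class to propagate to $\cR_{n+1}$;
\item the conclusion $d=c_n$ on $\cR_n$, since the limit is really $\Psi\,c_n(x,y)\,\Psi^{-1}$ for the accumulated correction $\Psi$.
\end{itemize}

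\textbf{Repair.} Make the correction $c_n$-equivariant instead of constant on $\cR_n$-classes. For $x$ in an $\cR_n$-class $C$ with base point $x_C$, set
\[
\psi_{n+1}(x)=c_n(x,x_C)\,\psi_C\,c_n(x,x_C)^{-1},
\]
with $\psi_{C_0}=e$. The cocycle identity then gives exactly $c_{n+1}=c_n$ on $\cR_n$. With $\psi_C=\lambda_C^{-1}g_C$ you also get $c_{n+1}(x_0,x_C)=g_C\psi_C^{-1}=\lambda_C$.

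The smallness condition must now be imposed on the finitely many conjugates $c_n(x,x_C)\psi_C c_n(x,x_C)^{-1}$, $x\in C$. By continuity of conjugation and density of $\Lambda g_C$, you can still choose $\lambda_C$ Borel-measurably as the first $\lambda_k$ satisfying these finitely many strict inequalities.

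\textbf{Two smaller points.}
\begin{itemize}
\item Your own formulas contain a slip. With $\psi_C=\lambda_C^{-1}g_C$ and $\psi_{n+1}=\psi_C^{-1}$ on $C$, one gets $c_{n+1}(x_0,x_C)=g_C\lambda_C^{-1}g_C$, not $\lambda_C$. You want $\psi_{n+1}(x_C)=\psi_C$, as above.
\item The $d_L$ estimate, and hence the dependence of the tolerance on $\varphi_n$, is unnecessary. A locally compact group is complete for its right uniformity, so $d_R(\psi_{n+1}(x),e)\le 2^{-n}$ already forces $\varphi_n(x)$ to converge.
\end{itemize}
With these changes, your argument proves the lemma in full generality.
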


Unfortunately, we do not know how to prove Theorem \ref{ergodic kernel dense subgroup} by combining the inductive constructions of Lemma \ref{ergodic kernel} and Lemma \ref{dense subgroup} in a single construction. In fact, we observe that the assumption that $\Lambda$ is amenable is essential in Theorem \ref{ergodic kernel dense subgroup} while it is not in Lemma \ref{dense subgroup}.

Instead of a direct proof based on an inductive construction, we will prove Theorem \ref{ergodic kernel dense subgroup} by using the full power of the classification of cocycles on amenable equivalence relations obtained in \cite{GS91}. Let us recall this classification theorem.

Let $\cR$ be a nonsingular equivalence relation on $(X,\nu)$, $G$ a second countable locally compact group and $c, d :  \cR \rightarrow G$ two measurable cocycles. 
We say that $c$ and $d$ are {\em weakly equivalent} if there exists $\theta \in\Aut(\mathcal R)$ such that $c$ and $d \circ (\theta \times \theta)$ are cohomologous.

\begin{theorem}[{\cite[Theorem 3.1]{GS91}}] \label{classification of cocycles}
Let $\cR$ be an ergodic amenable nonsingular equivalence relation on a standard probability space $(X, \nu)$. Let $\omega : \cR \rightarrow \R^*_+$ be the Radon--Nikodym cocycle associated to $\nu$. Let $c, d : \cR \rightarrow G$ be two ergodic cocycles with values in a second countable locally compact group $G$. 

Then $c$ and $d$ are weakly equivalent if and only if the Mackey ranges of the cocycles $c \times \omega, d \times \omega : \cR \rightarrow G \times \R^*_+$ are conjugate.
\end{theorem}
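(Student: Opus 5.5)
Since this is a cited result from \cite{GS91}, I only sketch how I would reprove it. The plan is to reduce everything to a classification statement for cocycles with values in the enlarged group $H = G \times \R^*_+$, and to use $H$-valued cocycles throughout.

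\emph{Necessity.} Suppose first that $c$ and $d \circ (\theta \times \theta)$ are cohomologous via a measurable map $f : X \to G$, for some $\theta \in \Aut(\cR)$. Then $\omega \circ (\theta\times\theta)$ is the Radon--Nikodym cocycle of $\theta^{-1}_*\nu$. It is therefore cohomologous to $\omega$ via $x \mapsto \frac{\rd \theta^{-1}_*\nu}{\rd\nu}(x)$. Consequently $c \times \omega$ and $(d\times\omega)\circ(\theta\times\theta)$ are cohomologous as $H$-valued cocycles. A cohomology given by $F : X \to H$ induces the isomorphism $(x,h) \mapsto (x, F(x)h)$ of skew products, and this isomorphism commutes with right translations. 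The map $\theta \times \id_H$ intertwines $\cR_{(d\times\omega)\circ(\theta\times\theta)}$ with $\cR_{d\times\omega}$. Together these give a conjugacy of the Mackey ranges.

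\emph{Sufficiency.} The key step is a uniqueness theorem for cocycles on a hyperfinite equivalence relation. By \cite{CFW81}, $\cR$ is hyperfinite. I would show the following: two cocycles $a, b : \cR \to H$ on hyperfinite ergodic relations whose Mackey ranges are conjugate, and which are \emph{recurrent} in the appropriate sense, are weakly equivalent. Here the relevant recurrence is that the restriction to every $\cR_Y$ has the same ``$H$-valued ratio set'' modulo the Mackey range. For $a = c\times\omega$, this recurrence is exactly what ergodicity of $c$ together with the Radon--Nikodym property of $\omega$ provides.

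I would prove this uniqueness by Krieger's back-and-forth method.
\begin{enumerate}
\item Exhaust $\cR$ and $\cR'$ by increasing finite subrelations.
\item At each stage, construct a partial isomorphism $\theta_n$ of finite subrelations together with a transfer function $F_n : X \to H$.
\item Arrange that $b\circ(\theta_n\times\theta_n)$ agrees with $F_n\, a\, F_n^{-1}$ on the current finite piece, up to an error tending to $0$ in $H$.
\end{enumerate}
The extension step from stage $n$ to stage $n+1$ uses an approximation lemma. Given a set $A$ and a target value $h \in H$ lying in the essential range of $b$ on a conull portion, one can find partial isomorphisms in $[[\cR']]$ realizing $h$ with small error. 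The existence of such partial isomorphisms is precisely encoded by the conjugacy of the Mackey ranges, since the ratio set of the skew product is computed from them.

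Once weak equivalence of $c\times\omega$ and $d\times\omega$ as $H$-valued cocycles is established, I project to the first coordinate. This gives $\theta \in \Aut(\cR)$ and $f = \mathrm{pr}_G\circ F$ such that $d\circ(\theta\times\theta) = f\, c\, f^{-1}$, which proves the theorem.

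The main obstacle is the inductive approximation step. The difficulty is controlling errors in a non-discrete, possibly nonabelian group $H$ so that $F_n$ converges almost everywhere, while also keeping $\theta_n$ measure-class preserving. The $\R^*_+$ coordinate is what forces $\theta$ to respect $\nu$ up to the correct Radon--Nikodym derivative. To handle this, I would choose a summable sequence of compact neighborhoods $U_n \subset H$ of the identity, and at each step require the correction to lie in $U_n$ outside a set of measure $<2^{-n}$. The Borel–Cantelli lemma then gives almost everywhere convergence of $F_n$ and of $\theta_n$.
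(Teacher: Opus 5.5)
Your necessity argument is correct. $\omega\circ(\theta\times\theta)$ is cohomologous to $\omega$ via $\rd(\nu\circ\theta)/\rd\nu$, and the maps $(x,h)\mapsto(x,F(x)h)$ and $\theta\times\id_H$ intertwine the skew products and commute with right translations. Sufficiency, however, is the whole content of the theorem, and your sketch does not establish it; the paper itself does not prove it and imports it from \cite{GS91}. The reduction is already flawed. The principle you state for $H$-valued cocycles (conjugate Mackey ranges plus a recurrence or essential-range condition imply weak equivalence) is false. Let $\cR=\cR_1\times\cR_2$, with $\cR_1$ hyperfinite of type $\III_1$ and $\cR_2$ hyperfinite of type $\II_1$ carrying an ergodic cocycle $c_2:\cR_2\to\R^*_+$ (Theorem~\ref{model ergodic kernel dense subgroup measure preserving case}); then $\omega=\omega_1\circ\mathrm{pr}_1$. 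The cocycles $a=\omega$ and $b=c_2\circ\mathrm{pr}_2$ are both ergodic, so both have trivial Mackey range and full essential image on every $\cR_Y$. Yet $a\times\omega$ takes values in the diagonal of $(\R^*_+)^2$, so its Mackey range is nontrivial. On the other hand, $\cR_{b\times\omega}\cong(\cR_1)_{\omega_1}\times(\cR_2)_{c_2}$ is ergodic. By your own necessity argument, $a$ and $b$ are therefore not weakly equivalent. The invariant must be the Mackey range of $a\times\omega$. For $a=c\times\omega$, the corrected principle is just the theorem again, now for the non-ergodic cocycle $c\times\omega$, so passing to $H$ gains nothing.

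The substantive gap is the approximation lemma that drives your back-and-forth, which you assert rather than prove. The reason you give, that the needed partial isomorphisms are ``encoded'' because ``the ratio set of the skew product is computed from the Mackey ranges'', does not suffice. Essential values are local recurrence data and do not determine a non-transitive Mackey range: every type $\III_0$ relation has ratio set $\{0,1\}$, whatever its Krieger flow. Such Mackey ranges do occur for ergodic $c$. Take $c=c_2\circ\mathrm{pr}_2$ on $\cR_0\times\cR_2$, with $\cR_0$ of type $\III_0$ and $c_2:\cR_2\to G$ ergodic. Then the Mackey range of $c\times\omega$ is the trivial $G$-action times the properly ergodic Krieger flow of $\cR_0$. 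In that case, whether a value $h$ can be realized by a partial isomorphism from $A$ into $B$ depends on which ergodic components of $\cR_{c\times\omega}$ meet $A\times\{k\}$ and $B\times\{hk\}$. Each stage of your construction must therefore transport an ergodic-component coordinate along the given conjugacy of Mackey ranges, coherently with all earlier stages. This is a Krieger type $\III_0$-style argument, it is precisely the hard core of \cite{GS91}, and your sketch does not supply it. Borel--Cantelli alone also does not make $\lim\theta_n$ a nonsingular automorphism of $\cR$ onto $\cR$. As it stands, the sufficiency direction is unproved.
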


Thanks to this classification theorem, we can prove Theorem \ref{ergodic kernel dense subgroup} by building model cocycles with prescribed Mackey range that satisfy the desired properties. More precisely we will prove the following.

\begin{theorem} \label{model ergodic kernel dense subgroup}
    Let $G$ be a second countable locally compact group. Let $\rho : G \times \R^*_+ \curvearrowright (Y,\eta)$ be any ergodic nonsingular action. Then there exists an ergodic nonsingular equivalence relation $\cR$ on a probability space $(X,\nu)$ and a cocycle $c : \cR \rightarrow G$ such that the Mackey range of $c \times \omega : \cR \rightarrow G \times \R^*_+$ is isomorphic to $\rho$, where $\omega : \cR \rightarrow \R^*_+$ is the Radon--Nikodym cocycle.

    Moreover, the following holds :
    \begin{enumerate}
        \item If $G$ is amenable, we can take $\cR$ to be amenable.
        \item If $\rho|_{\R^*_+}$ is ergodic, we can take $c$ such that $\ker(c)$ is ergodic.
        \item If $\Lambda < G$ is a dense countable subgroup, we can take $c$ with values in $\Lambda$.
    \end{enumerate}
    Any two of these conditions can be satisfied simultaneously. If $\Lambda$ is amenable, the three of them can be satisfied simultaneously.
\end{theorem}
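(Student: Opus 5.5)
The plan is to build the model as an induced equivalence relation, in the sense of Section 4, for a suitably twisted version of $\rho$. I would then read off the Mackey range using Lemma \ref{dense range induced equivalence relation}.

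\textbf{Step 1: make the $\R^*_+$-coordinate scale the measure.} Write $\rho=(\rho|_G,\alpha)$, where $\alpha=\rho|_{\R^*_+}$ commutes with the $G$-action. I want a nonsingular $G\times\R^*_+$-space $(W,\mu)$ on which the $\R^*_+$-coordinate $\lambda$ scales $\mu$ by exactly $\lambda$. The space of $\theta$-scaled ergodic components should then recover $\rho$. The natural candidate is the adjoint-flow construction of Section 3.
\begin{itemize}
\item Take $W$ to be the spectrum of $\widetilde{\rL^\infty(Y)}$, the Maharam extension of $\rL^\infty(Y)$, with the trace-scaling action $\theta$.
\item Twist the $\R^*_+$-action by $\theta$, i.e.\ use $\lambda\mapsto \widetilde\alpha_{\lambda}\circ\theta_\lambda$ (or its inverse, depending on conventions).
\item Extend $\rho|_G$ by its Maharam extension $\widetilde{\rho|_G}$, which commutes with everything.
\end{itemize}
By the computation $(\alpha^{(t)})^{(s)}\cong\alpha^{(t+s)}$ and the functoriality of $\pi\mapsto\pi^{(t)}$, the algebra of functions invariant under the twisted flow is $\rL^\infty(Y)^{(\pm1,\alpha)}$. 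Applying the construction once more therefore returns $(Y,\rho)$. I would choose the sign so that the final Mackey range is $\rho$ itself, not $\rho$ composed with $\lambda\mapsto\lambda^{-1}$, using $(\alpha^\dagger)^\dagger\cong\alpha$.

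\textbf{Step 2: discretize and induce.}
\begin{itemize}
\item Pick a countable dense subgroup $\Delta<\R^*_+$, and a countable dense subgroup $\Lambda<G$ (the given one in case (3)).
\item Put $\Gamma=\Lambda\times\Delta$, and take a free ergodic probability measure preserving action $\Gamma\curvearrowright(Z,\zeta)$, for instance a Bernoulli action.
\item Let $b:\cR_\Gamma\to G\times\R^*_+$ be the cocycle given by the inclusion $\Gamma\hookrightarrow G\times\R^*_+$.
\item Define $\cR=(\cR_\Gamma)_{\rho',b}$ on $Z\times W$, where $\rho'$ is the action of Step 1. Equivalently, $\cR$ is the orbit relation of the diagonal action of $\Gamma$.
\item Set $c(\cdot)=\mathrm{pr}_G\circ b$, which takes values in $\Lambda$, so (3) holds.
\end{itemize}
By Lemma \ref{Radon--Nikodym induced equivalence relation}, the Radon--Nikodym cocycle of $\zeta\otimes\mu$ along $\cR$ is $\sigma(b,\cdot)$. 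By Step 1 this equals the $\R^*_+$-component of $b$, up to the $G$-part, which is measure preserving after the Maharam extension. Hence $c\times\omega=b$, as a cocycle pulled back to $\cR$. Finally, normalize to a probability measure, which does not change $\omega$ up to coboundary.

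\textbf{Step 3: compute the Mackey range.} Assume $b$ is an ergodic cocycle on $\cR_\Gamma$. Then Lemma \ref{dense range induced equivalence relation} identifies the $\cR_b$-invariant functions with the $\rho'$-invariant functions on $W$ times $G\times\R^*_+$, which after unwinding is $\rL^\infty(Y)$ with the action $\rho$. The ergodicity of $b$ is the point where the choice of $Z$ matters. It requires that for every nonnull $Y_0\subset Z$ the essential image of $b|_{Y_0}$ is all of $G\times\R^*_+$. I would obtain this by choosing the action of $\Gamma$ so that every infinite subgroup acts ergodically (Bernoulli actions are mixing). Then the return times to $Y_0$ form a set whose closure is a closed subgroup containing the dense subgroup $\Gamma$.

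\textbf{Step 4: the remaining properties.}
\begin{itemize}
\item For (2), $\ker(c)$ contains the orbit relation of $\{e\}\times\Delta$ acting on $Z\times W$. Its ergodicity reduces, again via Lemma \ref{dense range induced equivalence relation}, to ergodicity of the twisted flow on $W$. That in turn is equivalent to ergodicity of $\alpha=\rho|_{\R^*_+}$, since the invariants of the twisted flow are the adjoint algebra.
\item For (1) with $\Lambda$ amenable, $\Gamma$ is amenable, so the orbit relation is amenable.
\item For (1) and (2) without an amenable dense $\Lambda$, I would replace $\Lambda$ by $G$ itself. I would take the measured relation generated by an amenable, hence hyperfinite by \cite{CFW81}, nonsingular action of $G\times\Delta$ on $Z\times W$, restricted to a cross section.
\item Combining (2) and (3) without amenability is done exactly as in Step 2.
\end{itemize}

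\textbf{Main obstacle.} I expect the main obstacle to be Step 1 and its interaction with Step 3: getting the sign and twist conventions right, so that the Mackey range of $c\times\omega$ is exactly $\rho$ rather than an adjoint of it. This is precisely where the adjoint flow identities from Section 3 are used.
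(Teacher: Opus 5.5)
There is a genuine gap in Step 3, and it comes from Step 1. Your $\cR=(\cR_\Gamma)_{\rho',b}$ is induced from the action $\rho'$ on $W$, and you arranged $c\times\omega=b$. For an induced relation, however, the Mackey range of the inducing cocycle is the inducing action itself. Indeed, $\cR_b=(\cR_\Gamma)_{\kappa,b}$, where $\kappa$ is $\rho'$ on $W$ times left translation on $G\times\R^*_+$. Since the translation part is free and transitive, Lemma \ref{dense range induced equivalence relation} gives the invariants $\{F(w,h)=f(\rho'_{h^{-1}}w)\}\cong\rL^\infty(W)$, and right translation acts on them as $\rho'$. So the Mackey range of $c\times\omega$ is $\rho'$ on all of $W$, not $\rho$ on $Y$. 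The passage to $\rL^\infty(Y)^{(\pm1,\alpha)}$ that you invoke never actually happens in your construction.

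Choosing signs cannot repair this. In $\rho'$ the $\R^*_+$-coordinate always scales a measure, so it can never be, for instance, the trivial $\R^*_+$-action on a point. That is the case $G=\{e\}$ with a type $\III_1$ target. There your $W=\R^*_+$ with $\rho'_\lambda(s)=\lambda^{-1}s$, and $\cR$ is the orbit relation of $\Delta$ on $Z\times\R^*_+$. This relation preserves $\zeta\otimes s^{-1}\rd s$, so it is semifinite (type $\II_\infty$).

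Worse, $\cR$ need not even be ergodic, because every $\rho'$-invariant function on $W$ is $\cR$-invariant. Take $\rho$ measure preserving, say $G=\{e\}$ and $\R^*_+$ translating $(\R^*_+,y^{-1}\rd y)$. Then $\rho'_\lambda(y,s)=(\lambda y,\lambda^{\mp1}s)$ has a nonconstant invariant function ($ys$ or $y/s$, depending on your convention).

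The missing idea is that the twist by the Radon--Nikodym cocycle has to come out of the Mackey range computation itself. For that you must induce over $Y$ itself, not over its Maharam extension. The base relation must also have its own Radon--Nikodym cocycle equal to the $\R^*_+$-component of the inducing cocycle, and that cocycle must be ergodic.

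The paper does this as follows:
\begin{enumerate}
\item Take a probability measure preserving $(\cR_0,c_0)$ with $c_0$ ergodic (Theorem \ref{model ergodic kernel dense subgroup measure preserving case}); your Bernoulli construction for $\Lambda$ alone could serve here.
\item Take $\cT$ amenable of type $\III_1$ with Radon--Nikodym cocycle $\delta$.
\item Set $\cR=(\cR_0\times\cT)_{\rho,c_0\times\delta}$ and $c=c_0$.
\item Lemma \ref{Radon--Nikodym induced equivalence relation} gives $\omega=\delta\cdot\sigma(c_0\times\delta,\cdot)$. Hence $\cR_{c\times\omega}=(\cR_0\times\cT)_{\kappa,c_0\times\delta}$ with $\kappa_{(g,\lambda)}(y,h,s)=(\rho_{(g,\lambda)}y,\,gh,\,\lambda\sigma(g,\lambda,y)s)$.
\item The factor $\lambda\sigma$, rather than a free translation, is what makes the $\kappa$-invariants the $G$-equivariant maps into $\rL^\infty(Y)^{(1,\alpha)}$. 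So the Mackey range is $\rho^{(1)}$, and one starts from $\rho^{(-1)}$ instead of $\rho$.
\item Properties (1)--(3) are inherited from $(\cR_0,c_0)$ and $\cT$. Finally, $\ker(c)=(\ker(c_0)\times\cT)_{\alpha,1\times\delta}$ is ergodic by Lemma \ref{dense range induced equivalence relation}.
\end{enumerate}
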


We first observe that Theorem \ref{model ergodic kernel dense subgroup} holds in the case where $\rho : G \times \R^*_+ \curvearrowright \R^*_+$ is the action given by $\rho_{(g,\lambda)}(x)=\lambda x$ for all $(g,\lambda,x) \in G \times \R^*_+ \times \R^*_+$. This means that $c$ is ergodic and $\omega$ is a coboundary ($\cR$ is measure preserving).

\begin{theorem} \label{model ergodic kernel dense subgroup measure preserving case}
    Let $G$ be a second countable locally compact group. Then there exists an ergodic probability measure preserving equivalence relation $\cR$ with an ergodic cocycle $c : \cR \rightarrow G$.

    Moreover, the following holds :
    \begin{enumerate}
        \item If $G$ is amenable, we can take $\cR$ to be amenable.
        \item We can take $c$ such that $\ker(c)$ is ergodic.
        \item If $\Lambda < G$ is a dense countable subgroup, we can take $c$ with values in $\Lambda$.
    \end{enumerate}
    Any two of these conditions can be satisfied simultaneously. If $\Lambda$ is amenable, the three of them can be satisfied simultaneously.
\end{theorem}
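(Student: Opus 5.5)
We realize the pair $(\cR, c)$ as the orbit equivalence relation of a measure preserving action together with a cocycle coming from a group homomorphism. The basic construction: take a countable group $\Gamma$ with a homomorphism $\pi : \Gamma \to G$ with dense image. Let $\Gamma$ act on a probability space $(Z,\mu)$ by a free ergodic p.m.p.\ action $\sigma$, with orbit relation $\cR$, and set $c(\gamma z, z) = \pi(\gamma)$. Then $\cR_c$ is the orbit relation of the diagonal action $\Gamma \curvearrowright Z \times G$, $\gamma(z,h) = (\gamma z, \pi(\gamma)h)$. So $c$ is ergodic (Proposition \ref{ergodic skew product}) exactly when this diagonal action is ergodic. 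This holds, for instance, when $\sigma$ is mixing, or is a Bernoulli shift, and the translation action $\Gamma \curvearrowright G$ is ergodic, which is automatic since $\pi(\Gamma)$ is dense. More robustly, I would take $\sigma$ to be a Bernoulli action of $\Gamma$: Bernoulli shifts are weakly mixing relative to any nonsingular ergodic action, so the product with the ergodic (infinite measure) translation action on $G$ stays ergodic. Also $\ker(c)$ contains the orbit relation of $\ker(\pi)$ acting on $Z$, which is ergodic for a Bernoulli action as soon as $\ker(\pi)$ is infinite.

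Now the three requirements correspond to choices of $\Gamma$. For (3) with values in $\Lambda$ we need $\pi(\Gamma) \subset \Lambda$. For (2) we want $\ker\pi$ infinite, and for (1) $\Gamma$ should be amenable, since an orbit relation of a free action of an amenable group is amenable. I would take $\Gamma = \Lambda_0 \times \Z$ with $\pi$ the projection onto $\Lambda_0$, where $\Lambda_0$ is a suitable dense countable subgroup. The Bernoulli action of $\Gamma$ is free, mixing and ergodic, and $\ker \pi = \Z$ acts ergodically (Bernoulli restricted to an infinite subgroup is ergodic). The cases then run as follows.
\begin{enumerate}
\item All three conditions when $\Lambda$ is amenable: take $\Lambda_0 = \Lambda$. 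Then $\Gamma$ is amenable and all three hold simultaneously.
\item Conditions (1)+(2): if $G$ is amenable, take $\Lambda_0$ to be any countable dense amenable subgroup. Such a subgroup should exist, e.g.\ a countable dense subgroup of an amenable $G$ generated suitably. If not, I would avoid the issue by using instead $\cR$ given by the hyperfinite orbit relation of $\Z \curvearrowright (Z \times G)$ and invoking Lemma \ref{ergodic kernel} after constructing any ergodic cocycle on an amenable relation. Concretely, (1) alone follows from Connes--Feldman--Weiss: a free p.m.p.\ action of the amenable group $G$ itself, with $\cR$ a cross-section equivalence relation and $c$ the induced cocycle. Then Lemma \ref{ergodic kernel} gives (1)+(2), and Lemma \ref{dense subgroup} gives (1)+(3).
\item Conditions (2)+(3) without amenability: take $\Lambda_0 = \Lambda$.
\end{enumerate}

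The main step requiring care is verifying ergodicity of the diagonal action $\Gamma \curvearrowright Z \times G$ (with Haar measure, infinite in general) and ergodicity of $\ker(c)$. For the former I would use that a mixing p.m.p.\ action $\Gamma \curvearrowright Z$ is \emph{mildly mixing} relative to nonsingular actions: if $F$ is an invariant function on $Z \times G$, expand it in the $\rL^2(Z)$-variable. The component orthogonal to constants is a $\Gamma$-equivariant family of vectors, hence zero by mixing; this requires Schmidt's result on mixing actions times properly ergodic nonsingular actions, and I would simply cite the fact that Bernoulli shifts times any ergodic nonsingular action are ergodic. Then $F$ depends only on $h \in G$ and is invariant under left translation by the dense subgroup $\pi(\Gamma)$, hence constant by continuity of translation in $\rL^1_{\mathrm{loc}}$. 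Note that $\ker(c)$ is exactly the orbit relation of $\ker(\pi) = \Z$ when $\sigma$ is free, so its ergodicity is ergodicity of the Bernoulli action restricted to $\Z$, which holds since that restriction is again Bernoulli.
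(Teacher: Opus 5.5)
\paragraph{Gap in the amenable case.} The amenable case is not established: this covers condition (1) on its own, and (1)+(2), (1)+(3) when $\Lambda$ is not amenable.

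\begin{itemize}
\item \textbf{No dense amenable subgroup in general.} Your first idea needs a countable dense amenable subgroup $\Lambda_0<G$, which need not exist. By the Tits alternative, every amenable subgroup of $SO(3)$ is virtually solvable. Its closure then has solvable identity component, so it cannot be all of $SO(3)$, even though $SO(3)$ is amenable.
\item \textbf{The cross-section cocycle is never ergodic.} Let $G \curvearrowright (W,\mu)$ be free with cross-section $Y$, and define $c_Y$ by $y = c_Y(y,y')\,y'$. The map $Y \times G \to W$, $(y,h)\mapsto h^{-1}y$, is constant exactly on the classes of the skew product relation $\cR_{c_Y}$. So $\cR_{c_Y}$ is smooth, and the Mackey range of $c_Y$ is the free action $G \curvearrowright W$ itself. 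Hence $c_Y$ is ergodic only when $G$ is trivial.
\item \textbf{Not measure preserving either.} For non-unimodular $G$ the cross-section relation is not measure preserving: its Radon--Nikodym cocycle is $\Delta_G\circ c_Y$.
\end{itemize}

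What is missing is an existence theorem: an ergodic cocycle from an amenable ergodic p.m.p.\ relation into an arbitrary amenable $G$. The paper imports exactly this from Golodets--Sinelshchikov \cite{GS83a, GS83b}. After that, your appeal to Lemma~\ref{ergodic kernel} and Lemma~\ref{dense subgroup} is the paper's argument.

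A partial repair works for unimodular $G$. Take a free p.m.p.\ action of $G\times\Z$ on which $\Z$ acts ergodically, and compose its cross-section cocycle with the projection onto $G$. The Mackey range becomes the $G$-action on the $\Z$-invariant functions, which is trivial. This does not cover non-unimodular $G$.

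\paragraph{The remaining cases.} These are fine. The Bernoulli action of $\Lambda\times\Z$, with $\pi$ the first projection, gives (2)+(3), and all three conditions when $\Lambda$ is amenable. It plays the same role as the paper's wreath product $\Lambda \ltimes F^{(\Lambda)} \curvearrowright F^{\Lambda}$: a subgroup lying in the kernel of the cocycle acts ergodically.

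You do not need mixing or Schmidt's theorem for ergodicity of $c$:
\begin{itemize}
\item $\ker(c)$ is the ergodic orbit relation of $\ker\pi=\Z$, and $\Z$ acts trivially on the $G$-coordinate.
\item So any $\cR_c$-invariant function on $Z\times G$ depends only on $h\in G$.
\item It is then invariant under the dense subgroup $\pi(\Gamma)$, hence constant.
\end{itemize}

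The ``fact'' you quote, that a Bernoulli shift times any ergodic nonsingular action is ergodic, is false as stated. Take $\Gamma\curvearrowright\Gamma$ by translation: $(z,h)\mapsto f(h^{-1}z)$ is invariant for every $f$. Proper ergodicity is needed.
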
 
\begin{proof}
    If $G$ is amenable, it is known that there exists an ergodic cocycle $c : \cR \rightarrow G$ where $\cR$ is a hyperfinite ergodic probability measure preserving equivalence relation \cite{GS83a, GS83b}. Thanks to Lemma \ref{ergodic kernel}, up to replacing $c$ with a cohomologous cocycle, we can ensure that $\ker(c)$ is ergodic. If we use instead Lemma \ref{dense subgroup}, we can ensure that $c$ takes its values in $\Lambda$.

    If $G$ is not necessarily amenable, let $F$ be a nontrivial finite group and consider the wreath product $H = \Lambda \ltimes F^{(\Lambda)}$ with its natural free and ergodic probability measure preserving action on the compact group $(X, \nu) = F^\Lambda$. Let $\cR$ be the associated orbit equivalence relation.  Denote by  $p : H \to \Lambda$ the canonical quotient homomorphism. Let $c : \cR \rightarrow G$ be the cocycle defined by $c(hx, x)= p(h)$ for all $h \in H$ and $x \in X$. Then $\ker(c)$ is ergodic since it contains the orbit equivalence relation of $F^{(\Lambda)}$ and $c : \cR \rightarrow G$ is ergodic since $\Lambda < G$ is dense. Clearly, if $\Lambda$ is amenable, then $H$ is also amenable, hence $\cR$ is amenable.
\end{proof}

\begin{proof}[Proof of Theorem \ref{model ergodic kernel dense subgroup}]
    Let $\rho : G \times \R^*_+ \curvearrowright (Y,\eta)$ be an ergodic nonsingular action. Thanks to Theorem \ref{model ergodic kernel dense subgroup measure preserving case}, we can take a measure preserving equivalence relation $\cR_0$ on a probability space $(X_0,\nu_0)$ and an ergodic cocycle $c_0 : \cR_0 \rightarrow G$. Let $\cT$ be an ergodic amenable nonsingular equivalence relation on a probability space $(T,\zeta)$ such that the Radon--Nikodym cocycle $\delta : \cT \rightarrow \R^*_+$ is ergodic, i.e.\ $\cT$ is of type $\III_1$. 
    
    On $(X,\nu)=(X_0 \times T \times Y, \nu_0 \otimes \zeta \otimes \eta)$ consider the induced nonsingular equivalence relation $\cR=(\cR_0 \times \cT)_{\rho,c_0 \times \delta}$. Define the cocycle 
    $$c : \cR \ni (x,t,y,x',t',y') \mapsto c_0(x,x') \in G.$$

    Let us compute the Mackey range of $c \times \omega$ where $\omega$ is the Radon--Nikodym cocycle of $\cR$ with respect to $\nu$.
    
     Let $\sigma : G \times \R^*_+ \times Y \rightarrow \R^*_+$ be the Radon--Nikodym cocycle of $\rho$. Then the cocycle $\omega : \cR \rightarrow \R^*_+$ is given by  $$ \omega(x,t,y,x',t',y') = \delta(t,t') \sigma(c_0(x,x'),\delta(t,t'),y').$$

    The product cocycle $c \times \omega : \cR \rightarrow G \times \R^*_+$ is given by
    $$ (c \times \omega)(x,t,y,x',t',y') =\left( c_0(x,x'), \delta(t,t') \sigma(c_0(x,x'),\delta(t,t'),y') \right).$$

    We want to compute the algebra $$\rL^\infty(X_0 \times T \times Y \times G \times \R^*_+)^{\cR_{c \times \omega}}$$ of all invariant functions for the skew product equivalence relation $\cR_{c \times \omega}$. 
    
    Consider the action $\kappa :  G \times \R^*_+ \curvearrowright Y \times G \times \R^*_+$ given by $$(g,\lambda) \cdot (y,h,s) = (\rho(g,\lambda) y, g h, \lambda \sigma(g,\lambda,y) s).$$
    Then we have the equality $$\cR_{c \times \omega} = (\cR_0 \times \cT)_{\kappa,c_0 \times \delta}.$$ 
    Since $c_0 \times \delta : \cR_0 \times \cT \rightarrow G \times \R^*_+$ is ergodic, we conclude by Lemma \ref{dense range induced equivalence relation} that $$\rL^\infty(X_0 \times T \times Y \times G \times \R^*_+)^{\cR_{c \times \omega}}=\rL^\infty(Y \times G \times \R^*_+)^{\kappa}.$$

     We can regard these $\kappa$-invariant functions in $\rL^\infty(Y \times G \times \R^*_+)^{\kappa}$ as $G$-equivariant functions in $\rL^\infty(G, \rL^\infty(Y)^{(1,\alpha)})$ where $\alpha=\rho|_{\R^*_+}$ and $G$ acts on $\rL^\infty(Y)^{(1,\alpha)}$ by the restriction of the Maharam extension of $\rho|_G$. The Mackey range of $c \times \omega$, which is the restriction of the right translation action of $G \times \R^*_+$ on $\rL^\infty(Y \times G \times \R^*_+)^{\kappa}$, is then identified with $\rho^{(1)}$. Therefore, if we replace $\rho$ by $\rho^{(-1)}$ in our construction, we obtain that the Mackey range of $c \times \omega$ is precisely $(\rho^{(-1)})^{(1)} \cong \rho$.

    Observe that if  $\cR_0$ is amenable, then $\cR$ is also amenable. If $c_0$ takes its values in a dense countable subgroup $\Lambda < G$, then so does $c$. 

    Suppose that $\alpha=\rho|_{\R^*_+}$ is ergodic and choose $c_0$ such that $\ker(c_0)$ is ergodic. Observe that $\ker(c)$ is the induced equivalence relation $(\ker(c_0) \times \cT)_{\alpha,1 \times \delta}$.
     Since the cocycle $1 \times \delta  : \ker(c_0) \times \cT \rightarrow \R^*_+$ is ergodic then, thanks to Lemma \ref{dense range induced equivalence relation}, we conclude that $\ker(c)$ is ergodic.
\end{proof}

\begin{proof}[Proof of Theorem \ref{ergodic kernel dense subgroup}] The theorem is trivial if $\cR$ is of type $\rm I$ because $c$ is a coboundary in this case and $G$ must be trivial. Assume now that $\cR$ is not of type $\rm I$, i.e.\ that it acts on a diffuse standard probability space $(X, \nu)$. Let $\omega : \mathcal R \to \R^*_+$ be the Radon--Nikodym cocycle associated with $\nu$. Denote by $\rho : G \times \R^*_+ \curvearrowright (Y, \eta)$ the Mackey range of the product cocycle $c \times \omega : \mathcal R \to G \times \R^*_+$. Since $c$ is ergodic we know that $\rho|_{\R^*_+} : \R^*_+ \curvearrowright \rL^\infty(Y, \eta)$ is ergodic.

By Theorem \ref{model ergodic kernel dense subgroup}, there exists an amenable ergodic equivalence relation $\mathcal S$ and an ergodic cocycle $d : \mathcal S \to G$ such that $d$ takes its values in $\Lambda$, $\ker(d) < \mathcal S$ is ergodic and the Mackey range of the product cocycle $d \times \delta : \mathcal S \to G \times \R^*_+$ is isomorphic to $\rho$ where $\delta : \cS \rightarrow \R^*_+$ is the Radon--Nikodym cocycle. Since the Krieger flow of $\mathcal S$ coincides with $\rho|_{\R^*_+} : \R^*_+ \curvearrowright \rL^\infty(Y, \eta)^G$, it follows that $\mathcal R$ and $\mathcal S$ are isomorphic when $\mathcal R$ if of type $\III$ and stably isomorphic when $\mathcal R$ is of type ${\rm II}$ \cite{CFW81, Kr75}. Actually, if $\cR$ is of type $\II_1$, we can choose $\mathcal S$ to be also of type $\II_1$ by Theorem \ref{model ergodic kernel dense subgroup measure preserving case}. If $\cR$ is of type $\II_\infty$, we can amplify $\mathcal S$ so that it becomes also of type $\II_\infty$. Hence in all cases, we obtain  an equivalence relation $\cS$ that is isomorphic to $\cR$. Thus, we may assume that $d$ is a cocycle defined on $\cR$.

We have constructed an ergodic cocycle $d : \mathcal R \to G$ that takes its values in $\Lambda$, such that $\ker(d) < \mathcal R$ is ergodic and such that the Mackey range of the product cocycle $d \times \omega : \mathcal R \to G \times \R^*_+$ is isomorphic to the Mackey range of $c \times \omega$.

By Theorem \ref{classification of cocycles}, there exists $\theta \in \Aut(\mathcal R)$ such that $c$ and $d \circ (\theta \times \theta)$ are cohomologous. Therefore, upon replacing $d$ by $d \circ (\theta \times \theta)$, we are done.
\end{proof}




\section{The cocycle perturbation theorem}
Our goal in this section is to prove Theorem \ref{cocycle perturbation flow} by using Theorem \ref{ergodic kernel dense subgroup}. In fact, we will prove the following more general version for arbitrary locally compact abelian groups.

\begin{theorem} \label{cocycle perturbation}
    Let $M$ be a factor with separable predual. Let $\alpha : G \curvearrowright M$ be an action of a second countable locally compact abelian group $G$. Suppose that $\alpha$ is almost periodic and that $M \rtimes_\alpha G$ is a factor. Then $\alpha$ is cocycle equivalent to an action $\beta : G \curvearrowright M$ such that $\beta$ is almost periodic, $M^\alpha \subset M^\beta$ and $M^\beta$ is a factor.

    More precisely, we can choose $\beta$ of the form $\beta_g=\Ad(v_g) \circ \alpha_g, \; g \in G$ for some continuous morphism $v : G \rightarrow \cU(\cZ(M^\alpha))$.
\end{theorem}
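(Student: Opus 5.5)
The plan is to encode the spectral data of $\alpha$ as a cocycle on an amenable equivalence relation on the center of $M^\alpha$, and then straighten it with Theorem \ref{ergodic kernel dense subgroup}. Since $\alpha$ is almost periodic, the closure $K$ of $\alpha(G)$ in $\Aut(M)$ is a compact abelian group. It is the image of $G$ under a continuous homomorphism with dense range, so its dual is identified with a countable subgroup $\Lambda < \widehat G$. We then have the spectral decomposition $M = \overline{\sum_{p \in \Lambda} M(p)}$, where $M(p) = \{x \in M \mid \alpha_g(x) = \langle g, p\rangle x\}$. Write $N = M^\alpha = M(0)$ and $\cZ(N) = \rL^\infty(X,\nu)$.

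\textbf{Step 1: the equivalence relation and the cocycle.} For $x \in M(p)$ with polar decomposition $x = w|x|$, the partial isometry $w$ lies in $M(p)$. Moreover $wNw^* \subset N$ on the appropriate corners, since $w^* N w \subset M(0)$. Each such $w$ therefore induces a partial isomorphism between corners of $N$, and hence a partial automorphism of $X$. The partial automorphisms obtained from all $w \in M(p)$, $p \in \Lambda$, generate a countable nonsingular equivalence relation $\cR$ on $X$. It carries the cocycle $c : \cR \to \Lambda \subset \widehat G$ that records the eigenvalue $p$. Well-definedness requires showing that two eigenvectors inducing the same germ at $x$ have the same eigenvalue, up to an element of $N$, i.e.\ eigenvalue $0$. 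Amenability of $\cR$ holds because $\cR$ is implemented inside the conditional expectation structure $N \subset M$ with $N$ of finite index in a suitable sense (stable normalization); alternatively, it follows because $\cR$ is generated by a countable abelian group's worth of partial maps. I also need $\cR$ to be ergodic. This follows since $\cZ(N) \cap \{w\}' $ for all eigen-isometries $w$ lies in $\cZ(M) = \C 1$.

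\textbf{Step 2: ergodicity of $c$ with respect to $\widehat G$.} I would show that if $M \rtimes_\alpha G$ is a factor, then $c$ is ergodic as a cocycle into $\widehat G$. The idea is that the algebra $\rL^\infty(X \times \widehat G)^{\cR_c}$ embeds into the center of $M \rtimes_\alpha G$. An $\cR_c$-invariant function $F(x,q)$ corresponds to an element of $\cZ(N) \ovt \rL(G)$, via $\rL(G) \cong \rL^\infty(\widehat G)$, that commutes with every $w\lambda_\alpha$-twisted eigenvector, because $\lambda_\alpha(g)$ acts on $M(p)$ by the character $p$. This element is therefore central. Since the Mackey range is trivial, Proposition \ref{ergodic skew product} gives the ergodicity of $c$.

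\textbf{Step 3: perturbation.} Since $\widehat G$ is abelian, $\Lambda$ is amenable, and, as will be checked, dense (density is forced by ergodicity of $c$, because $c$ takes values in $\Lambda$). Theorem \ref{ergodic kernel dense subgroup} then gives a measurable $q : X \to \widehat G$ such that $d(x,y) = c(x,y) + q(x) - q(y)$ takes values in $\Lambda$ and $\ker(d)$ is ergodic. Define $v_g \in \cU(\cZ(N))$ by $v_g(x) = \langle g, q(x)\rangle$. This is a continuous homomorphism, and since $v$ is $\alpha$-fixed it is a $1$-cocycle. For $\beta = \Ad(v)\circ\alpha$, every eigen-isometry $w \in M(p)$ localized on a piece where $q$ is nearly constant becomes, after cutting by spectral projections of $q$, a $\beta$-eigenvector with eigenvalue $d \in \Lambda$. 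Hence $M$ is still spanned by $\beta$-eigenvectors with eigenvalues in the countable group $\Lambda$, so $\beta$ is almost periodic, and $M^\alpha \subset M^\beta$ since $v$ commutes with $N$.

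\textbf{Step 4: factoriality of $M^\beta$, the main obstacle.} The algebra $M^\beta$ is generated by $N$ and the cut-down eigen-isometries on which $d = 0$, i.e.\ those implementing $\ker(d)$. I need $\cZ(M^\beta) \subset \cZ(N)$. I would derive this by showing $N' \cap M^\beta \subset \cZ(N)$, or more modestly that any central element of $M^\beta$ has only a zero spectral component for $\alpha$ in a way compatible with the generators. Granting this, a central element is a function on $X$ invariant under $\ker(d)$, hence constant by ergodicity. The delicate parts I expect are exactly this relative commutant control (since $N' \cap M$ may be larger than $\cZ(N)$) together with the rigorous construction of $\cR$ and $c$ in Step 1. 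For the relative commutant, I would first try the Fourier expansion of a central element $z \in M^\beta$ with respect to $\alpha$: each component $z_p \in M(p)$ intertwines $N$ with itself. If $z_p \neq 0$ for some $p \neq 0$, this should force a contradiction with $\ker(d)$-invariance and the ergodicity of $c$.
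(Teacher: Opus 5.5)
There is a genuine gap in Step~1, and it carries through Steps~2--4. As one may after tensoring with $\B(\ell^2)$, assume $M=N\rtimes_\theta\Lambda$ with $N=M^\alpha$, and let $\eta=\theta|_{\cZ(N)}$ be the induced action of $\Lambda$ on $X$. Nothing forces $\eta$ to be free. As soon as $\Lambda_0=\ker(\eta)\neq\{0\}$, your well-definedness claim is false. For $\gamma\in\Lambda_0\setminus\{0\}$, the unitary $u_\gamma$ induces the identity of $X$, exactly as $1$ does. Yet every nonzero element of $Nu_\gamma$ has eigenvalue $\gamma\neq 0$, and multiplying by elements of $N$ never changes an eigenvalue. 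So ``record the eigenvalue'' does not define a cocycle on the orbit equivalence relation $\cR$. The ergodic case $N=\C1$ makes this plain: $X$ is a point and $\cR$ is trivial, while $M\rtimes_\alpha G$ can be a factor. There the conclusion of Step~2 (an ergodic $\widehat G$-valued cocycle on $\cR$) is simply false. What factoriality of $M\rtimes_\alpha G=(N\ovt\rL^\infty(\widehat G))\rtimes_{\theta\otimes\rho}\Lambda$ actually gives is ergodicity of the diagonal action $\eta\otimes\rho$ on $X\times\widehat G$. The right object is the transformation groupoid $\Lambda\ltimes_\eta X$, which has isotropy $\Lambda_0$. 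When $\eta$ is free, your outline is essentially the paper's proof.

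The missing idea is how to handle this isotropy, which is what Proposition~\ref{abelian groupoid} and Theorem~\ref{ergodic kernel dense subgroup for non free actions} do.
Since $\Lambda$ is abelian and $\eta$ is ergodic, stabilizers are a.e.\ equal to $\Lambda_0$. Hyperfiniteness of $\cR$ (a generating transformation lifted to a bisection) then splits $\Lambda\ltimes_\eta X\cong\Lambda_0\times\cR$, with the tautological cocycle becoming $\id\times c_0$ for some $c_0:\cR\to\Lambda$.
Theorem~\ref{ergodic kernel dense subgroup} is applied not in $\widehat G$ but in $\widehat G/H$ with $H=\overline{\Lambda_0}$; ergodicity of $\eta\otimes\rho$ is exactly ergodicity of $c_0$ modulo $H$. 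This gives an ergodic $\cS=\ker(\widetilde d_0)$.
After lifting the transfer function, the lifted cocycle $d_0$ is only $H$-valued on $\cS$. Lemma~\ref{dense subgroup} is then used on $\cS$ to push $d_0|_{\cS}$ into $\Lambda_0$.
Only then does each pair $(x,y)\in\cS$ get corrected by an isotropy element, namely $(-d_0(x,y),x,y)\in\ker(d)$, yielding genuinely $\beta$-fixed elements that implement an ergodic relation. If $d_0|_{\cS}$ is merely $H$-valued, no isotropy element cancels it, the elements implementing $\cS$ have nonzero $\beta$-eigenvalues, and Step~4 has nothing to work with.

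Finally, the point you flag as the main obstacle in Step~4 is immediate (Lemma~\ref{central cocycle equivalence}). Since $v_g\in\cZ(N)\subset M^\beta$, any $z\in\cZ(M^\beta)$ commutes with $v_g$. Hence $\alpha_g(z)$ commutes with $\alpha_g(v_g)=v_g$, and $z=\beta_g(z)=v_g\alpha_g(z)v_g^*=\alpha_g(z)$. Thus $z\in N\cap(M^\beta)'\subset\cZ(N)$, with no relative-commutant control or Fourier analysis needed.
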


\begin{lemma} \label{central cocycle equivalence}
    Let $M$ be a von Neumann algebra and let $\alpha : G \curvearrowright M$ be a continuous action of a locally compact abelian group $G$. Let $v : G \rightarrow \cU(\cZ(M^\alpha))$ be a continuous morphism and define a new action $\beta : G \curvearrowright M$ by $\beta_g=\Ad(v_g) \circ \alpha_g$ for all $g \in G$. Then $M^\alpha \subset M^\beta$ and $\cZ(M^\beta) \subset \cZ(M^\alpha)$.
\end{lemma}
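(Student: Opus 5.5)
The plan is to verify the two inclusions directly from the definition of $\beta$, using only that $v_g$ is a central unitary of the fixed point algebra $M^\alpha$.

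First I will check that $\beta$ is indeed an action. Since $v_g \in M^\alpha$, we have $\alpha_g(v_h) = v_h$. Hence
$$\beta_g\beta_h = \Ad(v_g)\alpha_g\Ad(v_h)\alpha_h = \Ad(v_g\alpha_g(v_h))\alpha_{gh} = \Ad(v_gv_h)\alpha_{gh} = \Ad(v_{gh})\alpha_{gh}.$$
In other words, $v$ is a $1$-cocycle for $\alpha$, because $v$ is a homomorphism with values in $M^\alpha$.

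For the inclusion $M^\alpha \subset M^\beta$, take $x \in M^\alpha$. Then $\beta_g(x) = v_g\alpha_g(x)v_g^* = v_g x v_g^* = x$, since $v_g$ lies in the center of $M^\alpha$ and therefore commutes with $x$.

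For the inclusion $\cZ(M^\beta) \subset \cZ(M^\alpha)$, take $z \in \cZ(M^\beta)$. Each $v_h$ is $\beta$-fixed, since $\beta_g(v_h) = v_g v_h v_g^* = v_h$ by commutativity of $\cZ(M^\alpha)$. So $v_h \in M^\alpha \subset M^\beta$, and centrality of $z$ in $M^\beta$ gives that $z$ commutes with every $v_g$. It follows that
$$z = \beta_g(z) = v_g\alpha_g(z)v_g^*, \quad\text{so}\quad \alpha_g(z) = v_g^* z v_g = z,$$
and therefore $z \in M^\alpha$. Finally, $z$ commutes with all of $M^\beta \supset M^\alpha$, so $z \in \cZ(M^\alpha)$.

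There is no real obstacle here. The only point needing care is the order of the argument: we must first establish $M^\alpha \subset M^\beta$ and that the $v_g$ lie in $M^\beta$, so that centrality of $z$ in $M^\beta$ lets us cancel $\Ad(v_g)$. Continuity of $\beta$ follows from the continuity of $v$ and $\alpha$, but it is not needed for the inclusions.
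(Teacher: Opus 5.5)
Your proof is correct and follows the paper's argument: $M^\alpha \subset M^\beta$ is immediate, and for $z \in \cZ(M^\beta)$ you use $v_g \in M^\beta$ to get $\alpha_g(z) = \beta_g(z) = z$, then conclude $z \in M^\alpha \cap \cZ(M^\beta) \subset \cZ(M^\alpha)$. The extra verification that $\beta$ is an action is harmless; the paper takes it for granted.
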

\begin{proof}
    It is clear that $M^\alpha \subset M^\beta$. Take $x \in \cZ(M^\beta)$. Since $v_g \in M^\beta$, $x$ commutes with $v_g$ hence $\alpha_g(x)=\beta_g(x)=x$ for all $g \in G$. Thus, $x \in M^\alpha$ and we conclude that $x \in M^\alpha \cap \cZ(M^\beta) \subset \cZ(M^\alpha)$.
\end{proof}

Let $\Gamma$ be a countable discrete group $\Gamma$, $(X, \nu)$ a standard probability space, $\theta : \Gamma \curvearrowright (X, \nu)$ any (not necessarily free) nonsingular action and $G$ any second countable locally compact group. A cocycle $c : \Gamma \times X \to G$ is a measurable map that satisfies $c(\gamma_1 \gamma_2,x)=c(\gamma_2,\gamma_1 \cdot x)c(\gamma_1,x)$ for all $\gamma_i \in \Gamma$ and almost every $x \in X$. We say that $c$ is a coboundary if it is of the form $c(\gamma,x)=f(\gamma \cdot x) f(x)^{-1}$ for some measurable map $f : X \rightarrow G$. This relation is denoted $c=\partial f$.

We denote by $\rZ^1(\theta, G)$ (resp.\! $\rB^1(\theta, G)$) the space of all measurable cocycles (resp.\! coboundaries) with values in $G$ with respect to the action $\theta : \Gamma \curvearrowright (X, \nu)$.

Theorem \ref{cocycle perturbation} will follow from the following theorem.

\begin{theorem}  \label{ergodic kernel dense subgroup for non free actions}
Let $G$ be a second countable locally compact abelian group. Let $\Gamma < G$ be a countable dense subgroup. Let $\theta : \Gamma \curvearrowright (X,\nu)$ be a (not necessarily free) nonsingular action. Let $c \in \rZ^1(\theta,\Gamma)$ be the tautological cocycle given by $c(\gamma,x)=\gamma$ for all $(\gamma,x) \in \Gamma \times X$.

Assume that the diagonal action $\theta \otimes \rho : \Gamma \curvearrowright (X \times G,\nu \otimes m_G)$ is ergodic where $\rho : \Gamma \curvearrowright G$ is the action by left multiplication. 

Then for every countable dense subgroup $\Lambda < G$ that contains $\ker(\theta)$, we can find a cocycle $d \in \rZ^1(\theta,\Lambda)$ that is cohomologous to $c$ inside $\rZ^1(\theta,G)$ and such that the subgroupoid $\ker(d) < \Gamma \ltimes_\theta X$ acts ergodically on $(X,\nu)$.
\end{theorem}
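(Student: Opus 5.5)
The plan is to reduce the statement to Theorem \ref{ergodic kernel dense subgroup} applied to a suitable free action, and then descend back to $\theta$. The difficulty is that $\theta$ is not free, so $\Gamma \ltimes_\theta X$ is a groupoid rather than an equivalence relation. Two facts make the reduction plausible. First, $\Gamma$ is abelian, hence amenable, so every orbit equivalence relation built from $\Gamma$ is amenable. Second, $\Lambda$ is a countable subgroup of an abelian group, hence amenable, so the amenability hypothesis on $\Lambda$ in Theorem \ref{ergodic kernel dense subgroup} holds.

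\textbf{Step 1: freeing the action.} Let $K=\ker(\theta)$. Since $\theta \otimes \rho$ is ergodic, $\theta$ is ergodic, so almost every stabilizer is the constant subgroup $\Gamma_x = \Gamma_0 \supset K$; in fact $\Gamma_0 = K$, because the stabilizer map is $\Gamma$-invariant and abelian, hence conjugation-invariant. Then $\Gamma/K$ acts freely on $(X,\nu)$, and $\Gamma \ltimes_\theta X$ is the extension of the orbit equivalence relation $\cR = \cR(\Gamma/K \curvearrowright X)$ by the constant isotropy $K$. This reduction is where the hypothesis $K \subset \Lambda$ enters: on the isotropy part, $d(k,x)$ is conjugate in $G$ to $c(k,x)=k$, and since $G$ is abelian this forces $d(k,x)=k$. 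So $d$ can take values in $\Lambda$ only if $K\subset\Lambda$.

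\textbf{Step 2: passing to $G/\overline{K}$.} Set $H = G/\overline{K}$ and let $\bar c : \cR \to H$ be the induced cocycle $\bar c(\gamma x, x) = \gamma \overline{K}$, which is well defined. Ergodicity of $\theta\otimes\rho$ on $X\times G$ implies ergodicity of the skew product $\cR_{\bar c}$ on $X\times H$: functions invariant for the latter pull back to functions invariant for the former. By Proposition \ref{ergodic skew product}, $\bar c$ is then an ergodic cocycle. The group $\bar\Lambda = \Lambda \overline{K}/\overline{K}$ is countable, dense and amenable in $H$. Theorem \ref{ergodic kernel dense subgroup} therefore gives a measurable $\bar f : X \to H$ with $\bar d = \bar c \cdot \partial \bar f$ taking values in $\bar\Lambda$ and with $\ker(\bar d)$ ergodic.

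\textbf{Step 3: lifting back to $G$ (main obstacle).} I would choose a measurable lift $f: X\to G$ of $\bar f$ and put $d' = c\cdot\partial f$. Then $d'$ takes values in $\Lambda\overline{K}$, not necessarily in $\Lambda$. Moreover $\ker(d')$ only maps onto $\ker(\bar d)$ up to the closed subgroup $\overline{K}$, which may be non-discrete.

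To repair this, I would avoid the quotient altogether and work directly with the groupoid. I would rerun the proof of Theorem \ref{ergodic kernel dense subgroup}, treating the classification theorem, Theorem \ref{classification of cocycles}, relative to the constant isotropy $K$. Concretely, consider the cocycle $c$ on the free action of $\Gamma$ on $X \times (G/\Gamma_{\!K})$ obtained by inducing. Alternatively, and more simply, replace $X$ by the free action $\Gamma \curvearrowright X \times Z$, where $Z$ is an auxiliary free, mixing, probability measure preserving action of $\Gamma$ whose $K$-part is a Bernoulli shift. Apply Theorem \ref{ergodic kernel dense subgroup} there to obtain $d$ and $f$ on $X\times Z$. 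Then average or descend along the $Z$ factor using the fact that $K$ acts trivially on $X$.

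I expect this descent from the free extension to the non-free $\theta$ to be the hardest part. The first thing I would try is to choose the model in Theorem \ref{model ergodic kernel dense subgroup} so that the $G$-cocycle restricted to $K$ is exactly the identity, that is, to use the wreath-product model $H=\Lambda\ltimes F^{(\Lambda)}$ with $K\subset\Lambda$ acting through $p$. This keeps $d|_K=\mathrm{id}$ throughout and makes the cohomology compatible with the isotropy. Ergodicity of $\ker(d)$ on $X$ would then follow because it contains the isotropy together with a lift of the ergodic relation $\ker(\bar d)$.
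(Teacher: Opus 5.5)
\textbf{Verdict: there is a genuine gap.} Your Steps 1 and 2 match the paper's opening moves. You split $\Gamma\ltimes_\theta X\cong K\times\cR$ (Proposition \ref{abelian groupoid}), so that $c=\id\times c_0$. You project to $G/\overline{K}$ and apply Theorem \ref{ergodic kernel dense subgroup} to get $\bar d$ with values in $\bar\Lambda$ and $\cS=\ker(\bar d)$ ergodic. You also correctly spot the two defects of a naive lift $d_0=(\partial f)\cdot c_0$. But Step 3 is where the proof has to happen, and it contains no argument.

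\textbf{What fails in your Step 3.} A transfer function built on an extension $X\times Z$ has no reason to descend to $X$. Averaging $G$-valued maps would destroy both $\Lambda$-valuedness and the kernel. Your closing claim is also false as stated. We have $d(k,x,x)=k$, so $\ker(d)$ does not contain the isotropy. Moreover, $\cS$ lifts into $\ker(d)$ only if $d_0$ is $K$-valued on $\cS$, not merely $\overline{K}$-valued.

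\textbf{The missing idea, which is what the paper uses.} Apply Lemma \ref{dense subgroup} to the $\overline{K}$-valued cocycle $d_0|_{\cS}$ on the amenable ergodic relation $\cS$, using that $K$ is dense in $\overline{K}$. This changes $f$ by an $\overline{K}$-valued map, so $\bar d$ and $\cS$ are unchanged, and it makes $d_0|_{\cS}$ $K$-valued. Then $\{(d_0(x,y)^{-1},x,y):(x,y)\in\cS\}\subset\ker(d)$, which gives ergodicity.

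\textbf{Your $\Lambda$-valuedness concern is legitimate.} A priori $d_0$ is only $\Lambda\overline{K}$-valued, and the paper's proof does not address this: it simply declares $d_0$ to be $\Lambda$-valued. This is not automatic. When $\Lambda\cap\overline{K}\neq K$, it can even be incompatible with $d_0(\cS)\subset K$. Here is an example.
\begin{itemize}
\item Take $G=\T^2$, $K=\Z(\alpha,0)$ and $\Lambda=\Gamma=K\oplus\{(q\beta,q):q\in\Z[1/2]\}$, with $1,\alpha,\beta$ rationally independent. Let $\theta$ be a Bernoulli action of $\Gamma/K\cong\Z[1/2]$; all hypotheses hold.
\item Then $\Lambda\cap\overline{K}=K\oplus\Z(\beta,0)$ and $\bar\Lambda\cong\Z[1/2]/\Z$.
\item Every such $\bar d$ takes the value $1/2$ on a non-null set, because its values form an infinite subgroup of $\Z[1/2]/\Z$.
\item Suppose $d_0$ is $\Lambda$-valued and $K$-valued on $\cS$. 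On $\{\bar d=1/2\}$, the $\Z(\beta,0)$-coordinate of $d_0-(\beta/2,1/2)$ is $\cS$-invariant, hence a constant $m$.
\item If $\bar d(x,y)=\bar d(y,z)=1/2$, then $(x,z)\in\cS$, yet $d_0(x,z)\in(2m+1)(\beta,0)+K$, which lies outside $K$.
\end{itemize}
For $G=\R$, the case of Theorems \ref{main almost periodic flow} and \ref{main extremal}, this cannot happen. Either $\overline{K}=K$, so $\Lambda\overline{K}=\Lambda$, or $\overline{K}=\R$, so $\cS=\cR$.

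\textbf{A repair in the spirit of your wreath-product idea: build the $\Lambda$-valued lift first.}
\begin{itemize}
\item Rerun the proof of Theorem \ref{model ergodic kernel dense subgroup} for $G/\overline{K}$. Take $\cR_0$ to be the orbit relation of $\Lambda\ltimes F^{(\Lambda)}$ with the $\Lambda$-valued cocycle $c'(hx,x)=\pi(h)$, where $\pi$ is the quotient map onto $\Lambda$. Induce through $p\circ c'$, where $p:G\to G/\overline{K}$.
\item This gives a $\Lambda$-valued $d'$ with $\{d'\in K\}$ ergodic, by Lemma \ref{dense range induced equivalence relation}. Its projection $p\circ d'$ has the Mackey range required in Theorem \ref{classification of cocycles}.
\item Transport $d'$ to $\cR$ using that theorem, and let $f$ lift a transfer function from $\bar c$ to $p\circ d'$.
\item Then $(\partial f)\cdot c_0\cdot (d')^{-1}$ is an $\overline{K}$-valued cocycle on all of $\cR$, and Lemma \ref{dense subgroup} pushes it into $K$.
\end{itemize}
The resulting $d_0$ is $\Lambda$-valued, and $\{d_0\in K\}=\{d'\in K\}$ is ergodic.
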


We will reduce this cohomology problem for actions that are not necessarily free to a cohomology problem for equivalence relations by using the following proposition.

\begin{proposition} \label{abelian groupoid}
    Let $\theta : \Gamma \curvearrowright (X,\nu)$ be an ergodic nonsingular action of a countable abelian group $\Gamma$. Let $N = \ker(\theta)$ and let $\cR$ be the orbit equivalence relation of $\theta$. Then the transformation groupoid associated to $\theta$ decomposes as a direct product $\Gamma \ltimes_\theta X \cong  N \times \cR$.
\end{proposition}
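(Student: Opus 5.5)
The approach is to split off the stabilizer.

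First, since $\Gamma$ is abelian, the stabilizer $\Gamma_x = \{\gamma \in \Gamma \mid \gamma \cdot x = x\}$ is $\Gamma$-invariant as a function of $x$, because $\Gamma_{\gamma x} = \gamma \Gamma_x \gamma^{-1} = \Gamma_x$. The map $x \mapsto \Gamma_x$ takes values in the countable set of subgroups of $\Gamma$ (a standard Borel space), and it is measurable. By ergodicity it is therefore essentially constant, and the constant value is $\ker(\theta) = N$. So, after discarding a null invariant set, every point has stabilizer exactly $N$. This means the action factors through a free action of $\Gamma/N$, and $\cR$ is the orbit equivalence relation of this free action.

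Second, I need a splitting, and this is where the work lies. The natural map is
$$\Gamma \ltimes_\theta X \to N \times \cR : (\gamma, x) \mapsto (n(\gamma,x), (\gamma x, x)),$$
where $n(\gamma, x) \in N$ must be chosen so that the map is a groupoid isomorphism. Its kernel projection onto $\cR$ is the isotropy bundle $X \times N$, which is central because $\Gamma$ is abelian. So the task is to split the extension
$$1 \to N \to \Gamma \ltimes X \to \cR \to 1.$$
A splitting amounts to a measurable cocycle $s : \cR \to \Gamma$ with $s(y,x) \cdot x = y$, i.e.\ a lift of the orbit relation back to $\Gamma$. Freeness of the $\Gamma/N$-action gives a canonical cocycle $\bar s : \cR \to \Gamma/N$, defined by $\bar s(\gamma x, x) = \gamma N$. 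The question is therefore whether $\bar s$ lifts to a $\Gamma$-valued cocycle along the quotient $\Gamma \to \Gamma/N$.

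Third, I would lift $\bar s$ as follows. Choose any set-theoretic section $\sigma : \Gamma/N \to \Gamma$ and set $s_0 = \sigma \circ \bar s$. The failure of $s_0$ to be a cocycle is the $N$-valued $2$-cocycle $\sigma(a)\sigma(b)\sigma(ab)^{-1}$, pulled back to $\cR$. This extension need not split as groups; $\Z \to \Z/2$ is an example. It is the real obstacle, and I see two ways around it, the second being my fallback.

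\begin{enumerate}
\item Use amenability. $\Gamma/N$ is abelian, so $\cR$ is hyperfinite, $\cR = \bigcup_k \cR_k$ with $\cR_k$ finite. On a finite equivalence relation one can pick a base point in each class and define $s$ by $s(y,x) = t(y) t(x)^{-1}$, where $t(x) \in \Gamma$ is some element with $t(x) \cdot \mathrm{base} = x$. The task is then to make these choices compatible as $k$ increases. To do this, refine the base-point choice inductively so that $t$ is extended rather than redefined, choosing the $\cR_{k+1}$ base points and the elements $t$ consistently. I expect this to work because the relevant $\rH^2$ of a hyperfinite relation with coefficients in the abelian group $N$ vanishes.
\item Cite directly that $\rH^2$ of a hyperfinite relation with coefficients in $N$ vanishes.
\end{enumerate}

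Finally, with a cocycle $s$ in hand, I define $n(\gamma, x) = \gamma\, s(\gamma x, x)^{-1} \in N$; this lies in $N$ because $\gamma$ and $s(\gamma x, x)$ both send $x$ to $\gamma x$. Multiplicativity of $(\gamma,x) \mapsto (n(\gamma,x), (\gamma x, x))$ uses the cocycle identity for $s$ together with commutativity of $\Gamma$. The inverse is $(m, (y,x)) \mapsto (m\, s(y,x), x)$, so the map is a measurable groupoid isomorphism $\Gamma \ltimes_\theta X \cong N \times \cR$.
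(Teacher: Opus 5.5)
Your proof is correct, and its skeleton matches the paper's. Both arguments get the constant stabilizer $N$ from ergodicity and commutativity. Both then reduce the statement to finding a measurable section $s : \cR \to \Gamma$ of the quotient map $\Gamma \ltimes_\theta X \to \cR$ that is itself a cocycle, after which $(m,(y,x)) \mapsto (m\, s(y,x), x)$ is the isomorphism.

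The difference lies in how $s$ is produced. The paper uses hyperfiniteness in the form of a single transformation $T \in [\cR]$ that acts freely and generates $\cR$. It lifts $T$ to a bisection $S$ of $\Gamma \ltimes_\theta X$ and effectively takes $s(T^n(x),x)$ to be the $\Gamma$-label of $S^n(x)$. Since $\cR$ is then the groupoid of a free $\Z$-action, $n \mapsto S^n$ is automatically multiplicative, so the $\mathrm{H}^2$ obstruction you worry about never appears.

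You instead exhaust $\cR$ by finite subrelations. This is more elementary and does not need a freely acting generator, which is unavailable when $X$ is a single finite orbit (e.g.\ $\Z \curvearrowright \Z/2\Z$). The cost is a bookkeeping step that you should state precisely rather than ``expect'':
\begin{enumerate}
\item choose the $\cR_{k+1}$ base points among the $\cR_k$ base points;
\item pick $r_k$ constant on $\cR_k$-classes with $r_k(x)\cdot b_{k+1}(x) = b_k(x)$;
\item set $t_{k+1} = t_k r_k$.
\end{enumerate}
Then $t_{k+1}(y)t_{k+1}(x)^{-1} = t_k(y)t_k(x)^{-1}$ on $\cR_k$. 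So it is $s$, not $t$, that is extended, and no vanishing theorem for $\mathrm{H}^2$ needs to be cited.

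There is one minor slip in your first step: the set of subgroups of a countable group need not be countable (consider $\Q$). You only need it to be standard Borel. More directly, each set $\{x \mid \gamma\cdot x = x\}$ is $\Gamma$-invariant because $\Gamma$ is abelian, hence null or conull.
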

\begin{proof}
    Let $ p : \Gamma \ltimes_\theta X \rightarrow \cR$ be the quotient map. Then $\ker(p)=\ker(\theta) =N$ because $\Gamma$ is abelian and $\theta$ is ergodic. Since $\cR$ is hyperfinite \cite{CFW81}, we can find a transformation $T \in [\cR]$ acting freely on $X$ that generates $\cR$. We can then find a bisection $S \in [\Gamma \ltimes X]$ such that $p(S(x))=(T(x),x)$ for all $x \in X$. Then the map 
    $$(g, T^n(x),x) \mapsto g \cdot S^n(x)$$
    is a groupoid isomorphism from $N \times \cR$ onto $\Gamma \ltimes X$.
\end{proof}

Now we can use Theorem \ref{ergodic kernel dense subgroup} to prove Theorem \ref{ergodic kernel dense subgroup for non free actions}.

\begin{proof}[Proof of Theorem \ref{ergodic kernel dense subgroup for non free actions}]
Let $N=\ker( \theta)$. Thanks to Proposition \ref{abelian groupoid}, we can identify the transformation groupoid $\Gamma \ltimes X$ with $N \times \cR$ where $\cR$ is an ergodic hyperfinite equivalence relation in such a way that $c \in \rZ^1(\theta,\Gamma)$ is identified with a cocycle of the form $\id \times c_0$ for some cocycle $c_0 : \cR \rightarrow \Gamma$. 

Let $H=\overline{N} < G$. Let $\widetilde{c}_0 : \cR \rightarrow G/H$ be the image of $c_0$ by the quotient map $ p : G \rightarrow G/H$. The assumption that $\theta \otimes \rho$ is ergodic means that the cocycle $\widetilde{c}_0$ is ergodic. Observe that $\widetilde{\Lambda}=p(\Lambda)$ is dense in $G/H$. Therefore, by Theorem \ref{ergodic kernel dense subgroup}, we know that $\widetilde{c}_0$ is cohomologous to a cocycle $\widetilde{d}_0 : \cR \rightarrow G/H$ with values in $\widetilde{\Lambda}$ such that the relation $\cS = \ker{\widetilde{d}_0}$ is ergodic. Write $\widetilde{d}_0 = (\partial \widetilde{f}) \cdot \widetilde{c}_0 $ for some $\widetilde{f} : X \rightarrow G/H$. Let $f : X \rightarrow G$ be a measurable lift of $\widetilde{f}$. Consider the cocycle $d_0 : \cR \rightarrow \Lambda$ defined by the formula $d_0 = (\partial f) \cdot c_0$. Observe that the cocycle $d_0|_{\cS}$ takes its values in $H$. Since $N$ is dense in $H$, by applying Lemma \ref{dense subgroup} to the cocycle $d_0|_{\cS}$, we can modify the choice of the lift $f$ to ensure that $d_0|_{\cS}$ takes its values in $N$. By construction, $d_0$ is cohomologous to $c_0$ thus $d = \id \times d_0 : N \times \cR \rightarrow \Lambda$ is cohomologous to $c = \id \times c_0 : N \times \cR \rightarrow \Gamma$ inside $\rZ^1(\theta,G)$. Moreover, $\ker(d)$ contains $(d_0(x,y)^{-1}, x,y)$ for all $(x,y) \in \cS$. Since $\cS$ is ergodic, we conclude that $\ker(d)$ is also ergodic.
\end{proof}

\begin{proof}[Proof of Theorem \ref{cocycle perturbation}]
    We suppose first that $\alpha$ comes from a discrete decomposition. This means that $M=M^\alpha \rtimes_\theta \Gamma$ for some action $\theta : \Gamma \curvearrowright M^\alpha$ where $\Gamma < \widehat{G}$ is a countable dense subgroup, and $\alpha = \widehat{\theta} \circ \iota$ where $\widehat{\theta} : \widehat{\Gamma} \curvearrowright M$ is the dual action of $\theta$ and $\iota : G \rightarrow \widehat{\Gamma}$ is the dual inclusion of $\Gamma < \widehat{G}$.  Write $\mathcal Z(M^\alpha) = \rL^\infty(X, \nu)$, let $\eta : \Gamma \curvearrowright (X, \nu)$ be the restriction of $\theta$ to $\cZ(M^\alpha)$. Let $\rho : \Gamma \curvearrowright (\widehat{G}, m_{\widehat G})$ be the translation action.    We have $M \rtimes_\alpha G=(M^\alpha \ovt \rL^\infty(\widehat{G})) \rtimes_{\theta \otimes \rho} \Gamma$. Since $M \rtimes_\alpha G$ is a factor, we must have $(\cZ(M^\alpha) \ovt \rL^\infty(\widehat{G}))^{\eta \otimes \rho} = \C$. In other words, the diagonal action $\eta \otimes \rho : \Gamma \curvearrowright (X \times \widehat{G}, \nu \otimes m_{\widehat G})$ is ergodic.

  Consider the tautological cocycle $c \in \rZ^1(\eta,\widehat{G})$ defined by $c(\gamma,x)=\gamma$ for every $\gamma \in \Gamma$ and every $x \in X$. By Theorem \ref{ergodic kernel dense subgroup for non free actions}, we can find a cocycle $d \in \rZ^1(\eta,\widehat{G})$ with values in $\Gamma$ such that $d$ is cohomologous to $c$ and the groupoid $\ker d$ acts ergodically on $(X, \nu)$. Let $f : X \rightarrow \widehat{G}$ be a measurable map such that $d=(\partial f) \cdot c$ where $\partial f \in \rB^1(\eta,\widehat{G})$. Define a continuous morphism $v : G \rightarrow \mathcal{U}(\cZ(M^\alpha))$ by the formula $v_g = \langle f, g \rangle$
   for all $g \in G$. Then we have $$\langle (\partial f)(\gamma), g \rangle =v_g \eta_\gamma(v_g^*)$$ for all $\gamma \in \Gamma$ and all $g \in G$.
   
   Now, let $\beta : G \curvearrowright M $ be the action given by $\beta_g= \Ad(v_g) \circ \alpha_g$. We have, for all $\gamma \in \Gamma$ and all $g \in G$, 
   \begin{align*}
 \beta_g(u_\gamma) &=v_g \alpha_g(u_\gamma) v_g^* \\
 &=\langle \gamma,g \rangle v_g  u_\gamma v_g^* \\
 &= \langle \gamma, g \rangle v_g \eta_\gamma(v_g^*) u_\gamma \\
 &= \langle \gamma, g \rangle \langle \partial f(\gamma), g \rangle  u_\gamma.  
   \end{align*}
  Since $d= (\partial f) \cdot c$, we obtain $\beta_g(u_\gamma)= \langle d(\gamma),g \rangle u_\gamma$. 
   
   From this formula, we first see that $\beta$ is $\Gamma$-almost periodic. Secondly, we see that the groupoid subalgebra $\rL(\ker d) \subset \cZ(M^\alpha) \rtimes_{\eta} \Gamma$ is contained in $M^\beta$. Using Lemma \ref{central cocycle equivalence}, we conclude that 
    $$\cZ(M^\beta) \subset \rL(\ker d)' \cap \cZ(M^\alpha)=\C1$$ 
    because the groupoid $\ker d$ is ergodic. In other words, $M^\beta$ is a factor. This settles the case where $\alpha$ comes from a discrete decomposition.

    Now, in the general case we know that $\widetilde{\alpha}=\alpha \otimes \id$ on $\widetilde{M}=M \ovt \B(\ell^2)$ comes from a discrete decomposition. Therefore, we can find a continuous morphism $\widetilde{v} : G \rightarrow \cU(\cZ(\widetilde{M}^{\widetilde{\alpha}}))$ such that $\widetilde{M}^{\widetilde{\beta}}$ is a factor where $\widetilde{\beta}$ is the action given by $\widetilde{\beta}_g=\Ad(\widetilde{v}_g) \circ \widetilde{\alpha}_g$. Now, observe that $\widetilde{M}^{\widetilde{\alpha}}=M^\alpha \ovt \B(\ell^2)$, hence $\cZ(\widetilde{M}^{\widetilde{\alpha}})=\cZ(M^\alpha) \otimes \C1$. Thus, we can write $\widetilde{v}_g = v_g \otimes 1$ for some continuous morphism $v : G \rightarrow \cU(\cZ(M^\alpha))$, and $\widetilde{\beta}_g=\beta_g \otimes \id$ where $\beta_g=\Ad(v_g) \circ \alpha_g$. Finally, since $\widetilde{M}^{\widetilde{\beta}}=M^\beta \ovt \B(\ell^2)$, we conclude that $M^\beta$ is a factor.
\end{proof}

\section{The Rokhlin property for almost periodic actions}

Our goal in this section is to prove Theorem \ref{main almost periodic flow} and more generally, the following version for almost periodic actions of locally compact abelian groups.
\begin{theorem} \label{almost periodic action}
     Let $G$ be a second countable locally compact abelian group. Let $\alpha : G \curvearrowright R$ be an almost periodic action on the hyperfinite $\II_1$ factor such that $R \rtimes_\alpha G$ is a factor and $\alpha$ is outer. Then $\alpha$ has the Rokhlin property.
\end{theorem}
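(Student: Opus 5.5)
First reduce to the prime case. By Theorem \ref{cocycle perturbation}, $\alpha$ is cocycle equivalent to an almost periodic action $\beta_g=\Ad(v_g)\circ\alpha_g$ with $R^\beta$ a factor, where $v : G\to\cU(\cZ(R^\alpha))$ is a continuous morphism. The Rokhlin property is invariant under cocycle conjugacy, and the hypotheses transfer as well. The crossed products are isomorphic, and outerness is preserved since $\beta_g\in\Inn(R)$ if and only if $\alpha_g\in\Inn(R)$. Hence we may assume $R^\alpha$ is a factor. Since $R^\alpha\subset R$ has a faithful normal conditional expectation (average over the compact closure $K$), $R^\alpha$ is a hyperfinite $\II_1$ factor or finite dimensional.

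Next use the structure of prime almost periodic actions on $R$. Let $\Gamma<\widehat G$ be the point spectrum of $\alpha$. For each $\gamma\in\Gamma$, let $R_\gamma$ denote the corresponding spectral subspace. Since $R^\alpha$ is a factor and $\alpha$ is outer, I expect each $R_\gamma$ to contain a unitary $u_\gamma$. Roughly, $R_\gamma^*R_\gamma\subset R^\alpha$ and $R_\gamma R_\gamma^*\subset R^\alpha$, and in a finite factor one polar-decomposes and patches partial isometries; the trace gives equal left and right support. Then $R=R^\alpha\rtimes_{\theta,\mu}\Gamma$ is a twisted crossed product, with $\theta_\gamma=\Ad(u_\gamma)|_{R^\alpha}$, and $\alpha$ is the restriction of the dual action to $G\to\widehat\Gamma$. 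Outerness of $\alpha$ forces $\theta$ to be a properly outer action on $R^\alpha$ (modulo a twist) whenever $R^\alpha$ is infinite dimensional. Factoriality of $R\rtimes_\alpha G$ says that the spectrum $\Gamma$ is dense in $\widehat G$, via the identification $R\rtimes_\alpha G\cong (R^\alpha\ovt\rL^\infty(\widehat G))\rtimes\Gamma$ as in the proof of Theorem \ref{cocycle perturbation}. The finite dimensional case of $R^\alpha$ needs separate handling. There $R$ is generated essentially by a projective representation of $\Gamma$, so it is a noncommutative torus type algebra. I would treat it by the same ultrapower argument below, or by first tensoring off with a copy of $\id_R$ using $R\cong R\ovt R$ and McDuff-type absorption.

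Finally construct Rokhlin unitaries by hand, in the spirit of \cite[Theorem 6.12]{MT12} and \cite[Proposition 2.5]{Ki95}. Fix $p\in\widehat G$. Since $\Gamma$ is dense, choose $\gamma_n\in\Gamma$ with $\gamma_n\to p$ uniformly on compacta. We want unitaries $w_n\in R_{\gamma_n}$ that are asymptotically central in $R$; then $(w_n)^\omega$ lies in $R_{\alpha,\omega}$, and equicontinuity follows from $\alpha_g(w_n)=\langle g,\gamma_n\rangle w_n$ with $\gamma_n\to p$. Centrality is only asymptotic; $\alpha$-equivariance is exact. Write $w_n=a_nu_{\gamma_n}$ with $a_n\in\cU(R^\alpha)$. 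The condition $[w_n,x]\to0$ for $x\in R^\alpha$ means $\Ad(a_n)\circ\theta_{\gamma_n}\approx\id$ on finite subsets of $R^\alpha$. The condition for $x=u_\gamma$ means $a_n\theta_\gamma(a_n^*)\approx$ a prescribed cocycle-type scalar. This is the main obstacle. I would use that $\theta_{\gamma_n}\to$ id in $\Aut(R^\alpha)/\overline{\Inn}$ is not automatic. Instead, use the approximate innerness of every automorphism of the hyperfinite $\II_1$ factor, $\overline{\Inn}(R^\alpha)=\Aut(R^\alpha)$, to pick $a_n$ with $\Ad(a_n)\approx\theta_{\gamma_n}^{-1}$ on a growing finite set. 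Then correct $a_n$ by a central sequence of $R^\alpha$ to handle the commutation with the finitely many $u_\gamma$. This uses a Connes-type Rokhlin lemma or a $1$-cohomology vanishing for the properly outer action $\theta$ on $(R^\alpha)_\omega$ restricted to finitely generated subgroups of $\Gamma$, as in Ocneanu's theory for amenable groups. A diagonal argument over $n$ then produces $u_p=(w_n)^\omega$. By \cite{MT12}, this gives the Rokhlin property.
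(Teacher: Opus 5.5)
Your reduction to the prime case via Theorem~\ref{cocycle perturbation} is the same as the paper's. The construction of the Rokhlin unitaries, however, has a genuine gap, and it sits exactly where you use outerness.

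It is not true that outerness of $\alpha$ forces $\theta$ to be properly outer. The subgroup $H=\{h\in\Gamma \mid \theta_h\in\Inn(R^\alpha)\}$ can be nontrivial while $\alpha$ is outer. For example, take $\Gamma=\Z^2\subset\R$ via $(m,n)\mapsto m+n\sqrt2$, with $\theta_{(1,0)}=\Ad(w)$ inner and $\theta_{(0,1)}$ outer satisfying $\theta_{(0,1)}(w)=e^{2\pi\ri\beta}w$, where $1,\sqrt2,\beta$ are rationally independent.

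When $H\neq\{1\}$, no choice of $a_n$ and no central-sequence correction can rescue an arbitrary sequence $\gamma_n\to p$. Indeed, if $\theta_h=\Ad(w)$ with $w\in\cU(R^\alpha)$, then $z=w^*u_h\in(R^\alpha)'\cap R$. For every unitary $b$ in $R^\alpha$ or in $(R^\alpha)^\omega$,
\[
(bu_\gamma)\,z\,(bu_\gamma)^*=\overline{\langle\varphi(\gamma),h\rangle}\,z,\qquad \langle\varphi(\gamma),h\rangle:=\theta_\gamma(w)w^*\in\T .
\]
A twist would only contribute a further character of $\gamma$. So $a_nu_{\gamma_n}$ can be asymptotically central only if $\varphi(\gamma_n)\to1$ in $\widehat H$, and this is a condition on $\gamma_n$ alone.

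In cohomological terms, $\Gamma$ does not act freely on $(R^\alpha)_\omega$, since each $\theta_h$ with $h\in H$ acts trivially there. Ocneanu's vanishing theorem is therefore only available for $\Gamma/H$, and only once the cocycle is known to vanish on $H$.

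The missing idea is to choose $\gamma_n$ with $(\varphi(\gamma_n),\gamma_n)\to(1,p)$ in $\widehat H\times\widehat G$. This is possible because $\Phi=\{(\varphi(a),a)\mid a\in\Gamma\}$ is dense there, and that density is precisely what outerness of $\alpha$ provides. If $(h,g)\in\Phi^\perp$, then $wu_h^*$ commutes with $R^\alpha$ and $\alpha_g=\Ad(wu_h^*)$, so outerness forces $(h,g)=(1,1)$.

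With such $\gamma_n$, take approximately inner corrections $v=(v_n)^\omega$ and form the cocycle $a\mapsto\theta^\omega_a(v)v^*$. It is trivial on $H$, and Ocneanu's theorem for the free action of $\Gamma/H$ on $(R^\alpha)_\omega$ finishes the argument, as in the paper.

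The same obstruction affects your finite-dimensional case. There $R=R^\alpha\ovt Q$ with $\alpha|_Q$ ergodic, and $Q\cong\rL_\Omega(\Gamma)$ by \cite{OPT79}. Then $u_{\gamma_n}$ is asymptotically central if and only if $\Omega(\gamma_n,\cdot)\to1$. So you again need density of $\{(\varphi(a),a)\}$ in $\widehat\Gamma\times\widehat G$, where $\Omega(a,\cdot)=\langle\varphi(a),\cdot\rangle$, and this follows from outerness. Your alternative route via $\alpha\simeq\alpha\otimes\id_R$ would require an equivariant McDuff absorption theorem that you do not supply.
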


Thanks to Theorem \ref{cocycle perturbation}, it is sufficient to prove Theorem \ref{almost periodic action} under the assumption that $\alpha$ is prime. To deal with this case we combine the ideas of \cite[Proposition 2.5]{Ki95} and \cite[Theorem 6.12]{MT12}.

\begin{theorem}
     Let $G$ be a second countable locally compact abelian group. Let $\alpha : G \curvearrowright R$ be a continuous action on the hyperfinite $\II_1$ factor that is almost periodic, prime and outer. Then $\alpha$ has the Rokhlin property.
\end{theorem}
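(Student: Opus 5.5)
We will exploit the structure of prime almost periodic actions. Since $\alpha$ is almost periodic, $R$ is spanned (in the $\|\cdot\|_2$-sense) by the spectral subspaces $R(p) = \{x \in R \mid \alpha_g(x) = \langle g, p\rangle x \ \forall g\in G\}$, $p \in \widehat G$. The point spectrum $\Gamma_0 = \{p \mid R(p) \neq 0\}$ is a countable subgroup of $\widehat G$, because primeness forces the spectrum to be a group. Since $N = R^\alpha$ is a factor, each nonzero $R(p)$ contains a unitary. Indeed, for $x \in R(p)$ the elements $x^*x$ and $xx^*$ lie in $N$, and by comparison of projections in the $\II_1$ factor $N$ (using $\tau$ and the factoriality of $N$) one finds a partial isometry in $R(p)$ with full support. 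Picking unitaries $u_p \in R(p)$, we get $R = N \rtimes_{\theta, \mu} \Gamma_0$, a twisted crossed product with $\theta_p = \Ad(u_p)|_N$, and $\alpha$ is the restriction to $G$ of the dual action. Outerness of $\alpha$ gives that $\Gamma_0$ is dense in $\widehat G$, since otherwise some $g \neq 0$ would annihilate $\Gamma_0$ and act trivially. The key structural input is \cite{Jo82, OPT79}: $N$ is the hyperfinite $\II_1$ factor and $\theta : \Gamma_0 \curvearrowright N$ is a cocycle action that is \emph{outer} on $N$ modulo inner, i.e.\ $\theta_p \notin \Inn(N)$ for $p \neq 0$. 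This holds because $N' \cap R = \C1$ is forced by primeness plus outerness, or at least the relative commutant is controlled. In fact I would first prove that $N' \cap R = \C 1$. An element of $N'\cap R$ decomposes spectrally into components $x_p \in R(p) \cap N'$, and $x_p u_p^*\in N$ implements $\theta_p^{-1}$ on $N$ up to support. Using the Ocneanu--Jones classification of outer actions of countable amenable groups on $R$, one arranges that $\theta$ is outer modulo a possibly non-outer part. Reducing to the minimal case then lets us invoke \cite[Theorem 6.12]{MT12}. But minimality need not hold in general, and this is where the argument has to be more hands-on.

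The hands-on route, following \cite[Proposition 2.5]{Ki95}, is as follows. Fix $p \in \widehat G$. We want unitaries in $R_{\alpha,\omega}$ with $\alpha^\omega_g(w) = \langle g,p\rangle w$. Choose a sequence $p_n \in \Gamma_0$ with $p_n \to p$, which is possible by density. Since $N \cong R$ and the twisted action $\theta$ restricted to the subgroup generated by $p_n$ is a (cocycle) action of an amenable group, Ocneanu's central sequence technique yields unitaries $a_n \in N$ that almost commute with a given finite set in $R$ (including $u_q$ for $q$ in a finite set $F \subset \Gamma_0$). Set $w_n = a_n u_{p_n} \cdots$. More precisely, we seek $w_n \in R(p_n)$ that asymptotically commute with $N$ and with the $u_q$, $q \in F$. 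Write $w_n = b_n u_{p_n}$ with $b_n \in \cU(N)$. Asymptotic commutation with $x \in N$ means $b_n \theta_{p_n}(x) b_n^* \approx x$, i.e.\ $\theta_{p_n}$ is approximately inner in a central-sequence sense, and commutation with the $u_q$ is a cohomology condition. By Connes' theorem, every automorphism of $R$ is approximately inner, so $b_n$ with $\Ad(b_n)\circ\theta_{p_n}\approx \id$ on a finite set exists. To make $w_n$ also approximately commute with $u_q$, $q\in F$, I would use the abelianness of $\Gamma_0$: the commutator $u_q w_n u_q^* w_n^*$ lies in $N$. I would then apply an Ocneanu-type $1$-cohomology vanishing in $N_\omega$ for the action of $\langle F\rangle$, which is amenable and centrally free on $N_\omega$ thanks to outerness. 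This corrects $b_n$ by a central sequence.

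Finally, since $\alpha_g(w_n) = \langle g, p_n\rangle w_n$ and $p_n \to p$ uniformly on compact subsets of $G$, the sequence $(w_n)$ is $(\alpha,\omega)$-equicontinuous. A diagonal argument over increasing finite sets then gives $w = (w_n)^\omega \in \cU(R_{\alpha,\omega})$ with $\alpha^\omega_g(w) = \langle g,p\rangle w$. Note that $R_\omega = R'\cap R^\omega$ in the trace case. The main obstacle I expect is the cohomology-vanishing step: the $2$-cocycle $\mu$ of the twisted action and the possible non-outerness of $\theta$ on $N$ must be handled. I would first try to show that $\theta$ is centrally free on $N_\omega$ using outerness of $\alpha$ and primeness, and then apply Ocneanu's noncommutative Rokhlin lemma for the amenable group $\Gamma_0$.
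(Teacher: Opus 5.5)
Your skeleton is the right one for the type $\II_1$ case: approximate $p$ by spectral points $p_n$, take $w_n=b_nu_{p_n}$ with $b_n\in\cU(N)$ approximately implementing $\theta_{p_n}^{-1}$, then correct by Ocneanu's $1$-cocycle vanishing in $N_\omega$. However, the correction step fails as stated, and the missing ingredient is exactly where outerness has to enter.

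Your claims that $\theta_p\notin\Inn(N)$ for $p\neq 0$, that $N'\cap R=\C1$, and that $\theta$ acts centrally freely on $N_\omega$ are all false for prime outer actions. For instance, take $R=N\ovt Q$ and $\alpha=\id\otimes\beta$, with $\beta$ an outer ergodic almost periodic action on $Q\cong R$. Then $R^\alpha=N\otimes 1$, every $\theta_p$ is trivial, and $N'\cap R=Q$. The unitaries of $R(p_n)$ are $b\otimes u_{p_n}$, and their commutator with $1\otimes u_q$ is the scalar $u_{p_n}u_qu_{p_n}^*u_q^*$, whatever $b$ is.

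The same obstruction appears in general. Put $H=\{h\in\Gamma_0\mid\theta_h\in\Inn(N)\}$, $\theta_h=\Ad(v_h)$ and $z_h=v_h^*u_h$. Then $N'\cap R(h)=\C z_h$. Every unitary $w\in R(a)$ satisfies $wz_hw^*=\chi(a,h)z_h$ for a scalar $\chi(a,h)\in\T$ that depends only on $a$. Since $z_h$ commutes with $N^\omega$, no correction of $b_n$ by unitaries of $N$ or $N_\omega$ can change this scalar. Consistently, $H$ acts trivially on $N_\omega$, so $\theta^\omega$ is not free there. When $N=\C$ or $\rM_n(\C)$, your correction step is empty, since $N_\omega=\C$. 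Asymptotic centrality therefore forces $\chi(p_n,h)\to 1$ for every $h\in H$. This is a constraint on the sequence $(p_n)$ itself, and an arbitrary sequence $p_n\to p$ does not satisfy it.

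The missing idea is to prove that $\Phi=\{(\chi(a,\cdot),a)\mid a\in\Gamma_0\}$ is dense in $\widehat H\times\widehat G$. Then you can choose $a_n\to p$ with $\chi(a_n,\cdot)\to 1$ pointwise on $H$. By duality it suffices to show that $\Phi^\perp$ is trivial. If $(h,g)\in\Phi^\perp$, then $\alpha_g=\Ad(z_h)$, so outerness forces $g=0$ and $z_h\in\T$, i.e.\ $h=0$. This is the only substantive use of outerness. Your proposal uses it only to get density of $\Gamma_0$, which needs mere faithfulness, even though the Rokhlin property genuinely fails for non-outer $\alpha$.

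With such $a_n$, consider the cocycle $a\mapsto\theta_a^\omega(v)v^*$, where $v=(v_n)^\omega$ approximately implements $\theta_{a_n}$. It is trivial on $H$, so it descends to $\Gamma_0/H$, which does act freely on $N_\omega$, and Ocneanu's theorem then produces the correction. In the ergodic case, the same density statement for the symplectic bicharacter of \cite{OPT79} makes $(u_{a_n})$ itself a Rokhlin sequence. The type $\I$ case reduces to it by splitting off $R^\alpha$. In the type $\II_1$ case, \cite{Jo82} shows that $\alpha$ is a genuine dual action, which removes the $2$-cocycle $\mu$ you left open.
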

\begin{proof}
    Let $\Gamma < \widehat{G}$ be the point spectrum of $\alpha$ and write $\alpha=\kappa \circ \iota$ where $\kappa : \widehat{\Gamma} \curvearrowright R$ is a continuous action and $\iota : G \rightarrow \widehat{\Gamma}$ is the dual inclusion of $\Gamma < \widehat{G}$. Of course, we have $R^\kappa=R^\alpha$, hence $\kappa$ is a prime action. We distinguish three cases.

    \textbf{Case 1: $\alpha$ is ergodic, i.e.\! $R^\alpha=\C$.} Thanks to \cite{OPT79}, we know that $\kappa$ is isomorphic to the dual action on a twisted group von Neumann algebra $\rL_\Omega(\Gamma)$ for some symplectic bicharacter $\Omega : \Gamma \times \Gamma \rightarrow \T$. This means that $R$ is generated by unitaries $(u_g)_{g \in \Gamma}$ such that $\tau(u_a)=0$ for all $a \neq 1$, $u_au_bu_a^*u_b^*=\Omega(a,b)$ for all $a,b \in \Gamma$ and $\kappa_g(u_a)=\langle g, a \rangle u_a$ for all $g \in \widehat \Gamma$ and $a \in \Gamma$.
    
    Let $\varphi : \Gamma \rightarrow \widehat{\Gamma}$ be the unique morphism that satisfies $\Omega(a,b)=\langle \varphi(a),b \rangle$ for all $a,b \in \Gamma$. We claim that the subgroup $\Phi = \{ (\varphi(a),a) \in \widehat{\Gamma} \times \widehat{G} \mid a \in \Gamma \}$ is dense in $\widehat{\Gamma} \times \widehat{G}$. Indeed, by duality, it is enough to show that the orthogonal subgroup $\Phi^\perp < \Gamma \times G$ is trivial. Take $(b,g) \in \Phi^\perp$. This means that for all $a \in \Gamma$, we have $\langle a,g \rangle \langle \varphi(a),b \rangle=1$, or equivalently  $\langle a,g \rangle =\overline{\langle \varphi(a),b \rangle} =\overline{\Omega(a,b)}=\Omega(b,a)$. Therefore $$\alpha_g(u_a)=\langle a,g \rangle u_a = \Omega(b,a) u_a = u_bu_au_b^*$$
    for all $a \in \Gamma$. This shows that $\alpha_g = \Ad(u_b)$. Since $\alpha$ is outer and $R$ is a factor, this is only possible if $g=1$ and $b=1$. We conclude that $\Phi^\perp$ is trivial, hence that $\Phi$ is dense in $\widehat{\Gamma} \times \widehat{G}$ as claimed.

    Now, take $p \in \widehat{G}$ and take a sequence $(a_n)_{n \in \N}$ in $\Gamma$ such that $$\lim_{n \to \infty} (\varphi(a_n),a_n)=(1,p).$$ Then $(u_{a_n})_{n \in \N}$ is a central sequence in $R$ satisfying $$\lim_{n \to \infty} \| \alpha_g(u_{a_n}) - \langle p,g\rangle u_{a_n}\|_2 = 0$$ for all $g \in G$, uniformly on compact subsets of $G$. This shows that $\alpha$ has the Rokhlin property.

    \textbf{Case 2: $R^\alpha$ is of type $\I$.} Let $Q=(R^\alpha)' \cap R$ and let $\alpha|_Q : G \curvearrowright Q$ denote the restricted action. Then $R = R^\alpha \ovt Q$ and $\alpha = \id \otimes \alpha|_Q$. Since $\alpha|_Q$ is almost periodic, ergodic and outer, it has the Rokhlin property by the previous case. Any Rokhlin sequence for $\alpha|_Q$ in $Q$ is automatically asymptotically central in $R = R^\alpha \ovt Q$ and satisfies the same eigenvalue relation, so $\alpha$ inherits the Rokhlin property.
 
    \textbf{Case 3: $R^\alpha$ is of type $\II_1$.} 
    In this case, we know from \cite[§3]{Jo82} that $\kappa$ is a dual action, that is $R=N \rtimes_\theta \Gamma$ for some action $\theta : \Gamma \curvearrowright N=R^\alpha$ and $\kappa= \widehat{\theta} : \widehat{\Gamma} \curvearrowright R$ is the dual action of $\theta$. Let $H=\{ h \in \Gamma \mid \theta_h \text{ is inner} \}$. Let $\varphi : \Gamma \rightarrow \widehat{H}$ be the morphism defined by $\langle \varphi(a), h \rangle =\theta_a(v)v^*$ where $v \in \cU(N)$ is any unitary that satisfies $\Ad(v)=\theta_h$. 
    
    We claim that the subgroup $\Phi = \{ (\varphi(a),a) \in \widehat{H} \times \widehat{G} \mid a \in \Gamma \}$ is dense in $\widehat{H} \times \widehat{G}$. For this, it is enough to show that the orthogonal subgroup $\Phi^\perp < H \times G$ is trivial. Take $(h,g) \in \Phi^\perp$. This means that $\langle a,g \rangle  = \overline{\langle  \varphi(a),h \rangle}$ for all $a \in \Gamma$. Take $v \in \cU(N)$ such that $\Ad(v)=\theta_h$. Let $w=vu_h^* \in \cU(R)$. Then $w \in N' \cap R$ and for every $a \in \Gamma$, we have 
    $$ wu_aw^*=vu_av^*=v\theta_a(v^*)u_ a =\overline{\langle \varphi(a),h \rangle} u_a=\langle a,g \rangle u_a.$$
    This shows that $\alpha_g=\Ad(w)$. Since $\alpha_g$ is outer, we conclude that $g=1$ and $w \in \T$, which means that $h=1$. We conclude that $\Phi^\perp$ is trivial, hence that $\Phi$ is dense in $\widehat{H} \times \widehat{G}$, as claimed. 

    Now, take $p \in \widehat{G}$ and take a sequence $a_n \in \Gamma$ such that $$\lim_{n \to \infty} (\varphi(a_n),a_n)=(1,p).$$ Take  a nonprincipal ultrafilter $\omega$ on $\N$ and let $u=(u_{a_n})^\omega$. Observe that $u \in R^\omega_\alpha$ (equicontinuous part) and that $\alpha^\omega_g(u)=\langle p,g \rangle u$ for all $g \in G$. 
    Since $N$ is hyperfinite, every $\theta_{a_n}$ is approximately inner. Thus, we can find a sequence of unitaries $v_n \in \cU(N)$ such that $\theta_{a_n} \circ \Ad(v_n)^{-1} \to \id$. Let $v=(v_n)^\omega \in N^\omega$. Then $uv^* \in N' \cap R^\omega_\alpha$. Consider the $1$-cocycle $c : \Gamma \rightarrow \cU(N_\omega)$ defined by $c(a)=\theta_a^\omega(v)v^*$. Take $h \in H$ and $w \in \cU(N)$ such that $\theta_h=\Ad(w)$. Since $\varphi(a_n) \to 1$, we know that $uw=wu$. Since $uv^* \in N' \cap R^\omega_\alpha$ also commutes with $w$, then so does $v$. Hence $c(h)=wvw^*v^*=1$. Thus the $1$-cocycle $c : \Gamma \rightarrow \cU(N_\omega)$ is trivial on $H$ and we can view it as a $1$-cocycle from $\Gamma/H \rightarrow \cU(N_\omega)$. Since the action of $\Gamma/H$ on $N_\omega$ is free and liftable, we know by Ocneanu's 1-cocycle vanishing theorem \cite[Proposition 7.2]{Oc85} that there exists $z \in \cU(N_\omega)$ such that $c(a) = \theta_a^{\omega}(z)z^*$ for every $a \in \Gamma$. This means that $\theta_a^\omega(vz^*)=vz^*$ for all $a \in \Gamma$. It then follows that $uv^*z$ commutes with $u_a$ for every $a \in \Gamma$. Since $uv^*z$ also commutes with $N$ by construction, we conclude that $uv^*z \in  R' \cap R^\omega$. We thus obtained a unitary $U=uv^*z \in \mathcal U(R_{\alpha, \omega})$ such that $$\alpha^\omega_g(U)=\alpha^\omega_g(uv^*z)=\alpha_g^\omega(u)v^*z=\langle g,p \rangle uv^*z=\langle g,p \rangle U$$
    for all $g \in G$. This shows that $\alpha$ has the Rokhlin property.
\end{proof}

We conclude this section by observing that for any almost periodic action $\alpha : G \curvearrowright M$ of a locally compact abelian group on an arbitrary factor, $\alpha$ is strictly outer if and only if $\alpha$ is outer and $M \rtimes_\alpha G$ is a factor. More generally, we can describe the relative commutant $M' \cap (M \rtimes_\alpha G)$ when $\alpha$ is not outer.

\begin{theorem}
    Let $M$ be a factor. Let $\alpha : G \curvearrowright M$ be an action of a locally compact abelian group $G$. Suppose that $\alpha$ is almost periodic and let $E^\alpha : M \rightarrow M^\alpha$ be the canonical $\alpha$-invariant faithful normal conditional expectation. Let $H=\{g \in G \mid \alpha_g \in \Inn(M)\}$ and for every $h \in H$, write $\alpha_h=\Ad(v_h)$ for some $v_h \in \cU(M)$. Let $\chi : H \rightarrow \widehat{G}$ be the morphism defined by $\alpha_g(v_h)=\langle g, \chi(h) \rangle v_h$ for all $g \in G$ and $h \in H$.

    Assume that $M \rtimes G$ is a factor. Then the following hold:
    \begin{enumerate}
        \item $\chi$ is injective.
        \item The unitaries $(v_h)_{h \in H}$ are pairwise orthogonal in $M$ with respect to $E^\alpha$. In particular, if $M$ has separable predual, then $H$ is at most countable.
        \item $M' \cap (M \rtimes_\alpha G)$ is generated by the unitaries $(v_h^*u_h)_{h \in H}$. In particular, $\alpha$ is outer if and only if $M' \cap (M \rtimes_\alpha G)=\C1$.
    \end{enumerate}

\end{theorem}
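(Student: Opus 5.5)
The plan is to first check that the objects in the statement are well defined, and then treat (1), (2), (3) in order, using the spectral theory of the compact group $K=\overline{\{\alpha_g\}}$ and the dual action $\widehat\alpha:\widehat G\curvearrowright M\rtimes_\alpha G$.

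\emph{Setup.} Since $G$ is abelian, $\alpha_g$ commutes with $\alpha_h=\Ad(v_h)$. Hence $\alpha_g(v_h)v_h^*\in\cZ(M)=\C1$, and this scalar is a continuous character of $g$, which gives $\chi(h)$. The same computation shows that $\chi$ is a homomorphism and does not depend on the choice of $v_h$. Put $w_h=v_h^*u_h$, where $u_h=\lambda_\alpha(h)$. Then $u_hxu_h^*=\alpha_h(x)=v_hxv_h^*$, so $w_h\in M'\cap(M\rtimes_\alpha G)$. Two commutation rules will be used throughout: the dual action satisfies $\widehat\alpha_p(w_h)=\langle h,p\rangle w_h$, and $u_gw_hu_g^*=\overline{\langle g,\chi(h)\rangle}\,w_h$.

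\emph{Proof of (1).} Suppose $\chi(h)=1$. Then $v_h\in M^\alpha$, so $w_h$ commutes with $M$ and with every $u_g$. Hence $w_h$ is central in the factor $M\rtimes_\alpha G$, i.e. $w_h\in\T$. It follows that $u_h\in M=(M\rtimes_\alpha G)^{\widehat\alpha}$, which forces $\langle h,p\rangle=1$ for all $p$, i.e. $h=e$.

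\emph{Proof of (2).} For $h\neq k$, the unitary $v_h^*v_k$ is an $\alpha$-eigenvector with character $\chi(k)-\chi(h)\neq0$ by (1). Writing $E^\alpha(x)=\int_K\kappa(x)\,\rd\kappa$ shows $E^\alpha(v_h^*v_k)=0$. Fix a faithful normal state $\psi$ on $M^\alpha$. Then the vectors $v_h$ are orthonormal in $\rL^2(M,\psi\circ E^\alpha)$, which is separable, so $H$ is countable.

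\emph{Proof of (3).} The inclusion $\supseteq$ is the setup. For the reverse, I will reduce, as in the proof of Theorem \ref{cocycle perturbation}, to the case where $\alpha$ comes from a discrete decomposition. Tensoring by $\B(\ell^2)$ changes neither $H$, nor the relative commutant (it becomes $(M'\cap(M\rtimes_\alpha G))\otimes1$), nor the unitaries $v_h\otimes1$. So assume $M=N\rtimes_\theta\Gamma$ with $N=M^\alpha$, $\Gamma<\widehat G$ the point spectrum, and $\alpha_g(u_\gamma)=\langle\gamma,g\rangle u_\gamma$. Then
$$M\rtimes_\alpha G\cong(N\ovt\rL^\infty(\widehat G))\rtimes_{\theta\otimes\rho}\Gamma,$$
with $M$ embedded as $N\otimes1$ together with $u_\gamma\mapsto u_\gamma$ (the factor $\rL^\infty(\widehat G)$ being the dual of the $u_g$'s). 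Take $x$ in the relative commutant and expand it as $x=\sum_\gamma x_\gamma u_\gamma$ with $x_\gamma\in N\ovt\rL^\infty(\widehat G)$.
\begin{enumerate}
\item Commuting with $N$ gives $x_\gamma(\theta_\gamma(y)\otimes1)=(y\otimes1)x_\gamma$ for $y\in N$. Slicing in the $\widehat G$-variable, almost every fibre $x_\gamma(p)$ intertwines $\theta_\gamma$ with the identity on $N$.
\item Commuting with each $u_\delta$, $\delta\in\Gamma$, gives the equivariance $(\theta_\delta\otimes\rho_\delta)(x_\gamma)=x_\gamma$, using that $\Gamma$ is abelian.
\item Using polar decomposition and the factoriality of $M\rtimes_\alpha G$, I expect that a nonzero fibre forces $\theta_\gamma$ to be implemented by a unitary, namely $v_h$ up to the identification, for the corresponding $h$. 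The $\rL^\infty(\widehat G)$-dependence of $x_\gamma$ should then be a character of $\widehat G$, i.e. an element $h\in G$ corresponding to $u_h$.
\item This identifies each Fourier piece of $x$ with a scalar multiple of some $w_h$, so $x$ lies in the von Neumann algebra generated by the $w_h$.
\end{enumerate}
The equivalence ``outer $\Leftrightarrow$ trivial relative commutant'' is then immediate: if $\alpha$ is outer then $H=\{e\}$ and the relative commutant is $\C1$; conversely, if the relative commutant is $\C1$ then each $w_h\in\T$, so $u_h\in M$ and $h=e$ as in (1).

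The main obstacle is step 3 of (3): turning the intertwining condition into the statement that the $\rL^\infty(\widehat G)$-component is exactly a character. If this direct route is awkward, I will try instead a spectral argument for the commuting actions $\widehat\alpha$ and $\Ad(u_g)$ on the relative commutant. The dual action restricted there is ergodic, since its fixed points lie in $M'\cap M=\C1$. The aim would be to show that its spectral subspaces are one-dimensional and spanned by the $w_h$.
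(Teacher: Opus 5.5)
\textbf{Verdict: (1) and (2) are fine, but (3) has a genuine gap.} Your setup and your proofs of (1) and (2) are correct and follow the paper. In (1) you even supply the step the paper leaves implicit: $w_h\in\T$ gives $u_h\in(M\rtimes_\alpha G)^{\widehat\alpha}=M$, hence $h=e$. The gap is in (3), which is the real content of the theorem. The inclusion $Q:=M'\cap(M\rtimes_\alpha G)\subset\{w_h\}''$ is never established.

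\textbf{Why your two routes for (3) stall.} Your step 3 is only an expectation, and the fibrewise route it suggests cannot work as stated. $N=M^\alpha$ is in general not a factor, so a nonzero fibre $x_\gamma(p)$ with $x_\gamma(p)\theta_\gamma(y)=yx_\gamma(p)$ is merely a partial intertwiner. It does not make $\theta_\gamma$ inner. Nothing in your outline explains why the $\rL^\infty(\widehat G)$-dependence should be a character. Your fallback uses the wrong action. Ergodicity of $\widehat\alpha|_Q$ only shows that eigenspaces are at most one-dimensional. It produces no eigenvectors at all: $\widehat G$ is typically noncompact, and $\widehat\alpha|_Q$ is not known to be almost periodic. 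Showing that $Q$ is generated by $\widehat\alpha$-eigenvectors is exactly the point at issue.

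\textbf{The missing idea is to use the other action, $\beta_g=\Ad(u_g)|_Q$.}
\begin{enumerate}
\item $\beta$ is almost periodic, because it is $\pi\circ\alpha$ for the continuous homomorphism $\pi:\Aut(M,\alpha)\to\Aut(M\rtimes_\alpha G)$, and $\overline{\alpha(G)}$ is compact in $\Aut(M,\alpha)$.
\item $\beta$ is ergodic, because a $\beta$-fixed element of $Q$ commutes with $M$ and with all $u_g$, so it is central.
\item Hence $Q$ is generated by $\beta$-eigenvectors, and each is a scalar multiple of a unitary $w$. In your discrete-decomposition picture these eigenvectors are exactly the Fourier pieces $x_\gamma u_\gamma$. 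The first two items of your argument already show they lie in $Q$, and $(x_\gamma u_\gamma)^*(x_\gamma u_\gamma)\in Q\cap\rL(G)'=\C$.
\item If $u_gwu_g^*=\langle p,g\rangle w$, then $\Ad(w^*)$ fixes $M$ and sends $u_g\mapsto\langle p,g\rangle u_g$. So $\Ad(w^*)=\widehat\alpha_p$.
\item Every $\widehat\alpha_q$ commutes with this inner automorphism. Therefore $\widehat\alpha_q(w)w^*\in\cZ(M\rtimes_\alpha G)=\C$. This is a continuous character of $\widehat G$, so there is $h\in G$ with $\widehat\alpha_q(w)=\langle h,q\rangle w$.
\item Then $u_hw^*$ is $\widehat\alpha$-invariant, hence equals some $v\in\cU(M)$. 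Since $w=v^*u_h$ commutes with $M$, we get $\alpha_h=\Ad(v)$. So $h\in H$ and $w\in\T w_h$.
\end{enumerate}
This is the paper's proof, and it needs no reduction to a discrete decomposition.
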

\begin{proof}
    (1) Take $h \in \ker(\chi)$. Then $v_h \in M^\alpha$. Since $\alpha_h=\Ad(v_h)$, we know that $v_h^*u_h \in M' \cap (M \rtimes_\alpha G)$. Since $v_h \in M^\alpha$, $v_h^*u_h$ commutes with $u_g$ for every $g \in G$. Thus, $v_h^*u_h$ is in the center of $M \rtimes_\alpha G$, which is trivial by assumption. We conclude that $h=1$.

    (2) Take $h,k \in H$ with $h \neq k$. We have $\alpha_g(v_h^*v_k)=\langle g, \chi(h^{-1}k) \rangle v_h^*v_k$ for every $g \in G$. Hence $E^\alpha(v_h^*v_k)=\langle g, \chi(h^{-1}k) \rangle E^\alpha(v_h^*v_k)$ for all $g \in G$. Since $\chi$ is injective, we know that $\chi(h^{-1}k)$ is a nontrivial character in $\widehat{G}$ and we conclude that $E^\alpha(v_h^*v_k)=0$.

    (3) Let $\Aut(M,\alpha)=\{ \theta \in \Aut(M) \mid \theta \text{ commutes with } \alpha \}$. Observe that we have a natural continuous homomorphism $\pi : \Aut(M,\alpha) \rightarrow \Aut(M \rtimes_\alpha G)$ such that $\pi(\theta)|_M=\theta$ and $\pi(\theta)$ fixes $\rL(G)$ for all $\theta \in \Aut(M,\alpha)$. Moreover, $\pi(\alpha_g)=\Ad(u_g)$ for all $g \in G$. Since $\alpha$ is almost periodic, it follows that the inner action $\pi \circ \alpha$ is also almost periodic. Let $\beta : G \curvearrowright M' \cap (M \rtimes_\alpha G)$ be the restriction of $\pi \circ \alpha$ to $M' \cap (M \rtimes_\alpha G)$. Then $\beta$ is almost periodic. Since $M \rtimes_\alpha G$ is a factor, we also know that $\beta$ is ergodic. Therefore, $M' \cap (M \rtimes_\alpha G)$ is generated by unitaries $w_p$ for some $p \in \widehat{G}$ such that $\alpha_g(w_p)=\langle p, g \rangle w_p$.

    This means that $\Ad(w_p)(u_g)=\langle p, g \rangle u_g$ for all $g \in G$. Hence $\widehat{\alpha}_p=\Ad(w_p)$ where $\widehat{\alpha} : \widehat{G} \curvearrowright M \rtimes_\alpha G$ is the dual action. Since $M \rtimes_\alpha G$ is a factor, there exists a nontrivial $g \in G$ such that $\widehat{\alpha}_q(w_p)=\langle q,g \rangle w_p$ for all $q \in \widehat{G}$. This means that $v_g=u_gw_p^*$ is fixed by $\widehat{\alpha}$, hence $v_g \in M$. Since $w_p=v_g^*u_g$ commutes with $M$, we conclude that $\alpha_g=\Ad(v_g)$, hence $g \in H$. 
\end{proof}

We note that the classification of almost periodic flows $\alpha : G \curvearrowright R$ on the hyperfinite $\II_1$ factor is not complete since the case where $R \rtimes_\alpha G$ is a factor and $\alpha$ is not outer is not covered by our main theorem. This case has no analogue in the classification of amenable type $\III$ factors. We make the following conjecture.

\begin{conjecture}
    Let $\alpha : G \curvearrowright R$ be an almost periodic action on the hyperfinite $\II_1$ factor such that $R \rtimes_\alpha G$ is a factor. Then $\alpha$ is classified up to cocycle conjugacy by the characteristic invariant $(H,\chi)$ where $H=\{h \in G \mid \alpha_h \in \Inn(M) \}$ and $\chi : G \times H \rightarrow \T$ is the bicharacter defined by $\chi(g,h)=\alpha_g(v)v^*$ for every $g \in G$, $h \in H$ and $v \in \cU(R)$ such that $\Ad(v)=\alpha_h$.
\end{conjecture}

\section{Almost periodic factors of type $\III_1$}

The goal of this section is to prove Theorem \ref{main extremal}. Let $M$ be a von Neumann algebra with separable predual and $\varphi, \psi \in \mathcal P(M)$. Following \cite[Chapter ${\rm VIII}$]{Ta03}, we say that $\varphi$ and $\psi$ are {\em commuting} if $\psi = \psi \circ \sigma_t^\varphi$ for every $t \in \R$. This relation is actually symmetric. 

Recall that if $h$ is a nonsingular positive selfadjoint operator affiliated with $M_\varphi$, then we can define a new weight $\varphi(h \, \cdot) \in \cP(M)$. This weight $\varphi(h \, \cdot)$ is characterized by the fact that its Connes--Radon--Nikodym derivative with respect to $\varphi$ is the map $\R \to \cU(M) : t \mapsto h^{\ri t}$ (see \cite[Lemma ${\rm VIII}$.2.8]{Ta03}).

We record the following well-known fact (see \cite[Corollary ${\rm VIII}$.3.6]{Ta03}).

\begin{lemma}\label{lem-commuting}
Let $M$ be a von Neumann algebra with separable predual and $\varphi, \psi \in \mathcal P(M)$. The following assertions are equivalent:
\begin{enumerate}
\item The weights $\varphi$ and $\psi$ are commuting.

\item There exists a nonsingular positive selfadjoint operator $h$ affiliated with $M_\varphi$ such that $\psi = \varphi(h \, \cdot)$.
\end{enumerate}
In that case, we have $M_\varphi \subset M_\psi$ if and only if $h$ is affiliated with $\mathcal Z(M_\varphi)$.
\end{lemma}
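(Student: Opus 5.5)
We prove (1) $\Rightarrow$ (2) by the Connes cocycle derivative, and (2) $\Rightarrow$ (1) by a direct computation of the modular group of $\psi = \varphi(h\,\cdot)$. The final assertion is handled afterwards.

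\textbf{(2) $\Rightarrow$ (1).} Assume $\psi = \varphi(h\,\cdot)$ with $h$ affiliated with $M_\varphi$. Then $(\rD\psi : \rD\varphi)_t = h^{\ri t}$. Hence
$$\sigma^\psi_t = \Ad(h^{\ri t}) \circ \sigma^\varphi_t.$$
Since $h^{\ri t} \in M_\varphi$, Connes' characterization of invariance shows that $\psi \circ \sigma^\varphi_s = \psi$. Concretely, $(\rD(\psi\circ\sigma^\varphi_s) : \rD\varphi)_t = \sigma^\varphi_{-s}\big((\rD\psi:\rD\varphi)_t\big) = h^{\ri t}$. The cocycle derivative theorem then gives $\psi \circ \sigma^\varphi_s = \psi$.

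\textbf{(1) $\Rightarrow$ (2).} Set $u_t = (\rD\psi : \rD\varphi)_t$. The invariance $\psi\circ\sigma^\varphi_s=\psi$ gives
$$\sigma^\varphi_s(u_t) = \big(\rD(\psi\circ\sigma^\varphi_{-s}) : \rD(\varphi\circ\sigma^\varphi_{-s})\big)_t = u_t,$$
so $u_t \in M_\varphi$. The cocycle identity $u_{s+t} = u_s\sigma^\varphi_s(u_t)$ then reduces to $u_{s+t} = u_s u_t$. Thus $(u_t)$ is a strongly continuous one-parameter unitary group in $M_\varphi$. By Stone's theorem, $u_t = h^{\ri t}$ for a nonsingular positive selfadjoint $h$ affiliated with $M_\varphi$. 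Now $\varphi(h\,\cdot)$ has the same cocycle derivative with respect to $\varphi$ as $\psi$, so $\psi = \varphi(h\,\cdot)$.

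\textbf{Final assertion.} The formula $\sigma^\psi_t = \Ad(h^{\ri t})\circ\sigma^\varphi_t$ does the work.

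If $h$ is affiliated with $\cZ(M_\varphi)$ and $x\in M_\varphi$, then $h^{\ri t}$ commutes with $x$. Hence $\sigma^\psi_t(x) = h^{\ri t} x h^{-\ri t} = x$, so $x \in M_\psi$.

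Conversely, suppose $M_\varphi\subset M_\psi$. Then $h^{\ri t}\in M_\varphi\subset M_\psi$. For $x\in M_\varphi$ we have $x = \sigma^\psi_t(x) = h^{\ri t}xh^{-\ri t}$. So each $h^{\ri t}$ commutes with $M_\varphi$, and therefore $h^{\ri t} \in \cZ(M_\varphi)$. Equivalently, $h$ is affiliated with $\cZ(M_\varphi)$.

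The only delicate point is the step in (1) $\Rightarrow$ (2) showing $u_t\in M_\varphi$. This uses the covariance of the Connes cocycle under automorphisms, $(\rD(\psi\circ\alpha):\rD(\varphi\circ\alpha))_t = \alpha^{-1}\big((\rD\psi:\rD\varphi)_t\big)$, applied with $\alpha=\sigma^\varphi_{-s}$ together with $\varphi\circ\sigma^\varphi_{-s}=\varphi$. Everything else is standard; see \cite[Chapter ${\rm VIII}$]{Ta03}.
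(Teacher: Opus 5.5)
Your proof is correct and follows the standard argument behind the paper's reference: the paper gives no proof of its own and simply cites \cite[Corollary ${\rm VIII}$.3.6]{Ta03}, whose proof likewise uses the invariance of the Connes cocycle $(\mathrm{D}\psi:\mathrm{D}\varphi)_t$ under $\sigma^\varphi$ to get a one-parameter unitary group in $M_\varphi$, and then Stone's theorem. The final assertion, which the paper leaves to the reader, follows from $\sigma^\psi_t=\Ad(h^{\ri t})\circ\sigma^\varphi_t$ exactly as you show.
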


We prove the following more precise version of Theorem \ref{main extremal}.

\begin{theorem} \label{precise extremal}
    Let $M$ be a factor of type $\III_1$ with separable predual. Then for every almost periodic weight $\varphi \in \cP(M)$, there exists an extremal almost periodic weight $\psi \in \cP(M)$ such that $\psi$ and $\varphi$ are commuting and $M_\varphi \subset M_\psi$. 
\end{theorem}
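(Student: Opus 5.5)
We apply Theorem \ref{cocycle perturbation} to the modular flow $\alpha = \sigma^\varphi : \R \curvearrowright M$. This flow is almost periodic by assumption, and $M^\alpha = M_\varphi$. Since $M$ is of type $\III_1$, the crossed product $M \rtimes_{\sigma^\varphi} \R$ is the continuous core of $M$, which is a factor; equivalently $\Gamma(\sigma^\varphi) = \R$ by Connes. Theorem \ref{cocycle perturbation} therefore gives a continuous homomorphism $v : \R \to \cU(\cZ(M_\varphi))$ such that $\beta_t = \Ad(v_t)\circ\sigma^\varphi_t$ is almost periodic, $M_\varphi \subset M^\beta$, and $M^\beta$ is a factor.

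Next we realize $\beta$ as a modular flow. By Stone's theorem there is a nonsingular positive selfadjoint operator $h$ affiliated with $\cZ(M_\varphi)$ such that $v_t = h^{\ri t}$ for all $t \in \R$. Set $\psi = \varphi(h\,\cdot)$. By the characterization recalled before Lemma \ref{lem-commuting}, its Connes cocycle is $(D\psi : D\varphi)_t = h^{\ri t}$, so
$$\sigma^\psi_t = \Ad(h^{\ri t})\circ\sigma^\varphi_t = \beta_t, \quad t \in \R.$$
The cocycle identity holds because $h^{\ri t}$ lies in $M_\varphi$ and is fixed by $\sigma^\varphi$, so $t \mapsto h^{\ri t}$ is a genuine homomorphism and a $1$-cocycle for $\sigma^\varphi$. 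Consequently $\psi$ is almost periodic, $M_\psi = M^\beta$ is a factor (so $\psi$ is extremal), and $M_\varphi \subset M_\psi$. Lemma \ref{lem-commuting} then shows that $\psi$ and $\varphi$ commute, since $h$ is affiliated with $M_\varphi$; the inclusion $M_\varphi \subset M_\psi$ is consistent with $h$ being affiliated with $\cZ(M_\varphi)$.

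The main obstacle is the hypothesis of Theorem \ref{cocycle perturbation} that $M$ have separable predual with $\Gamma(\sigma^\varphi) = \R$. The point requiring care is that this holds for a weight rather than a state: factoriality of the core $M \rtimes_{\sigma^\varphi}\R$ does not depend on the choice of $\varphi$, and it is equivalent to $M$ being of type $\III_1$, because the flow of weights (the center of the core) is trivial exactly in the $\III_1$ case. A second, routine point is the identification of $\varphi(h\,\cdot)$ for unbounded $h$ affiliated with the centralizer, which is covered by the cited results in \cite{Ta03}.
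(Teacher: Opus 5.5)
Your proposal is correct and follows essentially the same route as the paper: apply Theorem \ref{cocycle perturbation} to $\sigma^\varphi$, using that the core $M \rtimes_{\sigma^\varphi} \R$ is a factor because $M$ is of type $\III_1$, and then realize the perturbed flow $\beta$ as a modular flow $\sigma^\psi$. The only difference is cosmetic. The paper obtains $\psi$ abstractly from the converse of the Connes--Radon--Nikodym cocycle theorem, whereas you construct it explicitly as $\psi = \varphi(h\,\cdot)$, with $h^{\ri t} = v_t$ given by Stone's theorem; your version makes the commutation of $\psi$ with $\varphi$ an immediate consequence of Lemma \ref{lem-commuting}.
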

\begin{proof}
    Let $\alpha =\sigma^\varphi : \R \curvearrowright M$. Then $M^\alpha=M_\varphi$. Since $M$ is of type $\III_1$, we know that $M \rtimes_\alpha \R \cong c(M)$ is a factor. By Theorem \ref{cocycle perturbation}, we can thus find a one-parameter unitary group $u : \R \to \mathcal U(\cZ(M_\varphi)) : t \mapsto u_t$ such that the action $\beta : t \mapsto \Ad(u_t) \circ \alpha_t$ is almost periodic and $M^\beta$ is a factor. By the Connes--Radon--Nikodym cocycle theorem \cite{Co72}, we know that $\beta=\sigma^\psi$ for some $\psi \in \cP(M)$. Then $\psi$ is almost periodic, it commutes with $\varphi$ and $M_\psi=M^\beta$ is a factor that contains $M_\varphi=M^\alpha$.
\end{proof}

As a consequence of Theorem \ref{precise extremal}, we obtain the following new characterization of extremal almost periodic weights.

\begin{corollary} \label{cor extremal}
 Let $M$ be a type $\III_1$ factor with separable predual and $\varphi \in \mathcal P(M)$ an almost periodic weight. The following assertions are equivalent: 
 \begin{enumerate}
 \item $\varphi$ is extremal.
 \item $M_\varphi \subset M$ is a maximal semifinite von Neumann subalgebra.
 \end{enumerate}
\end{corollary}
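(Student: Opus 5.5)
We prove the two implications separately; the direction (2) $\Rightarrow$ (1) is where Theorem \ref{precise extremal} enters. Throughout, $\alpha = \sigma^\varphi$ and $M_\varphi = M^\alpha$.

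For (2) $\Rightarrow$ (1), assume $M_\varphi \subset M$ is a maximal semifinite von Neumann subalgebra. By Theorem \ref{precise extremal}, there is an extremal almost periodic weight $\psi \in \cP(M)$ that commutes with $\varphi$ and satisfies $M_\varphi \subset M_\psi$. The centralizer $M_\psi$ is semifinite, because $\psi|_{M_\psi}$ is a faithful normal semifinite trace on it. Semifiniteness of $\psi|_{M_\psi}$ needs a brief justification, and almost periodicity supplies it: $\psi$ admits a faithful normal conditional expectation onto $M_\psi$, namely the averaging over the compact group $\overline{\sigma^\psi(\R)}$. Maximality then forces $M_\varphi = M_\psi$, so $M_\varphi$ is a factor and $\varphi$ is extremal.

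For (1) $\Rightarrow$ (2), assume $M_\varphi$ is a factor and let $P$ be a semifinite von Neumann subalgebra with $M_\varphi \subset P \subset M$. We must show $P = M_\varphi$. First, $P' \cap M \subset M_\varphi' \cap M$. By Connes' theory of extremal almost periodic weights (the discrete decomposition $M = M_\varphi \rtimes \Gamma$ with $M_\varphi$ a factor, or by \cite{Co74}), we have $M_\varphi' \cap M = \cZ(M_\varphi) = \C1$, since in the factorial discrete decomposition the action of $\Gamma$ is centrally free. Next, $P$ is invariant under $\sigma^\varphi$: $M$ is spanned by eigenvectors $x$ of $\sigma^\varphi$, each normalizing $M_\varphi$ up to scaling, and $P$ is the closed span of its spectral components $E_\lambda(P)$. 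To justify this, I would show that there is a $\varphi$-preserving conditional expectation onto $P$, using Takesaki's theorem together with the invariance of $P$ under the averaging $E_\lambda$. This invariance is the delicate point.

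An alternative route avoids the delicate point. Since $P$ is semifinite and contains $M_\varphi$, choose a faithful normal semifinite trace $\tau_P$ on $P$. Because $M_\varphi$ is a factor, $\tau_P|_{M_\varphi}$ restricts to a trace on the factor $M_\varphi$. Provided it is semifinite there, it equals $c\,\varphi|_{M_\varphi}$. Then compare through an expectation: the weight $\tau_P \circ E_P$, where $E_P$ exists by the semifiniteness setup, has a centralizer containing $P$. It also commutes with $\varphi$, via Lemma \ref{lem-commuting}, since its Radon--Nikodym derivative is affiliated with $M_\varphi' \cap M = \C1$. Hence it is a scalar multiple of $\varphi$, and so $P \subset M_\varphi$.

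The main obstacle is this last step: producing a faithful normal conditional expectation onto $P$ that is compatible with $\varphi$, and ensuring that the trace restricted to $M_\varphi$ is semifinite. I would attack it by first trying to prove that $P$ is $\sigma^\varphi$-invariant using the relative commutant triviality, and then applying Takesaki's criterion.
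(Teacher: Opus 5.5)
Your proof of $(2)\Rightarrow(1)$ is the paper's argument and is fine. Your remark that $M_\psi$ is semifinite, via the averaging expectation, fills in a detail the paper leaves implicit.

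The gap is in $(1)\Rightarrow(2)$, at exactly the step you flag as delicate. You never prove that an intermediate semifinite algebra $M_\varphi\subset P\subset M$ is globally $\sigma^\varphi$-invariant. Equivalently, you never prove that $P$ is the range of a faithful normal conditional expectation compatible with $\varphi$.

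\begin{itemize}
\item Your spectral-component sketch presupposes $E_\lambda(P)\subset P$, which is the claim itself. Takesaki's theorem needs this invariance as input, so it cannot supply it.
\item The ``alternative route'' does not avoid the problem. Semifiniteness of $P$ yields no normal conditional expectation $E_P : M\to P$; its existence is exactly what is at stake.
\item The assertion that $\tau_P\circ E_P$ commutes with $\varphi$ ``because its Radon--Nikodym derivative is affiliated with $M_\varphi'\cap M$'' is circular. Lemma \ref{lem-commuting} only provides such a derivative once the two weights are already known to commute.
\end{itemize}

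The invariance is not a routine consequence of $M_\varphi'\cap M=\C1$; it is a genuine theorem. The paper obtains it from the Izumi--Longo--Popa Galois correspondence for minimal compact group actions \cite[Theorem 3.15]{ILP96}. Extend $\sigma^\varphi$ to $\alpha : K\curvearrowright M$ with $K=\overline{\sigma^\varphi(\R)}$ compact. Then $(M^\alpha)'\cap M=M_\varphi'\cap M=\C1$ says $\alpha$ is minimal. Hence every intermediate $P$ is the range of a faithful normal conditional expectation (in fact $P=M^H$ for a closed subgroup $H<K$), and so $P$ is $\sigma^\varphi$-invariant.

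With this in hand your endgame works, and more simply than you set it up; in particular you do not need $\tau_P|_{M_\varphi}$ to be semifinite.
\begin{itemize}
\item Since $\varphi|_{M_\varphi}$ is semifinite, so is $\varphi|_P$.
\item Takesaki's theorem then gives $\sigma^{\varphi|_P}=\sigma^\varphi|_P$.
\item Write $\varphi|_P=\tau_P(h\,\cdot)$ with $h$ affiliated with $P$. Then $x=\sigma^\varphi_t(x)=h^{\ri t}xh^{-\ri t}$ for all $x\in M_\varphi$.
\item So $h$ is affiliated with $M_\varphi'\cap M=\C1$. Hence $\sigma^\varphi|_P$ is trivial and $P=M_\varphi$.
\end{itemize}

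A minor correction: $M_\varphi'\cap M=\C1$ does not follow from central freeness, which is neither needed nor the right reason. It holds for a different reason. A nonzero eigenoperator $y$ of $\sigma^\varphi$ in $M_\varphi'\cap M$ has $y^*y,\,yy^*\in\cZ(M_\varphi)=\C1$, so $y$ is a multiple of a unitary $u$. Then $\varphi\circ\Ad(u)$ is $\varphi$ rescaled by the eigenvalue. But $u$ commutes with $M_\varphi$, on which $\varphi$ is semifinite, so the eigenvalue is $1$ and $u\in\cZ(M_\varphi)=\C1$.
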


We can deduce Corollary \ref{cor extremal} from Theorem \ref{precise extremal}.

\begin{proof}[Proof of Corollary \ref{cor extremal}]
We use the equivalence proved in Lemma \ref{lem-commuting}.  

$(1) \Rightarrow (2)$ Suppose that $\varphi \in \mathcal P(M)$ is an extremal almost periodic weight and denote by $\R < K$ a dense embedding into a second countable compact group together with the corresponding continuous extension $\alpha : K \curvearrowright M$ of the modular automorphism group $\sigma^\varphi : \R \curvearrowright M$. Then we have $M_\varphi = M^\alpha$. Let $M_\varphi \subset P \subset M$ be an intermediate semifinite von Neumann subalgebra. Since $(M^\alpha)' \cap M = (M_\varphi)' \cap M = \C 1$, \cite[Theorem 3.15]{ILP96} implies that $P \subset M$ is the range of a faithful normal conditional expectation and thus $P \subset M$ is necessarily globally invariant under $\sigma^\varphi$. Denote by $\tau \in \mathcal P(P)$ a trace on $P$ and by $h$ the unique nonsingular positive selfadjoint operator affiliated with $P$ such that $\varphi|_P = \tau(h \, \cdot)$. Then for every $t \in \R$ and every $x \in M_\varphi \subset P$, we have $x = \sigma_t^\varphi(x) = h^{\ri t} x h^{- \ri t}$. Since $(M_\varphi)' \cap M = \C 1$, we have that $h = \lambda 1$ for $\lambda > 0$ and so $M_\varphi = P$.

$(2) \Rightarrow (1)$ Suppose that $M_\varphi \subset M$ is a maximal semifinite von Neumann subalgebra. By Theorem \ref{precise extremal}, there exists an extremal almost periodic weight $\psi \in \cP(M)$ such that $\psi$ and $\varphi$ are commuting and $M_\varphi \subset M_\psi$. By maximality, we obtain $M_\varphi = M_\psi$. Then we have $(M_\varphi)' \cap M = (M_\psi)' \cap M = \C 1$ and so $\varphi$ is extremal. 
\end{proof}

\bibliographystyle{plain}

\end{document}